\renewcommand{\mathbb}{\mathds}
\DeclareMathAlphabet{\mathsc}{U}{rsfs}{m}{n}
\renewcommand{\mathcal}{\mathsc}
\theoremstyle{definition}
\newtheorem{ntn}{Notation}[section]
\newtheorem{dfn}[ntn]{Definition}
\newtheorem{add}[ntn]{Addendum}
\theoremstyle{plain}
\newtheorem{lem}[ntn]{Lemma}
\newtheorem{prp}[ntn]{Proposition}
\newtheorem{thm}[ntn]{Theorem}
\newtheorem{mth}[ntn]{Theorem}
\newtheorem{cor}[ntn]{Corollary}
\newtheorem{cnj}[ntn]{Conjecture}
\theoremstyle{remark}
\newtheorem{rmk}[ntn]{Remark}
\newtheorem{exa}[ntn]{Example}
\numberwithin{equation}{section}
\newcommand{\A}{\mathcal{A}}
\newcommand{\CC}{\mathbb{C}}
\newcommand{\KK}{\mathbb{K}}
\newcommand{\LL}{\mathbb{L}}
\newcommand{\NN}{\mathbb{N}}
\newcommand{\OO}{\mathcal{O}}
\newcommand{\RR}{\mathbb{R}}
\newcommand{\mm}{\mathfrak{m}}
\newcommand{\ideal}[1]{{\left\langle#1\right\rangle}}
\newcommand{\into}{\hookrightarrow}
\newcommand{\onto}{\twoheadrightarrow}
\newcommand{\p}{\partial}
\newcommand{\rdots}{\mathinner{%
  \mkern1mu\raise1pt\hbox{.}%
  \mkern2mu\raise4pt\hbox{.}%
  \mkern2mu\raise7pt\vbox{\kern7pt\hbox{.}}\mkern1mu}}
\renewcommand{\red}{\mathrm{red}}
\newcommand{\rel}{\mathrm{rel}}
\newcommand{\xymat}{\SelectTips{cm}{}\xymatrix}
\newcommand{\beq}{\begin{equation}}
\newcommand{\eeq}{\end{equation}}
\newcommand{\ben}{\begin{enumerate}}
\newcommand{\een}{\end{enumerate}}
\DeclareMathOperator{\ad}{ad}
\DeclareMathOperator{\codim}{codim}
\DeclareMathOperator{\coker}{coker}
\DeclareMathOperator{\diag}{diag}
\DeclareMathOperator{\Der}{Der}
\DeclareMathOperator{\End}{End}
\DeclareMathOperator{\Fit}{F}
\DeclareMathOperator{\GL}{GL}
\DeclareMathOperator{\grade}{grade}
\DeclareMathOperator{\Hess}{Hess}
\DeclareMathOperator{\Hom}{Hom}
\DeclareMathOperator{\id}{id}
\DeclareMathOperator{\img}{im}
\DeclareMathOperator{\Lie}{Lie}
\DeclareMathOperator{\OG}{O}
\DeclareMathOperator{\Sing}{Sing}
\DeclareMathOperator{\Spec}{Spec}
\begin{document}

\title[Coxeter arrangements and discriminants]
{Partial normalizations of\\Coxeter arrangements and discriminants}

\author{Michel Granger}
\address{
Michel Granger\\
Universit\'e d'Angers, D\'epartement de Math\'ematiques\\
LAREMA, CNRS UMR n\textsuperscript{o}6093\\
2 Bd Lavoisier\\
49045 Angers\\
France
}
\email{granger@univ-angers.fr}
\thanks{}

\author{David Mond}
\address{
D. Mond\\
Mathematics Institute\\
University of Warwick\\
Coventry CV47AL\\
England
}
\email{D.M.Q.Mond@warwick.ac.uk}
\thanks{}

\author{Mathias Schulze}
\address{
M. Schulze\\
Department of Mathematics\\
Oklahoma State University\\
Stillwater, OK 74078\\
United States}
\email{mschulze@math.okstate.edu}
\thanks{}

\date{\today}

\subjclass{20F55, 17B66, 13B22}
\keywords{Coxeter group, logarithmic vector field, free divisor}

\dedicatory{Amended Postprint}

\begin{abstract}
We study natural partial normalization spaces of Coxeter arrangements and discriminants and relate their geometry to representation theory.
The underlying ring structures arise from Dubrovin's Frobenius manifold structure which is lifted (without unit) to the space of the arrangement.
We also describe an independent approach to these structures via duality of maximal Cohen--Macaulay fractional ideals.
In the process, we find 3rd order differential relations for the basic invariants of the Coxeter group.
Finally, we show that our partial normalizations give rise to new free divisors.

\medskip\noindent
Addendum~\ref{120} contains a proof due to H.~Terao of Conjecture~\ref{118} 
and Addendum~\ref{121} points out a simpler proof of Theorem~\ref{68}.
\end{abstract}

\maketitle
\tableofcontents

\section*{Introduction}

V.I.~Arnol'd was the first to identify the singularities of type $ADE$, that is $A_\ell$, $D_\ell$, $E_6$, $E_7$ or $E_8$, as the simple singularities -- those that are adjacent to only finitely many other types. 
He also uncovered the links between the Coxeter groups of type $B_\ell$, $C_\ell$ and $F_4$ and boundary singularities, see \cite{Arn79}. 
His formul\ae\ for generators of the module of logarithmic vector fields $\Der(-\log D)$ along the discriminant $D$ parallels K.~Saito's definition of free divisors.
Along with Brieskorn, Dynkin, Gelfan'd, and Gabriel, Arnol'd revealed the $ADE$ list as one of the central {piazzas} in mathematical heaven, where representation theory, algebra, geometry and topology converge. 
As with so many of Arnol'd's contributions, his work on this topic has given rise to a huge range of further work by others.

Let $f\colon X=(\CC^n,0)\to(\CC,0)=S$ be a complex function singularity of type $ADE$ and let $F\colon X\times B\to S$ be a miniversal deformation of $f$ with base $B=(\CC^\mu,0)$.
Writing $f_u:=F(-,u)$, the discriminant $D\subset B$ is the set of parameter values $u\in B$ such that $f_u^{-1}(0)$ is singular.
It is isomorphic to the discriminant of the Coxeter group $W$ of the same name.
Here the discriminant is the set of exceptional orbits in the orbit space $V/W$.
This is only the most superficial feature of the profound link between singularity theory and the geometry of Coxeter groups which Arnol'd helped to make clear. 

The starting point of this paper is the fact, common to Coxeter groups and singularities, that $D$ is a free divisor (see e.g.~\cite[\S 4.3]{Her02}) with a symmetric Saito matrix $K$ whose cokernel is a ring in the singularity case.
By definition the Saito matrix $K$ is the $\mu\times\mu$-matrix whose columns are the coefficient vectors of a basis of $\Der(-\log D)$ with respect to a basis of the module $\Der_B:=\Der_\CC(\OO_B)$ of vector fields on $B$.

On the singularity theory side these two roles are well known.
Let $h$ be a defining equation for $D$ and $J_D$ the Jacobian ideal of $D$. 
Then $K$ appears in the exact sequence 
\[
\xymat{0\ar[r]&\OO_B^\mu\ar[r]^-K& \Der_{B}\ar[r]^-{dh}&J_D\ar[r]&0}
\]
which defines $\Der(-\log D)$ as the vector fields which preserve the ideal of $D$. 

Let $\Sigma\subset X\times B$ be the relative critical locus, defined by the Jacobian ideal $J^\rel_F$ of $F$ relative to $B$, and let $\Sigma^0:=\Sigma\cap V(F)$. 
Let $\pi:\Sigma\to B$ denote the restriction of the projection $X\times B\to B$,  so that $D=\pi(\Sigma^0)$. 
Then $K$ also appears in the exact sequence
\beq\label{113}
\xymat{0\ar[r]&\OO_B^\mu\ar[r]^-K& \Der_{B}\ar[r]^-{dF}&\pi_*\OO_{\Sigma^0}\ar[r]&0}
\eeq
in which $dF$ maps a vector field $\eta\in\Der_B$ to the function $dF(\tilde{\eta})$ on $\Sigma^0$, where $\tilde\eta$ is a lift of $\eta$ to $X\times B$. 
As $\pi_*\OO_\Sigma$ is free over $\OO_B$ of rank $\mu$, we can make the identifications 
\[
\pi_*\OO_\Sigma\cong\OO_B^\mu\cong\Der_B,
\]
and reinterpret $K$ as the matrix of the $\OO_B$-linear operator induced  on $\pi_*\OO_\Sigma$ by multiplication by $F$, whose cokernel is also, evidently, $\pi_*\OO_{\Sigma^0}$.

Similar to the case of $ADE$ singularities and corresponding Coxeter groups, Coxeter groups of type $B_k$ and $F_4$ are linked with boundary singularities, for which a similar argument shows that the cokernel of $K$ is naturally a ring.
Also for these and the remaining Coxeter groups $I_2(k)$, $H_3$ and $H_4$, the cokernel of $K$ carries a natural ring structure. 
The simplest way to see this involves the Frobenius structure constructed on the orbit space by Dubrovin in \cite{Dub98}, following K.~Saito. 
Here the key ingredient is a fiber-wise multiplication on the tangent bundle, which coincides with the multiplication coming from $\OO_\Sigma$ in the $ADE$ singularity case. 
We recall the necessary details of Dubrovin's construction, following C.~Hertling's account in \cite{Her02}, in Section~\ref{190}, in preparation for the proof of our main result.
This states that also the cokernel of a transposed Saito matrix for the reflection arrangement of a Coxeter group carries a natural ring structure.

\begin{mth}\label{106}\
\ben

\item\label{106a} Let $\A$ be the reflection arrangement of a Coxeter group $W$ acting on the vector space $V\cong\CC^\ell$, let $p_1,\dots, p_\ell$ be generators of the ring of $W$-invariant polynomials, homogeneous in each irreducible component of $V$, and let $J$ be the Jacobian matrix of the map $(p_1,\dots, p_\ell)$ (which is in fact a transposed Saito matrix for $\A$).
Then $\coker J$ has a natural structure of $\CC[V]$-algebra.

\item\label{106b} Denoting $\Spec\coker J$ by $\tilde\A$, we have
\ben
\item\label{106bi} $\tilde\A$ is finite and birational over $\A$ (and thus lies between $\A$ and its normalization). 
\item\label{106bii} For $x\in\A$, let $W_x$ be the stabilizer of $x$ in $W$, let $X(x)$ be the flat of $\A$ containing $x$, and let $\{\A_{x,i}\mid i\in I_x\}$ be the set of reflection arrangements of the irreducible summands in the representation of $W_x$ on $V/X(x)$.
Then, locally along $X(x)$, $\tilde\A$ can be identified with the disjoint union $\coprod_{i\in I_x}\tilde\A_{x,i}\times X(x)$ of connected spaces.
In particular, the geometric fiber of $\tilde\A\to\A$ over $x$ is indexed by $I_x$. 
\item\label{106biii} Under the bijection of \eqref{106bii}, smooth points of $\tilde\A$ correspond to representations of type $A_1$.  
\een

\een
\end{mth}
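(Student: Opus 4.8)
The plan is to establish part~\eqref{106a} by transporting the ring structure from Dubrovin's Frobenius manifold on the orbit space $V/W$ up to $V$ along the quotient map $\phi\colon V\to V/W$, $\phi=(p_1,\dots,p_\ell)$. Recall from Section~\ref{190} that the tangent bundle of $V/W$ carries a commutative associative multiplication $\circ$ with a (possibly non-global) unit; identifying $\Der_{V/W}$ with $\OO_{V/W}^\ell$ via the flat coordinates and pulling back via $\phi$, the Jacobian matrix $J$ is precisely the matrix of the $\OO_V$-linear map $\phi^*\colon \phi^*\Der_{V/W}\to\Der_V$. Thus $\coker J$ is canonically $\coker(\phi^*)$, and one checks that the image of $\phi^*$ is an ideal for the pulled-back multiplication $\phi^*\circ$ on $\phi^*\Der_{V/W}$, so that the quotient $\coker J$ inherits a well-defined $\CC[V]$-algebra structure. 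The main point to verify here is that $\img\phi^*$ is stable under multiplication by arbitrary sections, which amounts to a statement about the Euler field and the structure constants of $\circ$ that can be read off from the homogeneity properties of the $p_i$; one also needs that the unit (where defined) does not lie in $\img\phi^*$, so that the algebra is nontrivial.

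For \eqref{106bi}, birationality and finiteness: finiteness of $\tilde\A=\Spec\coker J$ over $\A$ follows because $\coker J$ is a finitely generated $\OO_\A$-module (indeed a fractional ideal, as in the MCM/duality description promised in the introduction), and birationality follows because over the complement of the singular locus of $\A$ — i.e.\ over the smooth points of the arrangement — the matrix $J$ drops rank by exactly one, so $\coker J$ is locally free of rank one there and agrees with $\OO_\A$ generically. Hence $\tilde\A$ lies between $\A$ and its normalization, since it is a finite birational extension and the normalization is the largest such.

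For \eqref{106bii}, the geometric fiber over $x\in\A$: restrict the exact sequence defining $\coker J$ to the point $x$ and take the tensor product with the residue field. The cokernel of $J(x)$, as a vector space, is the obstruction to lifting tangent vectors at $\phi(x)$, and its dimension should equal the number of irreducible $W_x$-summands of $V/X$ where $X$ is the flat through $x$. The natural way to see this is to use the local product decomposition of the arrangement near $x$: étale-locally $\A$ near $x$ looks like (flat $X$) $\times$ (the arrangement of the parabolic subgroup $W_x$ acting on $V/X$ at its origin), and Dubrovin's construction is compatible with products of Coxeter groups, so the computation reduces to the case where $x$ is the origin of the arrangement of $W_x$ on $V/X$. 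There the fiber of $\coker J$ over the origin is the cokernel of $J(0)$, and since the $p_i$ have no linear terms in the relevant variables, $J(0)=0$ on the block corresponding to $V/X$; the rank of the fiber over $0$ is then the number of irreducible factors, because $\coker J$ is, via Dubrovin's multiplication, a product of local rings indexed by the irreducible components (each contributing one copy of the unit to the fiber). The hard part will be making the identification of the fiber with the set of irreducible summands canonical rather than just a dimension count: this requires unwinding the algebra structure carefully and matching idempotents of $\coker J\otimes\CC$ with the projectors onto the irreducible $W_x$-summands of $V/X$.

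Finally, \eqref{106biii} is a corollary of \eqref{106bii}: a point $\tilde x\in\tilde\A$ over $x$ is smooth precisely when $\tilde\A\to\A$ is a local isomorphism there and $\A$ itself is smooth at $x$ — but $\A$ is smooth at $x$ only if the flat $X$ is a hyperplane, equivalently $V/X$ is one-dimensional and $W_x$ acts on it as the reflection group $A_1$; conversely if the summand corresponding to $\tilde x$ is of type $A_1$ then locally that block of $\A$ is a smooth hyperplane and $\coker J$ is locally free of rank one there, so $\tilde x$ is a smooth point. I expect step~\eqref{106bii} — and within it the compatibility of Dubrovin's Frobenius structure with the local product decomposition at $x$ — to be the main obstacle, since it is the one place where the geometry of the arrangement, the representation theory of parabolic subgroups, and the analytic structure of the Frobenius manifold all have to be brought into alignment.
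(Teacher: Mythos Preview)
Your overall architecture for part~\eqref{106a} matches the paper's --- use Dubrovin's Frobenius multiplication and show an image is an ideal --- but the details are garbled in a way that would not survive a careful write-up. The Jacobian $J$ is the matrix of the tangent map $tp\colon\Der_V\to p^*\Der_{V/W}$ (equation~\eqref{197}), not of a map $\phi^*\Der_{V/W}\to\Der_V$; no natural map in that direction exists. Consequently $\coker J$ is a quotient of $p^*\Der_{V/W}=\Der_R\otimes_RS$, which already carries the pulled-back Frobenius multiplication, and the issue is whether $\img(tp)$ is an ideal there. You write this correctly in one sentence and contradict it in the previous one. More substantively, your proposed verification (``read off from the homogeneity properties of the $p_i$'') will not work: homogeneity alone does not force $\img(tp)$ to be closed under the Frobenius product. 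The paper's argument (Proposition~\ref{200}) instead \emph{lifts} the multiplication to $\Der_V$ (without unit), proves $tp$ is $\Der_R$-linear, and checks both facts by an explicit local computation in canonical coordinates at a generic point of a reflecting hyperplane, where $p$ is a standard fold. That local normal form, not homogeneity, is the engine.

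Your sketch for~\eqref{106bi} and~\eqref{106bii} is close to the paper's (Propositions~\ref{21} and~\ref{43}, Lemma~\ref{23}): localize along the flat, reduce to the arrangement of $W_x$ at its origin, and for irreducible $W$ show the fiber over $0$ is a single geometric point because the generators $h_i$ are positively graded. Your phrase ``$J(0)=0$ so the fiber has rank $\ell$'' gives only the vector-space dimension of the fiber, not the number of geometric points; you need the grading argument to see the fiber algebra is local.

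There is a genuine error in~\eqref{106biii}. You claim a point $\tilde x\in\tilde\A$ is smooth only when $\tilde\A\to\A$ is a local isomorphism and $\A$ is smooth at $x$. This is false: at a normal-crossing point of type $A_1\times A_1$, $\A$ is singular but $\tilde\A$ is the disjoint union of two smooth hyperplanes (Example~\ref{46}), and both points of the fiber are smooth. The correct argument uses the local decomposition of Theorem~\ref{61}: the branch of $\tilde\A$ through $\tilde x$ corresponding to an irreducible factor $W_i$ is, up to a smooth factor, $\tilde\A_i$ at its cone point; this is smooth if and only if $W_i=A_1$, since for any other irreducible type the fiber over $0$ is a single point through which at least two hyperplane branches pass.
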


\begin{exa}\label{108}\
\begin{asparaenum}

\item\label{108a} In the case of $A_2$, the arrangement $\A$ consists of three concurrent coplanar lines. 
In this case  $\tilde\A$ is isomorphic to the union $L_2$ of the three coordinate axes in $3$-space. 
One can check this rather easily: $L_2$ is the only connected curve singularity mapping finitely and birationally to $\A$, but which is not isomorphic to it.  
More generally, in the case of $A_\ell$, with $\begin{pmatrix}\ell+1\\2\end{pmatrix}$ reflecting  hyperplanes, $\tilde\A$ is isomorphic to the codimension-$2$ subspace arrangement $L_\ell$ in $(\ell+1)$-space consisting of the $(\ell-1)$-planes $L_{i,j}:=\{x_i=x_j=0\}$ for $1\leq i<j\leq \ell+1$. 
The projection $x\mapsto x-x^\sharp$, where $x^\sharp$ is $x$ averaged by the action of the symmetric group $S_\ell$ permuting coordinates, gives an $S_\ell$-equivariant map of $L_{\ell}$ to the standard arrangement $\A\subset\{\sum_{i=1}^{\ell+1}x_i=0\}$, sending $L_{i,j}$ isomorphically to $\{x_i=x_j\}$. 
We return to this example, and prove these assertions, in Subsection~\ref{109}. 

\item\label{108b} Figure~\ref{107} shows a $2$-dimensional section of the hyperplane arrangement $\A$ for $A_3$, on the left, and, on the right, a topologically accurate view of the preimage of this section in $\tilde \A$.

\begin{figure}[ht]
\caption{$\A$ and $\tilde\A$ for the Coxeter group $A_3$}\label{107}
\begin{center}
\begin{pspicture}(4,3.6)
\psset{linewidth=0.1mm,arrows=cc-cc}
\psline(2,0)(2,3.6)
\psline(0.08,0.2)(3.2,2)
\psline(3.92,0.2)(0.8,2)
\psline(0,0.4)(4,0.4)
\psline(0.2,0)(2.2,3.6)
\psline(3.8,0)(1.8,3.6)
\rput[t]{-30}(2.6,0.92){\tiny$x_2=x_3$}
\rput[b]{30}(1.38,1){\tiny$x_2=x_4$}
\rput[b]{-90}(2.04,2){\tiny$x_3=x_4$}
\rput[t]{0}(1.4,0.36){\tiny$x_1=x_2$}
\rput[b]{60}(1.48,2.4){\tiny$x_1=x_4$}
\rput[b]{-60}(3.18,1.2){\tiny$x_1=x_3$}
\rput[b]{0}(0.6,3){$\A$}
\end{pspicture}
\begin{pspicture}(4,3.6)
\psset{linewidth=0.1mm,arrows=cc-cc}
\psline(2,0)(2,3.6)
\psline(0.08,0.2)(3.2,2)
\psline(3.92,0.2)(0.8,2)
\psline[border=0.5mm](0,0.4)(4,0.4)
\psline[border=0.5mm](0.2,0)(2.2,3.6)
\psline[border=0.5mm](3.8,0)(1.8,3.6)
\psline(0,0.4)(4,0.4)
\psline(0.2,0)(2.2,3.6)
\psline(3.8,0)(1.8,3.6)
\psline(2,1.31)(2,3.6)
\psline(2,1.31)(0.08,0.2)
\psline(2,1.31)(3.92,0.2)
\rput[b]{0}(0.6,3){$\tilde\A$}
\end{pspicture}
\end{center}
\end{figure}

The planes $\{x_{i_1}=x_{i_2}\}$ and $\{x_{i_3}=x_{i_4}\}$ meet orthogonally if $i_1$, $i_2$, $i_3$ and $i_4$ are all different, and the reflections in these planes commute; it follows that at a point $x$ in the stratum $\{x_{i_1}=x_{i_2}\neq x_{i_3}=x_{i_4}\}$, the representation is of type $A_1\oplus A_1$ and by \eqref{106bii} of Theorem~\ref{106} above, the fiber of $\tilde\A$ over $x$ consists of two points.
In each of these pictures there are four nodes of valency three. 
In the left hand picture, each lies in a $1$-dimensional stratum in $\tilde\A$ where the local representation is of type $A_2$, so that locally $\A$ consists of three planes in $3$-space, meeting along a common line. 
The preimage of this stratum in $\tilde\A$ is a line, along which $\tilde\A$ is locally isomorphic to the union of the three planes $\ideal{e_1,e_4}, \ideal{e_2,e_4}$ and $\ideal{e_3,e_4}$ in $4$-space.

\end{asparaenum}

It would be interesting to find explicit embeddings of the space $\tilde\A$ in the remaining cases. 

\end{exa}

To prove the theorem, beginning with the multiplicative structure on $\Der_B$ and $\coker(K)$ coming from Dubrovin's Frobenius structure, we endow both $\Der_V$ and $\coker J$ with a multiplication, and $\Der_V$ with a $\Der_B$-module structure, whose crucial feature is that the derivative $tp\colon\Der_V\to\Der_B\otimes_{\OO_B}\OO_V$ of $p$ is $\Der_B$-linear.
On $\Der_V$, but not on $\coker J$, this multiplication lacks a neutral element. 

Nevertheless, the first evidence for the theorem was found by an entirely different route not involving Dubrovin's Frobenius structure.
This was based on the fact that the cokernel of the linear map $\xymatrix{S^\ell\ar[r]^\Lambda& S^\ell}$ defined by a square matrix $\Lambda$ has a natural $S$-algebra structure if and only if the so-called {\em rank condition} (rc) holds.
This is a purely algebraic condition on the adjugate matrix of $\Lambda$, which can be checked by explicit calculation. 
We explain this in general in Section~\ref{97}.

In Section~\ref{230}, we then specialize to the case where $\Lambda$ is the Jacobian matrix $J$ of the basic invariants of a Coxeter group $\A$,  or the Saito matrix of the discriminant $D$ of a Coxeter group.
The space $\tilde D=\Spec\coker K$ is normal (indeed smooth) exactly in the $ADE$-case; on the other hand $\tilde\A=\Spec\coker J$ is normal only in the case of $A_1$. 
We discuss the geometry of these two spaces, and their link with the representation theory. 
In particular we compare them with the normalizations of $D$ and $\A$ in Subsection~\ref{49}.  

In Section~\ref{7}, our earlier approach to the main theorem lead to an interesting problem on Coxeter groups.
The algebra of the fiber over $0$ of the projection $p\colon V\to V/W$ carries two structures: that of a zero-dimensional Gorenstein algebra and that of the regular $W$-representation.
It is not clear how these two structures are related: 
which irreducible components of the same $W$-isomorphism type admit an isomorphism induced by the algebra structure? 
The following consequence of Theorem~\ref{106}, whose proof is completed by Proposition~\ref{119}, answers this question in a special case. 

\begin{cor}\label{110}
Let $W$ be an irreducible Coxeter group in $\GL(V)$ with homogeneous basic invariants $p_1,\dots,p_\ell$, ordered by increasing degree, and let $F$ be the ideal in $\CC[V]$ generated by $p_1,\dots,p_\ell$.
Then for each $j=1,\dots,\ell$, there exists an $\ell\times\ell$-matrix $A_j$ with entries in $\CC[V]$ such that 
\[
\left(\frac{\p p_\ell}{\p x_1}, \dots, \frac{\p p_\ell}{\p x_\ell}\right)=\left(\frac{\p p_j}{\p x_1}, \dots, \frac{\p p_j}{\p x_\ell}\right) A_j\mod F\cdot(\CC[V])^\ell.
\]
\end{cor}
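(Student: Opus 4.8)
The plan is to read the Corollary as an ideal inclusion in the coinvariant algebra and then extract that inclusion from the ring structure of Theorem~\ref{106}. Entrywise, the displayed identity says that every $\p p_\ell/\p x_m$ lies in $I_j+F$, where $I_i:=(\p p_i/\p x_1,\dots,\p p_i/\p x_\ell)$ and the matrix $A_j$ merely records the resulting coefficients; so the statement is equivalent to $I_\ell\subseteq I_j+F$ in $\CC[V]$ for each $j$. Since each $\p p_\ell/\p x_m$ is homogeneous of degree $h-1$, with $h=d_\ell$ the Coxeter number, while $\det J$ is homogeneous of degree $N=\sum_i(d_i-1)$ and $N\ge h>h-1$ once $\ell\ge2$ (for $\ell=1$ the Corollary is vacuous), the ideals $I_j+F$ and $I_j+F+(\det J)$ coincide in degree $h-1$; hence it is equivalent, and suffices, to prove $\overline{I_\ell}\subseteq\overline{I_j}$ inside $\bar A:=\CC[V]/(F+(\det J))$.

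Now I would invoke Theorem~\ref{106}\eqref{106a}: base-changing the presentation $\CC[V]^\ell\xrightarrow{J}\CC[V]^\ell\to\coker J\to0$ along $\CC[V]\onto A:=\CC[V]/F$ --- the coinvariant algebra, Artinian Gorenstein with one-dimensional socle in degree $N$ spanned by $\overline{\det J}$ --- exhibits $\coker\bar J$, where $\bar J:=J\bmod F$ (equivalently $\coker J\otimes_{\CC[V]}A$), as an $A$-algebra; and since $J\cdot J^{\mathrm{adj}}=(\det J)I$, the element $\overline{\det J}$ kills it, so it is in fact a $\bar A$-algebra. The first point to settle is its unit: tracing Dubrovin's construction, the unit vector field of the Frobenius manifold $V/W$ is $\partial/\partial t$ for the flat coordinate $t$ of top weight $d_\ell$, so after passage to the basic invariants the unit of $\coker J$, hence of $\coker\bar J$, is the class $\bar e_\ell$ attached to $p_\ell$. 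The defining relations --- vanishing of the columns of $\bar J$ --- then give, after isolating the $\ell$-th term and using that $\bar e_\ell$ is the unit, for each $m$,
\[
\overline{\p p_\ell/\p x_m}=-\sum_{i=1}^{\ell-1}\overline{\p p_i/\p x_m}\,\bar e_i\qquad\text{in }\coker\bar J .
\]

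It remains to upgrade this identity, together with the multiplicative structure of $\coker\bar J$ --- whose structure constants are constants on the central fibre, being those of the graded Frobenius algebra of $V/W$ at the origin --- into the inclusion $\overline{I_\ell}\subseteq\overline{I_j}$ in $\bar A$. This is Proposition~\ref{119}, and I expect it to be the main obstacle. It is precisely the place where the rank-condition analysis of Section~\ref{7} --- or, equivalently, the description of $\coker J$ through maximal Cohen--Macaulay fractional ideals --- does real work, and it reduces to a statement about how the Gorenstein algebra structure on $A$ meshes with its decomposition as the regular $W$-representation: one must show that the (unique, multiplicity-one) copy of the reflection representation occurring in $A$ in its top degree $h-1$, spanned by the $\overline{\p p_\ell/\p x_m}$, lies in $A_{h-d_j}$ times a copy of the reflection representation occurring in degree $d_j-1$, spanned by the $\overline{\p p_j/\p x_m}$. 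The extremes $j=\ell$ (trivial) and $j=1$ (immediate, since non-degeneracy of the quadratic invariant $p_1$ gives $I_1=\CC[V]_+$, which already contains every $\p p_\ell/\p x_m$) cost nothing; all the content lies in the range $1<j<\ell$.
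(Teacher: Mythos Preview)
Your reduction of the Corollary to the dual rank condition (drc) --- that $J^\ell\subset S\cdot J^j+F$ for each $j$ --- is correct, and you rightly identify Proposition~\ref{119} (equivalently, the machinery of Section~\ref{7}) as the step that converts Theorem~\ref{106} into this statement. That is exactly the paper's route, and your representation-theoretic paraphrase of what must be shown is accurate.

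The displayed relation in $\coker\bar J$, however, is a detour that does not feed into Proposition~\ref{119}. It is nothing but the $m$-th column of $\bar J$ rewritten after naming the unit; it uses the ring structure only to identify $e_\ell$, it expresses the image of $\partial p_\ell/\partial x_m$ as a combination running over \emph{all} $i<\ell$ rather than a single fixed $j$, and it lives in $\coker\bar J$ rather than in $\bar A$ (the structure map $\bar A\to\coker\bar J$ is not injective). The paper's actual bridge from the ring structure to (drc) runs entirely on the \emph{minor} side: the algebra structure on $\coker J$ forces the rank condition (rc) on the adjugate of $J^t$ by Theorem~\ref{162}; Solomon's linear-independence theorem for the minors then pins down the distinguished row as the $\ell$-th one, yielding the graded rank condition (grc) (Lemma~\ref{161}); and only then does the Gorenstein pairing on the coinvariant algebra --- precisely the mechanism you describe in your final paragraph --- dualize (grc), a statement about the $W$-modules $M^j$ of minors, into (drc), a statement about the $W$-modules $J^j$ of partials (Lemmas~\ref{173} and~\ref{179}, which together make up Proposition~\ref{119}). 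So the hypothesis Proposition~\ref{119} consumes is (grc), not your column relation; replace that middle paragraph with the chain ring$\Rightarrow$(rc)$\Rightarrow$(grc) and your outline becomes complete.
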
 

In all cases except for $E_6$, $E_7$ and $E_8$, we give an explicit formula for the matrices $A_j$ in Corollary~\ref{110}: they are Hessians of basic invariants.
This statement is a $3$rd order partial differential condition on the basic invariants which we call the \emph{Hessian rank condition} (Hrc).
Besides the missing proof for the $E$-types, which would lead to a self contained algebraic proof of Theorem~\ref{106}, it would be interesting to know whether (Hrc) is a new condition or can be explained in the framework of Frobenius manifolds.

In our final Section~\ref{96}, we show that by adding to $D$ a divisor which pulls back to the conductor of the ring extension $\OO_D\to\coker K$, we obtain a new free divisor (Theorem~\ref{68}). 
This was already shown on the singularity side in \cite{MS10}. 
The preimage in $V$ of this free divisor is a free divisor containing the reflection arrangement (Corollary~\ref{55}). 

\subsection*{Acknowledgments}

We thank the ``Mathematisches Forschungsinstitut Oberwolfach'' for two two-week ``Research in Pairs'' stays in 2010 and 2011. 
The authors are grateful to the referee for forceful, detailed and helpful comments on an earlier version.

\section{Review of Coxeter groups}\label{80}

For more details on the material reviewed in this section, we refer to the book of Humphreys~\cite{Hum90}.
Let $V_\RR$ be an $\ell$-dimensional $\RR$-vector space and let $V=V_\RR\otimes_\RR\CC$.
Consider a finite group $W\subset\GL(V)$ generated by reflections defined over $\RR$.
Any such representation $W$ decomposes into a direct sum of irreducible representations, and $W$ is irreducible if and only if the corresponding root system is.
The irreducible isomorphism types are $A_\ell$, $B_\ell$, $D_\ell$, $E_6$, $E_7$, $E_8$, $F_4$, $G_2=I_2(6)$, $H_3$, $H_4$, and $I_2(k)$.

The group $W$ acts naturally on the symmetric algebra $S:=\CC[V]$ by the contragredient action, and we denote by $R:=S^W$ the corresponding graded ring of invariants.
By a choice of linear basis, we identify $S$ with $\CC[x_1,\dots,x_\ell]$.
The natural inclusion $R\subset S$ turns $S$ into a finite $R$-module of rank $\#W$. 
The averaging operator
\begin{equation}\label{17}
\#\colon S\to R,\quad g\mapsto g^\#:=\frac{1}{\#W}\sum_{w\in W}g^w
\end{equation}
defines a section of this inclusion.

By Chevalley's theorem (\cite[Thm. 3.5]{Hum90}), $R$ is a polynomial algebra $R=\CC[p_1,\dots,p_\ell]$ where
$p_1,\dots,p_\ell$ are homogeneous $W$-invariant polynomials in $S$.
We set 
\begin{equation}\label{84}
\deg p_i=m_i+1=w_i
\end{equation}
and assume that $m_1\le\cdots\le m_\ell$.
Then the \emph{degrees} $w_i$, or the \emph{exponents} $m_i$, are uniquely determined and 
\begin{equation}\label{1}
\sum_{i=1}^\ell m_i=\#\A
\end{equation}
where $\A$ is the arrangement of reflection hyperplanes of $W$ (\cite[Thm. 3.9]{Hum90}).

We make this more precise in the case $W$ is irreducible.
Then the eigenvalues of any Coxeter element are $\exp(2\pi i \frac{m_i}{h})$ where $h$ is the Coxeter number (\cite[Thm. 3.19]{Hum90}).
Moreover, 
\begin{gather}
\label{30}1=m_1<m_2\le\cdots\le m_{\ell-1}<m_\ell=h-1,\\
\label{83}m_i+m_{\ell-i+1}=h.
\end{gather}
In particular, this implies that $\sum_{i=1}^{\ell}m_i=\frac{\ell h}{2}$.
For $m_1=1$, the $W$-invariant $2$-form $p_1$ is unique up to a constant factor. 
By a choice of a positive multiple of $p_1$, it determines a unique $W$-invariant Euclidean inner product $(\cdot,\cdot)$ on $V_\RR$, which turns $W$ into a subgroup of $\OG(V_\RR)$ and serves to identify $V_\RR$ and $V_\RR^*$. 
With respect to dual bases of $V_\RR$ and $V_\RR^*$ we notice that the two corresponding inner products have mutually inverse matrices. 
At the level of $V^*$, we denote by 
\[
\Gamma:=((x_i,x_j))=((dx_i,dx_j))
\]
the (symmetric) matrix of $(\cdot,\cdot)$ with respect to coordinates $x_1,\dots,x_\ell$.
In suitable coordinates
\begin{equation}\label{4}
p_1=\sum_{i=1}^\ell x_i^2,\quad(x,y)=\sum_{i=1}^\ell x_iy_i,\quad\Gamma=(\delta_{i,j}).
\end{equation}
We refer to such coordinates as standard coordinates. 
In case $W$ is reducible, we have the above situation on each of the irreducible summands separately.

Geometrically the finiteness of $S$ over $R$ means that the map 
\begin{equation}\label{39}
\xymat{
V=\Spec S\ar[r]^p&\Spec R=V/W
}
\end{equation}
is finite of degree $\#W$.
We identify the reflection arrangement $\A$ of $W$ with its underlying variety $\bigcup_{H\in\A}H$.
Let $\Delta$ be a reduced defining equation for $\A$, and denote by $D=p(\A)$ the discriminant.
An {\em anti-invariant} of $W$ is a relative invariant $f\in S$ with associated character $\det^{-1}$, that is, $wf=\det^{-1}(w)f$ for all $w\in W$.
The following crucial fact due to Solomon~\cite[\S3, Lem.]{Sol63} (see also (\cite[Prop. 3.13(b)]{Hum90}) implies that $\Delta^2$ is a reduced defining equation for $D$. 

\begin{thm}[Solomon]\label{8}
$R\Delta$ is the set of all anti-invariants.\qed
\end{thm}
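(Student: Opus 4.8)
\smallskip
\noindent\emph{Proof sketch.}
The plan is to establish the two inclusions separately. The inclusion $R\Delta\subseteq\{\text{anti-invariants}\}$ will follow immediately once we know that $\Delta$ is itself an anti-invariant: for $g\in R$ one gets $w(\Delta g)=(w\Delta)(wg)=\det(w)\,\Delta g$ from $w\Delta=\det(w)\Delta$ and $wg=g$; here $\det^{-1}=\det$ as characters of $W$, since $W$ acts by real matrices and so $\det(w)=\pm1$. Thus the first task is to check $w\Delta=\det(w)\Delta$ for all $w\in W$. Since $W$ permutes the hyperplanes of $\A$, it permutes the defining forms $\alpha_H$ up to nonzero scalars, so $w\Delta=\chi(w)\Delta$ for a homomorphism $\chi\colon W\to\CC\ssm\{0\}$; as $W$ is generated by reflections and $\det$ is a homomorphism, it suffices to evaluate $\chi$ on one reflection $s_H$. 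I would split the product $\Delta$ into the factor $\alpha_H$ (sent to $-\alpha_H$), the factors $\alpha_{H'}$ with $H'\ne H$ and $s_H(H')=H'$ (each fixed by $s_H$, since among linear forms only the multiples of $\alpha_H$ are negated by $s_H$, so an eigenform defining a hyperplane $\ne H$ must have eigenvalue $+1$), and the products $\alpha_{H'}\alpha_{s_H(H')}$ over the transposed pairs $\{H',s_H(H')\}$ (each fixed, because the two transposition scalars are mutually inverse as $s_H^2=1$). This yields $s_H\Delta=-\Delta=\det(s_H)\Delta$, hence $\chi=\det$.

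For the reverse inclusion I would take an anti-invariant $f$, so that $s_Hf=-f$ for every reflection $s_H$, and show $\Delta\mid f$. Fixing $H\in\A$, choose linear coordinates on $V$ in which one coordinate equals $\alpha_H$ and the remaining ones span the $(+1)$-eigenspace of $s_H$ on the space of linear forms; then $s_H$ negates $\alpha_H$ and fixes the other coordinates, so $s_Hf=-f$ forces $f$ to be odd as a polynomial in $\alpha_H$, i.e.\ $\alpha_H\mid f$. Running over all $H\in\A$ and using that $S$ is a unique factorization domain in which the $\alpha_H$ are pairwise non-associate irreducibles, I conclude that $\Delta=\prod_{H\in\A}\alpha_H$ divides $f$; write $f=\Delta g$ with $g\in S$. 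Combining $wf=\det(w)f$ with $w\Delta=\det(w)\Delta$ then gives $wg=g$ for all $w\in W$, so $g\in S^W=R$ and $f\in R\Delta$.

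The only step carrying genuine content is the divisibility claim $\alpha_H\mid f$; the rest is bookkeeping with the character $\chi$ and with unique factorization. I expect the cleanest route to that step to be the reflection-adapted change of coordinates above, which reduces it to the triviality that a polynomial odd in one variable is divisible by that variable; arguing instead directly from $f(s_Hx)=-f(x)$ and restricting to $H$ amounts to the same thing but is slightly more awkward to write down.
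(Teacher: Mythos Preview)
Your argument is correct and is the standard proof (essentially that of \cite[Prop.~3.13]{Hum90}). The paper itself does not prove this statement: it is stated with a \qed and attributed to Solomon~\cite[\S3, Lem.]{Sol63}, with Humphreys cited as a secondary reference, so there is no in-paper proof to compare against beyond noting that your sketch matches the cited sources.
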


A second fundamental fact, due to K. Saito~\cite[\S3]{Sai93}, is the following

\begin{thm}[Saito]\label{9}
For irreducible $W$, $\Delta^2$ is a monic polynomial in $p_\ell$ of degree $\ell$, that is,
\[\pushQED{\qed}
\Delta^2=\sum_{k=0}^\ell a_{\ell-k}(p_1,\dots,p_{\ell-1})p_\ell^k,\ \text{ with }\quad a_0=1.\qedhere
\]
\end{thm}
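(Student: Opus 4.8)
\emph{Proof strategy.} The plan is to pin down the degree of $\Delta^2$ in $p_\ell$ by a weight count, and then to show that the top coefficient is nonzero by exhibiting a point of $V$ that lies over a generic point of the $p_\ell$-axis in $V/W$ but not on the arrangement $\A$.

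First I would note that $\Delta^2$ is $W$-invariant (being the square of an anti-invariant, cf.\ Theorem~\ref{8}), hence $\Delta^2\in R=\CC[p_1,\dots,p_\ell]$ by Chevalley's theorem. Since $\Delta$ is a reduced equation of the $\#\A$ reflection hyperplanes, $\deg\Delta^2=2\#\A=2\sum_i m_i=\ell h=\ell\deg p_\ell$ in the irreducible case, using \eqref{1} and the consequence $\sum_i m_i=\ell h/2$ of \eqref{83}. Writing $\Delta^2$ as a polynomial in $p_1,\dots,p_\ell$, every occurring monomial $p_1^{\alpha_1}\cdots p_\ell^{\alpha_\ell}$ satisfies $\sum_i\alpha_i w_i=\ell h$ by homogeneity; as $w_i\geq 2$ for all $i$, this forces $\alpha_\ell h\le\ell h$, i.e.\ $\alpha_\ell\le\ell$, and the only such monomial with $\alpha_\ell=\ell$ is $p_\ell^\ell$. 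Hence
\[
\Delta^2=a_0\,p_\ell^{\,\ell}+a_1(p_1,\dots,p_{\ell-1})\,p_\ell^{\,\ell-1}+\cdots+a_\ell(p_1,\dots,p_{\ell-1}),\qquad a_0\in\CC,
\]
with $a_{\ell-k}$ homogeneous of degree $(\ell-k)h$ in $x_1,\dots,x_\ell$. It then remains only to prove $a_0\neq 0$; the normalization $a_0=1$ follows by rescaling $\Delta$.

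To see $a_0\neq 0$, suppose $a_0=0$. Then every $a_{\ell-k}$ appearing has positive degree, so $\Delta^2\in(p_1,\dots,p_{\ell-1})R\subseteq(p_1,\dots,p_{\ell-1})S$; in particular $\Delta^2$ vanishes at every $v\in V$ with $p_1(v)=\cdots=p_{\ell-1}(v)=0$. I would contradict this by producing such a $v$ with $\Delta(v)\neq 0$. Take $v$ to be an eigenvector, with eigenvalue $\zeta:=\exp(2\pi i/h)$, of a Coxeter element $c$ of $W$. By the classical fact that a Coxeter element is a regular element in Springer's sense (its primitive $h$-th root of unity eigenvalue is regular, so a corresponding eigenvector avoids all reflection hyperplanes), we get $v\notin\A$, i.e.\ $\Delta(v)\neq 0$. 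On the other hand $W$-invariance and homogeneity give $p_i(v)=p_i(cv)=p_i(\zeta v)=\zeta^{w_i}p_i(v)$ for every $i$, and for $i<\ell$ one has $0<w_i=m_i+1<h$ by \eqref{30}, so $\zeta^{w_i}\neq 1$ and hence $p_i(v)=0$. This contradiction establishes $a_0\neq 0$. (One may also remark that $v\neq 0$ forces $p_\ell(v)\neq 0$, since the fiber $p^{-1}(0)$ is the single point $0$, so that $v$ genuinely maps to a nonzero point of the $p_\ell$-axis; this is not needed for the argument.)

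The only nonformal ingredient — and thus the main point to be careful about — is the existence of the regular Coxeter eigenvector $v$, i.e.\ that $\exp(2\pi i/h)$ is a regular eigenvalue of a Coxeter element; the rest (invariance of $\Delta^2$, the weight bookkeeping giving the bound $\alpha_\ell\le\ell$ and $a_0\in\CC$, and the translation of $a_0=0$ into a vanishing statement on $V$) is routine. Irreducibility of $W$ is essential throughout, since only then are $h$ and the relations \eqref{30}, \eqref{83} available, and indeed for reducible $W$ the statement fails because $p_\ell$ then involves the variables of only one irreducible summand while $\Delta$ involves all of them.
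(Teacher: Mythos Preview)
Your proof is correct. Note, however, that the paper does not actually prove this theorem: it is stated as a known result of K.~Saito with a reference to \cite[\S3]{Sai93}, and the display is closed off with a \textsc{qed} symbol. So there is no proof in the paper to compare against.

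Your argument is the standard direct one. The degree bound $\alpha_\ell\le\ell$ and the identification of $a_0$ as a constant are pure weight bookkeeping (and in fact require only $w_i>0$, not $w_i\ge 2$, though the latter is harmless). The substantive input, as you correctly isolate, is the existence of a regular $e^{2\pi i/h}$-eigenvector of a Coxeter element; this is classical (Springer's theory of regular elements; cf.\ the discussion around \cite[Thm.~3.19]{Hum90}). Once that is available, the vanishing $p_i(v)=0$ for $i<\ell$ follows from $2\le w_i\le h-1$ (a consequence of \eqref{30}), and the contradiction showing $a_0\neq 0$ is immediate. The closing remark on the necessity of irreducibility is apt.
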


We denote by $\Der_S$ and $\Der_R$ the modules of vector fields on $V=\Spec S$ and $V/W=\Spec R$ respectively. 
The group $W$ acts naturally on $\Der_S$.
Terao~\cite{Ter83} showed that each $\theta\in \Der(-\log D)$ has a unique lifting $p^{-1}(\theta)$ to $V$ and that the set of lifted vector fields  is 
\[
p^{-1}\Der(-\log D)=(\Der_S)^W,\quad p^*\Der(-\log D)=(\Der_S)^W\otimes_RS=\Der(-\log\A),
\]
and both $\A$ and $D$ are free divisors.
This can be seen as follows:
We denote by 
\begin{equation}\label{100}
J:=(\p_{x_j}(p_i))
\end{equation}
the Jacobian matrix of $p$ in \eqref{39} with respect to the coordinates $x_1,\dots,x_\ell$ and $p_1,\dots,p_\ell$. 
Via the identification of the 1-form $dp_i$ with a vector field $\eta_i$ such that $(dp_i,-)=\ideal{\eta_i,-}$,
\begin{align}\label{40}
dp_i=\sum_{j=1}^\ell\p_{x_j}(p_i)dx_j\leftrightarrow\eta_i&=\sum_{j=1}^\ell\ideal{\eta_i,dx_j}\p_{x_j}=\sum_{j=1}^\ell(dp_i,dx_j)\p_{x_j}\\
\nonumber&=\sum_{k,j=1}^\ell\p_{x_k}(p_i)(dx_k,dx_j)\p_{x_j}=\sum_{k,j=1}^\ell\p_{x_k}(p_i)(x_k,x_j)\p_{x_j},
\end{align}
the basic invariants define invariant vector fields $\eta_1,\dots,\eta_\ell\in(\Der_S)^W$, which must then be in $\Der(-\log\A)$.
By \eqref{40}, their Saito matrix reads
\begin{equation}\label{2}
(\eta_j(x_i))=\Gamma J^t
\end{equation}
Now $\det J$ is an anti-invariant because $J$ is the differential of the invariant map $p=(p_1,\dots,p_\ell)$. 
Hence, $\det J\in\CC^*\Delta$ by Theorem~\ref{8}, \eqref{1}, and the algebraic independence of the $p_i$.
By scaling $p$, we can therefore assume that 
\begin{equation}\label{19}
\det J=\Delta.
\end{equation}
Saito's criterion (\cite[(1.8) Thm.~ii)]{Sai80}) then shows that $\A$ is free with basis $\eta_1,\dots,\eta_\ell$.
Applying the tangent map $tp$ (see \eqref{197}) gives vector fields $\delta_1,\dots,\delta_\ell\in\Der_R$ such that $\delta_j\circ p=tp(\eta_j)$ with (symmetric) Saito matrix
\begin{equation}\label{3}
K=(K^i_j):=(\delta_j(p_i))=J\Gamma J^t
\end{equation}
with $\det(J\Gamma J^t)\in\CC^*\Delta^2$.
At generic points of $\A$, $p$ is a fold map and hence 
\begin{equation}\label{32}
\delta_1,\dots,\delta_\ell\in\Der(-\log D). 
\end{equation}
Again Saito's criterion shows that $D$ is a free divisor with basis $\delta_1,\dots,\delta_\ell$.
In standard coordinates as in \eqref{4}, this proves

\begin{lem}\label{191}
$D$ admits a symmetric Saito matrix $K=JJ^t$.
\end{lem}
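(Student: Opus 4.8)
The lemma is essentially a corollary of the discussion preceding it, so the plan is to collect the relevant pieces and specialize to standard coordinates. First I would recall from \eqref{3} that the vector fields $\delta_1,\dots,\delta_\ell\in\Der_R$ determined by $\delta_j\circ p=tp(\eta_j)$ have Saito matrix $K=J\Gamma J^t$, and that this matrix is symmetric: since $\Gamma$ is the symmetric matrix of the $W$-invariant inner product, $K^t=(J\Gamma J^t)^t=J\Gamma^t J^t=J\Gamma J^t=K$.

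Next I would confirm that $K$ is genuinely a Saito matrix for $D$, i.e.\ that $\delta_1,\dots,\delta_\ell$ form an $\OO_{V/W}$-basis of $\Der(-\log D)$, by appealing to Saito's criterion. The $\delta_j$ lie in $\Der(-\log D)$ by \eqref{32}: at generic points of $\A$ the map $p$ is a fold, so each $\delta_j$ preserves the ideal of $D$ on a dense open subset of $V/W$, hence everywhere since $D$ is reduced. Moreover $\det K=\det(\Gamma)\,(\det J)^2=\det(\Gamma)\,\Delta^2\in\CC^*\Delta^2$ by \eqref{19}, and $\Delta^2$ is a reduced defining equation for $D$ by Solomon's Theorem~\ref{8}. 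Saito's criterion then yields simultaneously that $D$ is a free divisor and that $\delta_1,\dots,\delta_\ell$ are a basis of $\Der(-\log D)$, so that $K$ is a Saito matrix for $D$.

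Finally, since $W$ (or each of its irreducible summands) carries a degree-$2$ invariant, I would pass to standard coordinates as in \eqref{4}, in which $\Gamma=(\delta_{i,j})$ is the identity matrix. Substituting into $K=J\Gamma J^t$ then gives $K=JJ^t$, the asserted symmetric Saito matrix.

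There is no genuine obstacle here; the only point worth a word of care is the step from ``$\delta_j$ preserves the ideal of $D$ at generic points of $\A$'' to ``$\delta_j\in\Der(-\log D)$'', which rests on the standard fact that for a reduced divisor the logarithmic condition may be checked on a dense open subset.
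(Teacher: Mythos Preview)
Your proposal is correct and follows exactly the paper's own argument: the lemma is stated immediately after the discussion establishing \eqref{3}, \eqref{19} and \eqref{32}, with the sentence ``In standard coordinates as in \eqref{4}, this proves'' serving as the entire proof. Your recapitulation of symmetry of $K=J\Gamma J^t$, the application of Saito's criterion via $\det K\in\CC^*\Delta^2$, and the specialization $\Gamma=\mathrm{id}$ in standard coordinates mirrors the paper precisely.
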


If $W$ is irreducible then, in standard coordinates as in \eqref{4}, 
\begin{equation}\label{31}
\chi_w:=\frac12\delta_1=\sum_{i=1}^\ell w_ip_i\p_{p_i}.
\end{equation}
We shall refer to the grading defined by this semisimple operator as the $w$-grading.
In particular, $\delta_k$ is $w$-homogeneous of degree $w_k-w_1$.
If $W$ is reducible, we have a homogeneity such as \eqref{31} for each irreducible summand.

Throughout the paper we will abbreviate 
\[
S_\A:=S/S\Delta,\quad R_D:=R/R\Delta^2.
\]

\section{F-manifold-structures}\label{190}

In this section we prove Theorem~\ref{106}. 
We will make use of the Frobenius manifold structure on $V/W$, constructed by Dubrovin in \cite{Dub98}. 
However our main reference for background on Frobenius manifolds (including this result) is the book of Hertling \cite{Her02}. 
In fact the only aspects of the Frobenius structure we use are the existence of an integrable structure of commutative associative $\CC$-algebras on the fibers of the tangent bundle; a manifold with this structure is called by Hertling and Manin an \emph{F-manifold}. 
This notion is much simpler than
that of Frobenius manifold, omitting as it does all of the metric properties, and the connections, which make the definition of Frobenius manifold so complicated. 
Following Hertling, we use local analytic methods, and in particular local analytic coordinate changes, in
order to make use of normal forms.
Such analytic methods will be justified in Remark~\ref{212}, and we pass to the analytic category without changing our notation.

The following account summarizes parts of \cite[Ch.~2]{Her02}. 
For any $n$-dimensional F-manifold $M$, the multiplication on $TM$ is encoded by an $n$-dimensional subvariety of $T^*M$, the {\em analytic spectrum} $L$, as follows: for each point $p\in M$, points in $T^*_pM$  determine $\CC$-linear maps $T_pM\to\CC$; among these, a finite number are $\CC$- algebra homomorphisms. 
These finitely many points in each fiber of $T^*M$ piece together to form $L$. 
The composite 
\beq\label{192}
\Der_M\to\pi_*\OO_{T^*M}\to\pi_*\OO_L
\eeq
is in fact an isomorphism of $\CC$-algebras (\cite[Thm.~2.3]{Her02}).
 
The multiplication $\circ$ in $TM$ satisfies the integrability property 
\[
\Lie_{X\circ Y}(\circ)=X\circ\Lie_Y(\circ)+Y\circ\Lie_X(\circ).
\]
Provided the multiplication is generically semi-simple, as is the case for the structure constructed by Dubrovin and Hertling, this implies that $L$ is Lagrangian (\cite[Theorem 3.2]{Her02}). 
This in turn means that the restriction to $L$ of the canonical action form $\alpha$ on $T^*M$ is closed and therefore exact.
A \emph{generating function} for $L$ is any function $F\in\OO_L$ such that $dF=\alpha|_L$.
A generating function determines an \emph{Euler field} $E$ on $M$, namely a vector field mapped to $F$ by the isomorphism \eqref{192}. 
The discriminant of $M$ is defined by any of the following equivalent characterizations:
\ben
\item $D=\pi(F^{-1}(0))$,
\item $D$ is the set of points $x\in M$ where the endomorphism $E\circ\colon T_xM\to T_xM$ is not invertible.
\een
Similarly, the module $\Der(-\log D)$ may be viewed as either
\ben
\item the set of vector fields whose image under the isomorphism \eqref{192} vanishes on $F^{-1}(0)$, or equivalently as
\item the image in $\Der_M$ of multiplication by $E$. 
\een
This yields the well-known

\begin{lem}\label{194}
The discriminant $D$ is a free divisor, and the cokernel $\tilde\OO_D=\coker K$ of the Saito matrix $K$ of $D$ acquires an $\OO_M$-algebra structure as quotient of the Frobenius manifold multiplication in $\Der_M$.
\end{lem}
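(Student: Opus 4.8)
The plan is to identify the cokernel of $K$ with the pushforward $\pi_*\OO_L$ of the structure sheaf of the analytic spectrum $L\subset T^*(V/W)$, and then transport the $\CC$-algebra structure from $\pi_*\OO_L$ to $\coker K$ along the isomorphism \eqref{192}. Concretely, $K=(\delta_j(p_i))$ is the matrix, in the basis $\partial_{p_1},\dots,\partial_{p_\ell}$ of $\Der_R$ and the corresponding basis of $\OO_R^\ell\cong\Der_R$, of the $\OO_R$-linear endomorphism of $\Der_R$ given by multiplication by the Euler field $E$ in the Frobenius (F-manifold) multiplication $\circ$. Indeed, the $\delta_j$ span $\Der(-\log D)$, which by the characterization recalled just before Lemma~\ref{194} is exactly the image in $\Der_R$ of $E\circ(-)$; so $\coker K=\Der_R/\,E\circ\Der_R$.

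First I would make the displayed identification precise: choosing the flat coordinates only for bookkeeping is unnecessary, since by Saito's theorem $\Delta^2$ is monic in $p_\ell$ and the $\delta_j$ have $w$-degrees $w_j-w_1$, the vector fields $\delta_1,\dots,\delta_\ell$ form an $\OO_R$-basis of $\Der(-\log D)$, and the matrix expressing them in terms of $\partial_{p_1},\dots,\partial_{p_\ell}$ is $K=J\Gamma J^t$. Next, via the isomorphism $\Der_R\xrightarrow{\ \sim\ }\pi_*\OO_L$ of \eqref{192} (which is an isomorphism of $\CC$-algebras for the $\circ$-multiplication on the source), the submodule $E\circ\Der_R$ maps isomorphically onto $F\cdot\pi_*\OO_L$, where $F\in\OO_L$ is the generating function corresponding to $E$. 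Hence $\coker K\cong\pi_*\OO_L/F\cdot\pi_*\OO_L=\pi_*\OO_{F^{-1}(0)}$, which is a quotient ring of $\pi_*\OO_L$; pulling this ring structure back through the isomorphism equips $\coker K$ with an $\OO_R$-algebra, hence (after the analytic-to-algebraic comparison of Remark~\ref{212}) an $R$-algebra structure. That this ring structure is "the quotient of the Frobenius manifold multiplication in $\Der_R$" is then automatic: it is literally the multiplication on $\Der_R$ induced by $\circ$, passed to the quotient by the ideal $E\circ\Der_R$, which is an ideal for $\circ$ precisely because $E\circ(-)$ is multiplication by a single element $E$.

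The only genuine point requiring care — and thus the main obstacle — is verifying that the submodule $E\circ\Der_R$ corresponds under \eqref{192} to an \emph{ideal} of $\pi_*\OO_L$ (equivalently, that $E\circ\Der_R$ is an ideal for $\circ$ on $\Der_R$), and that the quotient is the \emph{reduced-or-not} structure sheaf $\pi_*\OO_{F^{-1}(0)}$ rather than some coincidentally isomorphic module. This follows from Hertling's description: under \eqref{192} the multiplication-by-$E$ operator becomes multiplication by the image of $E$, which is exactly the generating function $F$, so $E\circ\Der_R\leftrightarrow F\,\pi_*\OO_L$ and the cokernel is the coordinate ring of the subscheme $F^{-1}(0)\subset L$. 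I would also remark, in parallel with the singularity-theoretic discussion of \eqref{113}, that this is compatible with $D=\pi(F^{-1}(0))$ being a free divisor with Saito matrix $K$, so that $\Spec\coker K$ is finite over $D$; but that geometric content is the subject of the later sections and is not needed for the bare ring-structure claim of the lemma.
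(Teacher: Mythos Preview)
Your proposal is correct and follows essentially the same route as the paper's proof: both identify $\coker K$ with $\Der_R/(E\circ\Der_R)=\Der_R/\Der(-\log D)$ and then, via the algebra isomorphism \eqref{192}, with $\pi_*\OO_{F^{-1}(0)}$, from which the ring structure is read off. Your additional remarks (that $E\circ\Der_R$ is an ideal because under \eqref{192} it becomes the principal ideal $F\cdot\pi_*\OO_L$, and the reference to Remark~\ref{212}) merely spell out what the paper leaves implicit; the one small imprecision is that your justification only shows that the image of $K$ equals $E\circ\Der_R$, not literally that $K$ is the matrix of $E\circ(-)$, but that is all you need for the cokernel statement.
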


\begin{proof} 
The matrix of multiplication by $E$ with respect to the basis $\p_{x_1},\dots,\p_{x_\ell}$ of $\Der_M$ is $K$. 
Thus
\beq\label{209}
\xymat{
0\ar[r]&\OO_M^\ell\ar[d]^\cong\ar[r]^-K&\OO_M^\ell\ar[d]^\cong\ar[r]&\tilde\OO_D\ar[d]^\cong\ar[r]&0\\
0\ar[r]&\Der_M\ar[r]^-{E\circ}&\Der_M\ar[r]&\Der_M/\Der_M(-\log D)\ar[r]&0}
\eeq
is a presentation of $\Der_M/E\circ\Der_M=\Der_M/\Der_M(-\log D)$, which is itself isomorphic to $\pi_*\OO_{F^{-1}(0)}$. 
\end{proof}

We now return to the context and notation of Section \ref{80}. 
Our $F$-manifold is $M=V/W$, and instead of the sheaf $\OO_M$ we consider the algebra $R=\CC[V/W]$ of its global sections.
Based on Lemma~\ref{194}, we define the space $\tilde D:=\Spec\tilde R_D$.

Recall from \eqref{100} that $J\colon S^\ell\to S^\ell$ is the matrix of the morphism
\beq\label{197}
tp:\Der_S\to p^*\Der_R=\Der_R\otimes_RS,\quad 
tp(\sum_{j=1}^n\eta_j\p_{x_j})=\sum_{i=1}^n\sum_{j=1}^n\eta_j\p_{x_j}(p_i)\p_{p_i},
\eeq
defined by left composition (of vector fields as sections of $TV$) with $dp$. 
The following diagram, in which the vertical arrows are bundle projections,  helps
to keep track of these morphisms. 
Sections of $p^*\Der_R$ are maps from bottom left to top right making the lower triangle in the diagram commute.
\beq\label{std}
\xymatrix{
TV\ar[r]^{dp}\ar[d]&T(V/W)\ar[d]\\
V\ar[r]_p&V/W}
\eeq

Both $tp:Der_S\to p^*\Der_R$ and  and $\omega p:\Der_R\to p^*\Der_R$, defined by right composition with $p$, are familiar in singularity theory.
By definition, 
\beq\label{201}
\chi\in\Der_R\text{ lifts to }\eta\in\Der_S\quad\iff\quad tp(\eta)=\omega p(\chi).
\eeq

Using Lemma~\ref{191}, \eqref{209}, and the obvious identifications, there is a commutative diagram of $S$-modules
\beq\label{208}
\xymat{&J_\Delta\\
0\ar[r]&\Der_S\ar[r]^-{tp}\ar @{->>}[u]&\Der_R\otimes_RS\ar[r]&\tilde S_\A\ar[r]&0\\
0\ar[r]&R^\ell\otimes_RS\ar[u]_-{J^t}\ar[r]^-{K\otimes 1}&\Der_R\otimes_RS\ar[r]\ar[u]_=&\tilde R_D\otimes_RS\ar[r]\ar @{->>}[u]&0
}.
\eeq
Both rows here are exact: the upper row defines $\tilde S_\A$, and the lower row is the tensor product with the flat $R$-module $S$ of the short exact sequence defining $\tilde R_D$. 
Now $\tilde R_D\otimes_RS$, as a tensor product of rings, has a natural ring structure; to show that $\tilde S_{\A}$ is a ring, it will be enough to show

\begin{lem}\label{211}
The image of $tp$ is an ideal of $\Der_R\otimes_RS$.
\end{lem}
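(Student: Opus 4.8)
The plan is to show that the $\OO_V$-submodule $\img(tp)\subseteq\Der_R\otimes_RS$ is closed under the multiplication $\circ$ obtained by extending the F-manifold product on $\Der_R$ $S$-linearly to $\Der_R\otimes_RS$; recall this product has a unit $e$, the image of the unit field of the F-manifold structure on $V/W$. Since $\img(tp)$ is an $\OO_V$-submodule and $\Der_R$ generates $\Der_R\otimes_RS$ over $S$, it suffices to prove
\[
\chi\circ tp(\eta)\in\img(tp)\qquad\text{for all }\chi\in\Der_R,\ \eta\in\Der_S,
\]
and this inclusion of coherent sheaves may be checked stalkwise on $V$. I would verify it on the two open sets where the geometry is explicit and then remove the complementary, lower-dimensional locus by a Cohen--Macaulayness argument.

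On $V\ssm\A$ the matrix $J$ of $tp$ has unit determinant $\det J=\Delta$ by \eqref{19}, so $tp$ is an isomorphism there and $\img(tp)=\Der_R\otimes_RS$, trivially an ideal. If $x\in\A$ lies on a single reflecting hyperplane $H$, then $W_x=\langle s_H\rangle$ by Steinberg's theorem, so in suitable local analytic coordinates $y_1,\dots,y_\ell$ on $V$ and $q_1,\dots,q_\ell$ on $V/W$ (the passage to the analytic category being justified in Remark~\ref{212}) the map $p$ is the fold $(y_1,\dots,y_\ell)\mapsto(y_1^2,y_2,\dots,y_\ell)$; moreover $x$ lies on the smooth locus of $D=V(q_1)$, so the germ of the F-manifold at $p(x)$ is a product of a one-dimensional $A_1$-germ along the $q_1$-direction with a germ of trivial (``$A_0$'') multiplication in the remaining directions, and in particular $\circ$ is semisimple near $p(x)$. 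Writing $\epsilon_1,\dots,\epsilon_\ell$ for its idempotents and $u_1,\dots,u_\ell$ for the corresponding canonical coordinates, with $\epsilon_1$ the one for which $D=V(u_1)$ (so $u_1=(\text{unit})\cdot q_1$), one has $\img(tp)=\langle\,y_1\p_{q_1},\p_{q_2},\dots,\p_{q_\ell}\,\rangle$ and exactly $\epsilon_1$ fails to lie in $\img(tp)$. Expressing the $\p_{q_j}$ through the $\epsilon_i$, one checks directly that $\epsilon_i\circ(y_1\p_{q_1})$ and $\epsilon_i\circ\p_{q_j}$ lie in $\img(tp)$ for all $i,j$; the only case requiring care is $i=1$, which comes down to the divisibility relations $\p u_1/\p q_j\in(y_1)$ for $j\geq2$ together with $y_1\epsilon_1\in\img(tp)$, both flowing from $u_1=(\text{unit})\cdot y_1^2$.

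It remains to deduce the inclusion at the remaining points, which all lie in $\Sing(\A)$, a set of dimension at most $\ell-2$. For this I use that $\tilde S_\A=\coker(tp)=\coker(J)$ is Cohen--Macaulay of pure dimension $\ell-1$: since $\det J=\Delta\neq0$, the map $tp$ is injective, so $0\to S^\ell\xrightarrow{J}S^\ell\to\tilde S_\A\to0$ is a free resolution of length one and $\operatorname{depth}_S\tilde S_\A=\ell-1$ by Auslander--Buchsbaum, while $\operatorname{Supp}\tilde S_\A=V(\det J)=V(\Delta)=\A$ is equidimensional of dimension $\ell-1$. Hence every associated prime of $\tilde S_\A$ has dimension $\ell-1$, so $\tilde S_\A$ has no nonzero submodule of dimension $\le\ell-2$. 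But $(\Der_R\circ\img(tp)+\img(tp))/\img(tp)$ is a submodule of $\tilde S_\A$ which, by the previous step, is supported inside $\Sing(\A)$; therefore it vanishes, so $\Der_R\circ\img(tp)\subseteq\img(tp)$ and $\img(tp)$ is an ideal of $\Der_R\otimes_RS$.

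The step I expect to be the main obstacle is the local analysis at the smooth points of $\A$: identifying the F-manifold germ at $p(x)$ as the product of an $A_1$-germ and a trivial one, controlling the change of coordinates between Saito's flat coordinates and the canonical coordinates of $\circ$, and --- most delicately --- verifying that the idempotent $\epsilon_1$, which does \emph{not} belong to $\img(tp)$, nonetheless multiplies $\img(tp)$ into itself.
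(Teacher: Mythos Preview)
Your argument is correct, but the route differs from the paper's. The paper first proves a structural result (Proposition~\ref{200}): it lifts the Frobenius multiplication on $\Der_R$ to a (non-unital) multiplication on $\Der_S$ and makes $\Der_S$ into a $\Der_R$-module, in such a way that $tp$ becomes $\Der_R$-linear. Lemma~\ref{211} is then a one-liner: $(\eta\otimes g)\cdot tp(\xi)=tp(\eta\cdot g\xi)$. You instead verify the ideal property directly, without constructing the lifted module structure. Both arguments rest on the same local picture (the fold map and canonical coordinates at generic smooth points of $\A$) and both extend across the remaining codimension-$2$ locus by a depth argument --- Hartogs in the paper, absence of embedded primes in the Cohen--Macaulay module $\coker J$ for you. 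The paper's approach yields the extra structure on $\Der_S$, which is of independent interest; yours is more economical if one only wants the lemma.

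Two remarks. First, semisimplicity of the F-manifold does not hold at \emph{every} smooth point of $D$, only outside the caustic (what the paper calls the bifurcation set). So your step~2 should be restricted to generic smooth points of $\A$; this is harmless, since the exceptional locus still has codimension $\ge 2$ and your Cohen--Macaulay step absorbs it. Second, your anticipated obstacle largely dissolves if you follow the paper's choice of coordinates: take the $q_i$ themselves to be canonical (so $q_i=u_i$) and then adjust the source coordinates so that $p$ becomes exactly $(y_1,\dots,y_\ell)\mapsto(y_1^2,y_2,\dots,y_\ell)$. Then $\img(tp)=\langle y_1\epsilon_1,\epsilon_2,\dots,\epsilon_\ell\rangle$, and since $\epsilon_i\circ\epsilon_j=\delta_{ij}\epsilon_i$ the ideal property is immediate --- no change of basis between $\partial_{q_j}$ and $\epsilon_i$, and no divisibility analysis, is needed. (Relatedly, the germ at $p(x)$ is a product of $\ell$ one-dimensional semisimple pieces, not ``$A_1$ times trivial''; your description is off, though your subsequent use of idempotents is correct.)
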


We prove Lemma~\ref{211} by showing that the Frobenius multiplication in $\Der_R$ lifts to a $p^*\Der_R$-module structure on $\Der_S$, and that $tp:\Der_S\to\Der_R\otimes_RS$ is $\Der_R$-linear.  

\begin{prp}\label{200}\
\ben
\item\label{200a} The Frobenius multiplication in $\Der_R$ can be lifted to $\Der_S$, though without multiplicative unit.
\item\label{200b} The same procedure makes $\Der_S$ into a $\Der_R$-module. 
\item\label{200c} The map $tp$ in \eqref{197} is $\Der_R$-linear, with respect to the structure in \eqref{200b} and Frobenius multiplication induced on $\Der_R\otimes_RS$.
\een
\end{prp}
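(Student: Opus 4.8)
The plan is to reduce all of \eqref{200a}--\eqref{200c} to the single statement that $\img(tp)$ is an ideal of $\Der_R\otimes_RS$ for the Frobenius multiplication, and to prove that by a local computation at a generic point of each reflecting hyperplane. For the reduction: by \eqref{100} the matrix of $tp$ is the Jacobian $J$, and $\det J=\Delta$ is a nonzerodivisor, so $tp\colon\Der_S\to\Der_R\otimes_RS$ is injective and an isomorphism after inverting $\Delta$. Granting the ideal property, transport structure along $tp$: for $\eta,\eta'\in\Der_S$ put $\eta*\eta':=tp^{-1}(tp(\eta)\circ tp(\eta'))$, and for $\chi\in\Der_R$, $\eta\in\Der_S$ put $\chi\cdot\eta:=tp^{-1}(\chi\circ tp(\eta))$, where $\Der_R$ acts on $\Der_R\otimes_RS$ through Frobenius multiplication in the first factor. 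Both are well defined because $\img(tp)$ is stable under $\circ$-multiplication by $\Der_R\otimes_RS$; one checks at once that $*$ is commutative and associative and that $\cdot$ makes $(\Der_S,*)$ a $\Der_R$-algebra, giving \eqref{200a} and \eqref{200b}, while \eqref{200c} holds by construction, $tp(\chi\cdot\eta)=\chi\circ tp(\eta)$. There is no $*$-unit, since it would map under $tp$ to the $\circ$-unit of $\Der_R\otimes_RS$, which is not in $\img(tp)$, as the local model below shows.

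To prove $\img(tp)$ is an ideal, let $\mathfrak a\subseteq\Der_R\otimes_RS$ be the ideal it generates. As $tp$ is an isomorphism off $\A$, the finitely generated module $\mathfrak a/\img(tp)$ is supported on $\A$, and by the top row of \eqref{208} it is an $S$-submodule of $\tilde S_\A=\coker(tp)=\coker(J)$. Since $\det J=\Delta$ is a nonzerodivisor, $0\to S^\ell\xrightarrow{J}S^\ell\to\tilde S_\A\to0$ is a free resolution, so $\tilde S_\A$ is a maximal Cohen--Macaulay module over the reduced equidimensional ring $S_\A=S/S\Delta$ (indeed $\coker$ of a matrix factorization of $\Delta$); in particular $\tilde S_\A$ has no embedded associated primes, so $\mathrm{Ass}_{S_\A}(\tilde S_\A)=\mathrm{Min}(S_\A)$, and any submodule of $\tilde S_\A$ whose support has positive codimension in $S_\A$ vanishes. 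It therefore suffices to show that $\img(tp)$ is an ideal locally at the generic point of each hyperplane $H\in\A$, equivalently in $\OO_{V,x}$ for $x$ running over a dense subset of $H$.

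Fix such $x$ in general position in $H$, so that $W_x=\langle s_H\rangle$ has order two and $p(x)$ is a smooth point of $D$. Near $p(x)$ the F-manifold $(V/W,\circ)$ then splits as a product $(\CC,\circ)\times M'$, where $(\CC,\circ)$ is the one-dimensional $A_1$-type F-manifold and $M'$ has empty discriminant (the local structure of an F-manifold near a generic point of its reduced discriminant; cf.\ \cite[Ch.~2]{Her02}). Let $e_1\in\Der_R$ be the idempotent spanning the $A_1$-factor, so $e_1\circ\Der_{M'}=0$, and let $w_1$ be the corresponding coordinate, with $e_1=\p_{w_1}$ and $D=\{w_1=0\}$ near $p(x)$. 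Since $p^{-1}(D)=\A$, $p$ has local degree $\#W_x=2$ at $x$, and $\A$ is reduced and smooth at $x$, the map $p$ is a fold near $x$: in suitable analytic coordinates $v_1,\dots,v_\ell$ on $(V,x)$ and $w_1,\dots,w_\ell$ on $(V/W,p(x))$ one has $p=(v_1^2,v_2,\dots,v_\ell)$ with $p^*w_1=v_1^2$. Then $tp(\p_{v_1})=2v_1\,p^*e_1$ and $tp(\p_{v_i})=p^*\p_{w_i}$ for $i\ge2$, so $\img(tp)=\OO_{V,x}\,v_1\,p^*e_1\oplus p^*\Der_{M'}$, which is an ideal of $\Der_R\otimes_RS=\OO_{V,x}\,p^*e_1\oplus p^*\Der_{M'}$ (the product of the $A_1$-algebra with the algebra of $M'$); moreover the $\circ$-unit does not lie in it, its $p^*e_1$-component being a unit while that component of $\img(tp)$ is the ideal $(v_1)$. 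This proves the proposition.

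The step I expect to be the main obstacle is this local analysis — aligning the fold structure of $p$ with the product decomposition of the F-manifold near $p(x)$, i.e.\ verifying that the canonical coordinate $w_1$ cutting out $D$ pulls back to the square of a local equation for $H$ (this rests on $D=p(\A)$ being exactly the branch locus of the double cover $p$ near $p(x)$). Once that is established the ideal property is immediate, and the remaining ingredients — the transport of structure along $tp$ and the Cohen--Macaulay argument globalizing the local statement — are routine.
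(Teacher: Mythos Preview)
Your proposal is correct and uses the same local ingredient as the paper --- canonical coordinates on $(V/W,p(x))$ together with the fold normal form of $p$ at a generic arrangement point --- but the global organization is genuinely different. The paper works in the opposite logical order: it \emph{defines} the lifted product on $\Der_S$ directly, by writing down \eqref{202} and \eqref{204}, computing what they force in the local fold model $\p_{x_i}\circ\p_{x_j}$, and then invoking Hartogs to extend the structure tensor across the codimension-$2$ locus $\Sing(\A)$; Lemma~\ref{211} (the ideal property of $\img(tp)$) is deduced afterwards from \eqref{200c}. You instead prove the ideal property first, globalizing the same local computation via the maximal Cohen--Macaulay property of $\coker J$ (no embedded primes) rather than Hartogs, and then transport the algebra structure along the injection $tp$. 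Your route has the advantage that you never have to write down the lifted product explicitly or check that local formul\ae\ glue; the paper's route, conversely, yields the explicit local multiplication table, which is informative in its own right. The ``main obstacle'' you flag --- that the canonical coordinate $u_1$ cutting out $D$ pulls back to the square of an equation for $H$ --- is precisely what the paper argues carefully, citing \cite[Cor.~4.6]{Her02} for the fact that the idempotent $e_1$ normal to $T_{p(x)}D$ is one of the canonical idempotents, and then completing the square in the fold coordinate.
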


\begin{proof}
By \eqref{201}, for a multiplication in $\Der_S$, \eqref{200a} means that
\beq\label{202}
tp(\eta_1\circ\eta_2)=\omega p(\chi_1\circ\chi_2)
\eeq
where $\eta_i\in\Der_S$ is a lift of $\chi_i\in\Der_R$ for $i=1,2$.
Similarly, the scalar multiplication for \eqref{200b} must satisfy
\beq\label{204}
tp(\chi\cdot\eta)=\omega p(\chi\circ\xi)
\eeq
where $\chi\in\Der_R$ and $\eta\in\Der_S$ is a lift of $\xi\in\Der_R$.

Locally, at a point $v\in V\setminus\A$, $p$, $tp$ and $\omega p$ are isomorphisms, so there is nothing to prove.
Now suppose $v\in H$ is a generic point on a reflecting hyperplane $H\in\A$, with $p(v)$ outside the bifurcation set $B$. 
In a neighborhood of $p(v)$ in $V/W$, we may take canonical coordinates $u_1,\dots,u_\ell$ (cf.~\cite[2.12.(ii)]{Her02}). 
These are characterized by the property that the vector fields $e_i:=\p_{u_i}$, $i=1,\dots,\ell$ satisfy $e_i\circ e_j=\delta_{i,j}\cdot e_i$.
By \cite[Cor.~4.6]{Her02}, the tangent space $T_{p(v)}D$ is spanned by $\ell-1$ of these idempotent vector fields, and the remaining idempotent, which we label $e_1$, is normal to it. 
The map $p_v\colon(V,v)\to(V/W,p(v))$ has multiplicity $2$, critical set $H$ and set of critical values $D$, from which it follows that $d_vp\colon T_vH\to T_{p(v)}D$ is an isomorphism. 
Since we have fixed our coordinate system on $(V/W,p)$, we are free to choose only the coordinates on $(V,v)$. 
Define $x_i=u_i\circ p$ for $i=2,\dots,\ell$. 
To extend these to a coordinate system on $(V,v)$, we may take as $x_1$ any function whose derivative at $v$ is linearly independent of $d_vx_2,\dots, d_vx_\ell$. 
This means we may take as $x_1$ any defining equation of the critical set (the hyperplane $H$) of $p$ at $v$.   
With respect to these coordinates, $p$ takes the form
\beq\label{203}
p_v(x_1,\dots,x_\ell)=(f(x_1,\dots,x_\ell),x_2,\dots,x_\ell).
\eeq
As $p_v$ has critical set $\{x_1=0\}$ and discriminant $\{u_1=0\}$, both $f$ and $\p_{x_1}(f)$ vanish along $\{x_1=0\}$. 
Thus $f(x)=x_1^2g(x)$ for some $g\in\OO_{V,v}$. 
Since $p$ has multiplicity $2$ at $v$, $g(0)\neq 0$. 
Now replace the coordinate $x_1$ by $x_1g(x)^{1/2}$. 
With respect to these new coordinates, which we still call $x_1,\dots,x_\ell$, $p_v$ becomes a standard fold:
\[
p_v(x_1,\dots,x_\ell)=(x_1^2,x_2,\dots,x_\ell).
\]
We can now explicitly calculate the multiplication in $\Der_S$, locally at $v$:
\[
\begin{cases}
tp_v(x_1\p_{x_1})=\omega p_v(2u_1\p_{u_1}),\\
tp_v(\p_{x_i})=\omega p_v(\p_{u_i}),&\text{ for }i=2,\dots,\ell.
\end{cases}
\]
So \eqref{202} implies
\begin{align*}
tp_v((x_1\p_{x_1})\circ(x_1\p_{x_1}))
&=\omega p_v((2u_1\p_{u_1})\circ(2u_1\p_{u_1}))\\
&=\omega p_v(4u_1^2\p_{u_1})
=\omega p_v(2u_1(2u_1\p_{u_1}))
=tp_v((2x_1^2)x_1\p_{x_1}),
\end{align*}
and hence $x_1\p_{x_1}\circ x_1\p_{x_1}=2x_1^3\p_{x_1}$.
So in order that \eqref{202} should hold, we are forced to define
\[
\p_{x_i}\circ\p_{x_j}=
\begin{cases}
2x_1\p_{x_1},&\text{for }i=j=1,\\
\delta_{i,j}\cdot\p_{x_i},&\text{otherwise}.
\end{cases}
\]
Since the multiplication in $\Der_V$ is uniquely defined by \eqref{202} outside codimension $2$, it extends to $V$ by Hartog's Extension Theorem.
This proves \eqref{200a}; \eqref{200b} is obtained by an analogous argument using \eqref{204}.

Finally, \eqref{200c} follows from \eqref{201} and \eqref{204} on $V\setminus\A$, and therefore holds everywhere. 
\end{proof} 

\begin{proof}[Proof of Lemma~\ref{211}]
Let $\xi\in \Der_S$, $g\in S$ and $\eta\in\Der_R$. 
By Proposition~\ref{200}.\eqref{200c} and the evident $S$-linearity of
the lifted Frobenius multiplication, 
\[
(\eta\otimes_Rg)\cdot tp(\xi)=tp(\eta\circ g\xi).
\]
\end{proof}

We have proved the following result, which implies \eqref{106a} of Theorem~\ref{106}.

\begin{thm}\label{206}
The cokernel $\tilde S_\A=\coker J$ of the transposed Saito matrix of $\A$ is an $S_\A$-algebra.\qed
\end{thm}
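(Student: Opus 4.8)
The plan is to assemble the structures already in place and observe that the theorem is then immediate. By the upper row of \eqref{208} and the identification of $J$ with the matrix of $tp$ in \eqref{197}, the module $\tilde S_\A=\coker J$ is canonically the quotient $(\Der_R\otimes_RS)/\img(tp)$; so it suffices to exhibit a commutative ring structure on $\Der_R\otimes_RS$ in which $\img(tp)$ is an ideal, and then to keep track of the scalars.

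Such a ring structure is exactly the Frobenius multiplication. Recall from \eqref{192} that $\Der_R$, with the F-manifold multiplication, is a commutative associative unital $\CC$-algebra (it is $\pi_*\OO_L$); extending scalars, $\Der_R\otimes_RS$ is again such an algebra, with unit $\mathbf 1\otimes1$ for $\mathbf 1$ the F-manifold unit, and the map $S\to\Der_R\otimes_RS$, $s\mapsto\mathbf 1\otimes s$, is a ring homomorphism inducing the standard $S$-module structure. By Proposition~\ref{200}.\eqref{200c} and the $S$-linearity of the lifted multiplication, Lemma~\ref{211} tells us that $\img(tp)$ is an ideal of this ring.

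Hence $\tilde S_\A=(\Der_R\otimes_RS)/\img(tp)$ is a commutative associative unital $\CC$-algebra, and composing with $s\mapsto\mathbf 1\otimes s$ makes it an $S$-algebra whose underlying module is $\coker J$. (For compatibility with the diagram: since $K=JJ^t$ in standard coordinates by Lemma~\ref{191} and $J$ is the matrix of $tp$, the left square of \eqref{208} commutes, so $\img(K\otimes1)\subseteq\img(tp)$; thus the right-hand column of \eqref{208} is the quotient map of rings $\tilde R_D\otimes_RS\onto\tilde S_\A$, and the ring structure on $\tilde S_\A$ is equally the one induced from the $R$-algebra $\tilde R_D$ of Lemma~\ref{194}.)

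Finally, to sharpen ``$S$-algebra'' to ``$S_\A$-algebra'' I would use the normalization $\det J=\Delta$ of \eqref{19}: multiplying $J$ by its adjugate gives $\Delta\cdot I_\ell$, so $\Delta$ annihilates $\coker J=\tilde S_\A$, and therefore the structure map $S\to\tilde S_\A$ kills $S\Delta$ and factors through $S_\A=S/S\Delta$. I do not anticipate a genuine obstacle at this stage: all the substance lies upstream, in Proposition~\ref{200} (the unit-free lift of the Frobenius multiplication to $\Der_S$) and in Lemma~\ref{211} (the ideal property), and what remains is bookkeeping --- making sure that every identification in \eqref{208} invoked ring-theoretically is a morphism of rings, and that the commuting left square, with its transpose and with $\Gamma$ absorbed into the standard coordinates, is recorded correctly.
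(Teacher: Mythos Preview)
Your proposal is correct and follows essentially the same route as the paper: identify $\tilde S_\A$ with $(\Der_R\otimes_RS)/\img(tp)$ via \eqref{208}, use the Frobenius multiplication to make $\Der_R\otimes_RS$ a ring, invoke Lemma~\ref{211} (proved via Proposition~\ref{200}) for the ideal property, and pass to the quotient. Your explicit check that $\Delta$ annihilates $\coker J$ via the adjugate, and hence that the $S$-algebra structure factors through $S_\A$, is a detail the paper leaves implicit.
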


Based on Theorem~\ref{206}, we define the space $\tilde\A:=\Spec\tilde S_\A$.

\begin{rmk}\label{212}
Even though our proof uses complex analytic methods, such as canonical coordinates in the proof of Proposition~\ref{200}, the conclusion is valid over any field over which the basic invariants are defined. We show this in Section~\ref{97} below by proving that the fact that $\coker J$ is an $S$-algebra is equivalent to a condition on ideal membership, the so-called {\em rank condition} (rc). 
\end{rmk} 

We end this section by clarifying the relationship between $\tilde\A$ and $\tilde D\times_D\A$ which are not isomorphic in general.
For $\tilde R_D\otimes _RS_\A$ is the cokernel of $1\otimes\Delta\colon\tilde R_D\otimes_RS\to\tilde R_D\otimes_RS$, and using the epimorphism $\Der_R\otimes_RS\onto\tilde R_D\otimes_RS$ we find that there is an epimorphism $\Der_R\otimes_RS\onto\tilde R_D\otimes_RS_\A$, whose kernel is equal to $\Der_R\otimes_RS\Delta+\Der(-\log D)\otimes_RS$. 
Both summands here are contained in the image of $tp\colon\Der_S\to\Der_R\otimes_RS$, the first by Cramer's rule and the second because every vector field $\eta\in\Der(-\log D)$ is liftable via $p$. 
Thus $\tilde S_\A$ is a quotient of $\tilde R_D\otimes_RS_\A$.
The kernel $N$ of the projection $\tilde R_D\otimes S_\A\to\tilde S_\A$ is the quotient
\[
N:=tp(\Der_S)/\bigl(\Der(-\log D)\otimes_RS+\Der_R\otimes_RS\Delta\bigr).
\]
At a generic point $x\in\A$ this vanishes: here $p$ is a fold map, right-left-equivalent to 
\[
(x_1,\dots,x_\ell)\mapsto (x_1,\dots,x_{\ell-1},x_\ell^2)
\]
and an easy local calculation shows that in this case $N_x=0$. 
However, if $p$  has multiplicity $>2$ at $x$ then $N_x\neq 0$. 
For example at an $A_2$ point, $p$ is right-left equivalent to  
\[
(x_1,\dots, x_\ell)\mapsto (x_1^2+x_1x_2+x_2^2,x_1x_2(x_1+x_2),x_3,\dots,x_\ell);
\]
$tp(\Der_S)$ is generated by  $\p_{p_3},\dots,\p_{p_\ell}$ together with
\[
(2x_1+x_2)\p_{p_1}+(2x_1x_2+x_2^2)\p_{p_2},(x_1+2x_2)\p_{p_1}+(x_1^2+2x_1x_2)\p_{p_2}, 
\]
while the coefficients of $\p_{p_1}$ in the generators of $\Der(-\log D)\otimes_RS+\Der_R\otimes_RS\Delta$ are at least quadratic in $x_1,\dots,x_\ell$.
In fact we have 

\begin{thm}\label{220}
$\tilde\A=(\tilde D\times_D\A)_\red$ 
\end{thm}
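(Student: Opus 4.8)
The plan is to exploit the commutative diagram \eqref{208} together with the two facts established in the preceding discussion: that $\tilde S_\A$ is a quotient of $\tilde R_D\otimes_RS_\A$ with kernel $N$, and that $N_x=0$ at every generic point $x\in\A$. The first step is to observe that, since $\tilde R_D\otimes_RS_\A$ is (by definition of the fiber product as a scheme) the coordinate ring of $\tilde D\times_D\A$, the surjection $\tilde R_D\otimes_RS_\A\onto\tilde S_\A$ already gives a closed embedding $\tilde\A=\Spec\tilde S_\A\into\tilde D\times_D\A$. It therefore remains only to prove that this embedding is a bijection on points and, more precisely, that $N$ is a nilpotent ideal, so that the two schemes coincide after reduction.

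The second, and main, step is to show $N$ is nilpotent. For this I would argue that $\tilde S_\A$ and $\tilde R_D\otimes_RS_\A$ have the same support, and that the larger ring has no embedded components away from $\A$ that could survive the quotient with nonzero nilpotents in codimension one. Concretely: $\tilde R_D\otimes_RS$ is a maximal Cohen--Macaulay $S$-module (it is the cokernel of the Saito matrix $K$, which drops rank exactly on $D$, of codimension one), hence reduced-over-codimension-one considerations apply to $\tilde R_D\otimes_RS_\A=(\tilde R_D\otimes_RS)/\Delta$. The module $N$ is supported on the locus where $p$ has multiplicity $>2$, which is the singular locus $\Sing\A$ of the arrangement together with the higher flats — in all cases a set of codimension $\geq 2$ in $\A$. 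Since $\tilde R_D\otimes_R S_\A$ satisfies Serre's condition $S_1$ (indeed it is Cohen--Macaulay of dimension $\ell-1$, being $S$-free modulo one MCM-regular element $\Delta$), it has no associated primes of codimension $\geq 2$; hence any submodule supported in codimension $\geq 2$ is zero. But $N$ need not be a submodule of $\tilde R_D\otimes_R S_\A$ — it is a submodule of $tp(\Der_S)$, and the relevant statement is that $N$, as the kernel of a surjection of rings, is an ideal whose support has codimension $\geq 2$; combined with $\tilde S_\A$ itself satisfying $S_1$ (it is the cokernel of $J$, again MCM over $S$ modulo $\Delta=\det J$), this forces $N$ to be contained in the radical of $0$ in $\tilde R_D\otimes_R S_\A$, i.e.\ nilpotent. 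Thus $(\tilde D\times_D\A)_\red=\Spec\bigl((\tilde R_D\otimes_RS_\A)_\red\bigr)=\Spec\bigl((\tilde S_\A)_\red\bigr)$, and the last equality combined with $\tilde\A=\Spec\tilde S_\A$ being reduced (which follows from the generic structure along $\A$ and the birationality in Theorem~\ref{106}\eqref{106bi}) gives $\tilde\A=(\tilde D\times_D\A)_\red$.

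The step I expect to be the genuine obstacle is pinning down that $N$ is nilpotent rather than merely generically zero: a priori a surjection of Cohen--Macaulay rings can have a non-nilpotent kernel supported in codimension one, and here one must rule that out using that both $\tilde S_\A$ and $\tilde R_D\otimes_R S_\A$ are cut out from the \emph{same} MCM $S$-module $\Der_R\otimes_RS$ by ideals agreeing away from $\Sing\A$. The clean way to do this is to note that $N\cdot\Delta^m=0$ for some $m$ — since every generator of $\tilde R_D\otimes_R S$ lifts through $tp$ after multiplication by $\Delta$ (Cramer's rule, as already used for the summand $\Der_R\otimes_RS\Delta$) — and then to check that on the reduced, hence $S_1$, ring $(\tilde R_D\otimes_R S_\A)_\red$ multiplication by $\Delta$ is injective on the complement of the codimension-$\geq 2$ locus $\Sing\A$, forcing the image of $N$ there to vanish; a short Hartogs/Serre argument then propagates this to all of $\A$. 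Everything else is bookkeeping with the diagram \eqref{208} and the explicit fold/$A_2$ computations already recorded.
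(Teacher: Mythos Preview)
Your overall architecture is right: one must show (a) the kernel $N$ of $\tilde R_D\otimes_RS_\A\onto\tilde S_\A$ is nilpotent, and (b) $\tilde S_\A$ is reduced. But the execution has a genuine error and misses the clean argument.

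\textbf{The error.} Your claim that $\tilde R_D\otimes_RS_\A$ is Cohen--Macaulay (``$S$-free modulo one MCM-regular element $\Delta$'') is false. The module $\tilde R_D\otimes_RS=\coker(K\otimes 1)$ is annihilated by $\Delta^2$, so $\Delta$ cannot be a non-zero-divisor on it. Worse, your own argument shows the claim is self-refuting: if $\tilde R_D\otimes_RS_\A$ satisfied $S_1$, then, exactly as you say, any ideal supported in codimension $\geq 2$ would be zero, forcing $N=0$---but the $A_2$ computation recorded just before the theorem shows $N_x\neq 0$. So $\tilde R_D\otimes_RS_\A$ is \emph{not} $S_1$, and this route to nilpotence of $N$ is blocked. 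Your fallback in the last paragraph (``$N\cdot\Delta^m=0$'' and a Hartogs argument) is also confused: $\Delta$ already annihilates $\tilde R_D\otimes_RS_\A$, so multiplication by $\Delta$ is identically zero there and carries no information.

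\textbf{What actually works.} Nilpotence of $N$ is nearly free and needs no depth condition on $\tilde R_D\otimes_RS_\A$: since $\tilde R_D\otimes_RS_\A$ is finite over $S_\A$, every minimal prime of it contracts to a minimal prime of $S_\A$, i.e.\ to the generic point of some hyperplane; there $N$ vanishes by the fold computation, so $N$ lies in every minimal prime and is nilpotent. The substantive point is (b), reducedness of $\tilde S_\A$, which you wave at but do not prove. The paper's argument is short: $\tilde S_\A=\coker J$ is a maximal Cohen--Macaulay $S_\A$-module of rank $1$, hence isomorphic to $S_\A$ (and so reduced) at smooth points of $\A$; being finite over $S_\A$, it is a Cohen--Macaulay \emph{ring}, and a Cohen--Macaulay ring that is generically reduced is reduced. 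Combining this with the nilpotence of $N$ gives $(\tilde R_D\otimes_RS_\A)_\red=(\tilde S_\A)_\red=\tilde S_\A$, which is the assertion.
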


\begin{proof}
$\tilde S_\A=\coker tp$, with $tp$ as in \eqref{208}, is a maximal Cohen--Macaulay $S_\A$-module of rank $1$.
This means that at a smooth point of $\A$, $\tilde S_\A$ is isomorphic to $S_\A$, and is thus reduced. 
As $\tilde S_\A$ is finite over $S_\A$, its depth over itself (assuming it is a ring) is equal to its depth over $S_\A$. 
Since it is therefore a Cohen--Macaulay ring, generic reducedness implies reducedness.
\end{proof}
For later use we note that 
by \cite[Cor.~3.15]{MP89}, we have

\begin{thm}\label{105}
$\tilde\A$ is Cohen--Macaulay and $\tilde D$ is Gorenstein.\qed
\end{thm}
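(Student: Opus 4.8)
The plan is to realize both $\tilde\A=\Spec\coker J$ and $\tilde D=\Spec\coker K$ as spectra of cokernels of square matrices with non-zero-divisor determinant over a regular ring, and then to invoke the general results of Mond and Pellikaan on such determinantal algebras, \cite[Cor.~3.15]{MP89}. Indeed $J$ is an $\ell\times\ell$-matrix over the polynomial ring $S$ with $\det J=\Delta$ a non-zero-divisor (see \eqref{19}), and by Theorem~\ref{206} the cokernel $\tilde S_\A=\coker J$ carries an $S_\A$-algebra structure compatible with its $S$-module structure; similarly $K=J\Gamma J^t$ (see \eqref{3}) is an $\ell\times\ell$-matrix over the polynomial ring $R$ with $\det K\in\CC^*\Delta^2$ a non-zero-divisor, and $\tilde R_D=\coker K$ is an $R$-algebra by Lemma~\ref{194}. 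The decisive extra feature for the Gorenstein statement is that, $\Gamma$ being symmetric, $K$ is symmetric --- in standard coordinates simply $K=JJ^t$ (Lemma~\ref{191}).

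First I would treat Cohen--Macaulayness. The free presentation $0\to S^\ell\xrightarrow{J}S^\ell\to\coker J\to0$ shows that $\coker J$ has projective dimension $1$ over $S$, so by the Auslander--Buchsbaum formula it has depth $\ell-1$; that is, $\tilde S_\A$ is a maximal Cohen--Macaulay module over $S_\A=S/S\Delta$. Since $\tilde S_\A$ is module-finite over $S_\A$ and its ring structure is $S_\A$-compatible, its depth over itself equals its depth over $S_\A$, hence equals $\ell-1=\dim\tilde\A$; so $\tilde\A$ is a Cohen--Macaulay ring. (This is exactly the argument already invoked in the proof of Theorem~\ref{220}.) The identical reasoning applied to $K$ over $R$ shows $\tilde D$ is Cohen--Macaulay.

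Next, the Gorenstein property of $\tilde D$, where symmetry of $K$ enters. Since $R$ is regular and $\tilde R_D$ is Cohen--Macaulay of codimension $1$, its canonical module is $\omega_{\tilde R_D}\cong\operatorname{Ext}^1_R(\tilde R_D,R)$ up to a degree shift. Dualizing $0\to R^\ell\xrightarrow{K}R^\ell\to\tilde R_D\to0$ into $R$, and using $\Hom_R(\tilde R_D,R)=0$ (the module $\tilde R_D$ being $\Delta^2$-torsion over the domain $R$), one obtains $\operatorname{Ext}^1_R(\tilde R_D,R)\cong\coker K^t=\coker K=\tilde R_D$ since $K^t=K$. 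Hence $\omega_{\tilde R_D}\cong\tilde R_D$, and together with Cohen--Macaulayness this says that $\tilde R_D$ is Gorenstein, i.e.\ $\tilde D$ is Gorenstein.

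The one point that genuinely needs care --- and that I expect to be the main obstacle --- is that the isomorphism $\operatorname{Ext}^1_R(\tilde R_D,R)\cong\tilde R_D$ coming from the symmetric presentation is $\tilde R_D$-linear, not merely $R$-linear; equivalently, that the module self-duality of the cokernel of a symmetric matrix is compatible with the ring structure, which is itself pinned down by the rank condition of Section~\ref{97}. Rather than unwinding this compatibility by hand --- lifting multiplication by an element of $\tilde R_D$ to a chain endomorphism of the presentation and tracking it through the dualization --- the economical route is to verify the hypotheses of \cite[Cor.~3.15]{MP89} (a square matrix over a regular, or Cohen--Macaulay, base, with non-zero-divisor determinant, whose cokernel is a ring with compatible module structure, and additionally symmetric for the Gorenstein conclusion) and to cite it directly for both assertions.
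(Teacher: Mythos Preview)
Your proposal is correct and takes essentially the same approach as the paper: the paper's entire proof consists of the citation ``by \cite[Cor.~3.15]{MP89}'' placed immediately before the statement, and you arrive at the same citation after unpacking the Auslander--Buchsbaum and symmetric-presentation arguments that underlie it. Your identification of the $\tilde R_D$-linearity of the duality isomorphism as the genuine subtlety is apt, and deferring it to \cite{MP89} is exactly what the paper does.
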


\section{Algebra structures on cokernels of square matrices}\label{97}

\subsection{Rank condition}

In this subsection we recall a condition on the rows of the adjugate of a square matrix over a ring $R$, which is equivalent to that matrix presenting an $R$-algebra, at least in the local and local graded cases.
It is the key to proving Corollary~\ref{110} in the Introduction.

Let $R$ be an $\ell$-dimensional (graded) local Cohen--Macaulay ring with maximal (graded) ideal $\mm$.
In the graded local case, we assume that all $R$-modules are graded and all $R$-linear maps are homogeneous.

Let $\Lambda$ be an $\ell\times\ell$-matrix over $R$ with transpose $A:=\Lambda^t$.
We consider both $\Lambda$ and $A$ as $R$-linear maps $R^\ell\to R^\ell$. 
Assume that $\Delta:=\det\Lambda$ is a reduced non-zero-divisor and set $D=V(\Delta)$.
By Cramer's rule $\Delta$ annihilates $M:=\coker\Lambda$ which is hence a module over $R_D:=R/R\Delta$. 
For any ideal $I\subseteq R$, we denote by $I_D:=R_DI$ its image in $R_D$.
By $Q_D:=Q(R_D)$, we denote the total ring of fractions of $R_D$. 

The {\em $k$-th Fitting ideal} of $M$ {\em over $R$}, $\Fit^k(M)$, is the ideal of $R$ generated by the $(\ell-k)\times(\ell-k)$-minors of $\Lambda$. 
It is an invariant of $M$, and independent of the presentation $\Lambda$. 
We denote by $m^i_j$ the generator of $\Fit^1(M)$ obtained from $\Lambda$ by deleting row $i$ and column $j$.
Note that $\Fit_D^k(M)$ is the $k$'th Fitting ideal of $M$ over $R_D$. 
For properties of Fitting ideals, see e.g.~\cite[Ch.~20]{Eis95}.

\begin{dfn}\label{78}
We say that the \emph{rank condition} (rc) holds for $\Lambda$ if $\grade\Fit^1(M)\ge 2$ and $\Fit^1(M)$ is equal to the ideal of maximal minors of the matrix obtained from $\Lambda$ by deleting one of its rows, possibly after left multiplication of $\Lambda$ by some invertible matrix over $R$.
\end{dfn}

Note that (rc) implies that $\Fit_D^1(M)$ is a maximal Cohen--Macaulay $R_D$-module, by the Hilbert--Burch theorem. 
It turns out that (rc) depends only on the module $M=\coker\Lambda$, and not on the choice of presentation $\Lambda$. This is a consequence of the following two theorems, which also make clear the reason for our interest in the condition (rc).

\begin{thm}[{\cite[Thm.~3.4]{MP89}}]\label{162}
If $M$ is an $R_D$-algebra then (rc) holds for $\Lambda$.\qed
\end{thm}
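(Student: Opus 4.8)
The plan is to deduce the two clauses of (rc) separately, exploiting that $M=\coker A$ carries a ring structure over $R_D$ and that $R_D$ is a one-dimensional Cohen--Macaulay ring (being the hypersurface $V(\Delta)$ in an $\ell$-dimensional CM ring with $\Delta$ a non-zero-divisor). First I would observe that, since $M$ is an $R_D$-algebra which is finite over $R_D$ of rank $\ell$ as an $R$-module (hence of positive generic rank over $R_D$), the structure map $R_D\to M$ makes $M$ a faithful $R_D$-module; in particular $\Ann_{R_D}(M)=0$, so $\Fit^0_D(M)$ is $\Delta$-torsion, i.e.\ $\Fit^0(M)\subseteq R\Delta$ up to radical, consistent with $\Delta$ generating it. The serious content is $\Fit^1$.

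For the grade statement $\grade\Fit^1(M)\ge 2$: localize at a height-one prime $\pp$ of $R$ containing $\Fit^1(M)$ and aim for a contradiction. Away from $V(\Fit^1(M))$ the module $M$ is locally free of rank $\le 1$ over $R_D$, hence (being an algebra) locally isomorphic to $R_D$ there; so $V(\Fit^1(M))$ is exactly the non-normal/branch locus of $\Spec M\to\Spec R_D$, and the claim is that this locus has codimension $\ge 2$ in $\Spec R$, i.e.\ codimension $\ge 1$ in $D$. If instead $\Fit^1(M)$ had a minimal prime $\pp$ of height $1$, then $\pp\supseteq R\Delta$, so $\pp/R\Delta$ is a minimal prime of $R_D$; localizing, $(R_D)_\pp$ is a one-dimensional reduced (since $\Delta$ reduced) CM local ring, and $M_\pp$ is a finite torsion-free $(R_D)_\pp$-algebra of rank $1$ — hence $M_\pp=(R_D)_\pp$, contradicting $\pp\supseteq\Fit^1(M)$, which forces $M_\pp$ not free of rank $\le 1$. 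Thus no height-one prime contains $\Fit^1(M)$, and since $R$ is CM, $\grade\Fit^1(M)=\codim\Fit^1(M)\ge 2$.

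For the Hilbert--Burch-type statement — that $\Fit^1(M)$ equals the ideal of maximal minors of $A$ with one row deleted, after left-multiplying $A$ by an invertible matrix — I would use the following: since $M=\coker A$ is an algebra, it has a distinguished generator, the unit $1_M$, and choosing $1_M$ as the first basis vector of the target $R^\ell$ (a change of basis, i.e.\ left multiplication of $A$ by an invertible matrix) we get that the first row of the presentation can be taken to encode the relations expressing $1_M$ — concretely, in the new presentation the first column block of $A$ can be arranged so that the submatrix obtained by deleting the first row already generates $\Fit^1(M)$, because the extra relations coming from "$1_M$ is the identity" make the minors through the first row redundant. The key algebraic input is Cramer's-rule bookkeeping on the adjugate combined with the associativity/unitality of the multiplication on $M$, expressed as ideal membership among the $m^i_j$; this is precisely the kind of computation that Section~\ref{97} is set up to run, and I expect this step — pinning down \emph{which} invertible left factor works and verifying the minor ideal really collapses to the deleted-row minors — to be the main obstacle. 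Once both clauses hold, (rc) follows, and the independence of (rc) from the presentation $A$ is automatic since $M$ alone was used.
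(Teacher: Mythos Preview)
The paper does not give its own proof of this theorem; it cites \cite[Thm.~3.4]{MP89} and records only the key point of that proof: if one takes generators $e,m_2,\dots,m_\ell$ of $M$ with $e=1_M$ the unit, then $\Fit^1(M)$ equals the ideal of maximal minors of the resulting presentation $A$ with its first (the $e$-) row deleted. Your plan for the second clause of (rc) --- choosing $1_M$ as the first basis vector of the target $R^\ell$ and arguing that the deleted-row minors already generate $\Fit^1(M)$ --- is precisely this approach, and your acknowledgment that the honest work lies in the Cramer/adjugate bookkeeping is accurate.

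Your argument for the grade clause, however, has a genuine gap. First a slip: if $\pp$ is a height-$1$ prime of $R$ containing $\Delta$, then $(R_D)_\pp=R_\pp/\Delta R_\pp$ is \emph{zero}-dimensional, in fact a field (since $\Delta$ is reduced), not a one-dimensional CM local ring. More seriously, the assertion ``$M_\pp$ is \dots of rank~$1$'' is exactly the statement you are trying to prove: $M_\pp$ has length~$1$ over the field $(R_D)_\pp$ if and only if $A$ has corank~$1$ at $\pp$, if and only if $\Fit^1(M)\not\subseteq\pp$. Nothing in the algebra structure of $M$ forces this; $M_\pp$ could a priori be $k\times k$ or $k[t]/(t^2)$. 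The correct argument does not use the algebra structure at all: since $R_D$ is reduced, $R_\pp$ is a one-dimensional CM local ring whose maximal ideal is generated by the non-zero-divisor $\Delta$, hence a DVR; Smith normal form over $R_\pp$ then shows $A$ has corank exactly~$1$ (because $\det A=\Delta$ has valuation~$1$), whence $\Fit^1(M)_\pp=R_\pp$, contradiction. So the grade clause is a consequence of the standing hypothesis that $\Delta$ is reduced, not of the ring structure on $M$.
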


The proof in \cite{MP89} shows that if $M$ is an $R_D$-algebra by $e, m_2,\dots, m_\ell$, where $e$ is the multiplicative identity of $M$, and $\Lambda$ is a presentation of $M$ with respect to these generators, then $\Fit^1(M)$ is equal to the ideal of maximal minors of $\Lambda$ with its first row deleted. 

The converse theorem also holds. 
A proof, due to de~Jong and van~Straten, can be found in \cite[Prop.~3.14]{MP89}. 
We will use some of the notions introduced there, however, and so we give a sketch, based on the accounts there and in \cite{JS90a}.

Recall that a \emph{fractional ideal} $U$ (over $R_D$) is a finitely generated $R_D$-submodule of $Q_D$ which contains a non-zero-divisor and that 
\beq\label{112}
\Hom_{R_D}(U,V)=V\colon_{Q_D}U
\eeq
is a fractional ideal, for any two fractional ideals $U$ and $V$.
We shall use this identification implicitly.
In particular, the duality functor 
\[
(-)^\vee:=\Hom_{R_D}(-,R_D)
\]
preserves fractional ideals. 
It is inclusion reversing and a duality on maximal Cohen--Macaulay fractional ideals (see \cite[Prop.~1.7]{JS90a}). 

\begin{thm}\label{160} 
If (rc) holds for $\Lambda$ then $M$ is a fractional ideal generated over $R_D$ by $\varphi_1,\dots,\varphi_\ell\in Q_D$ where
\beq\label{131}
\varphi_i m^\ell_j= m^i_j,\quad i,j=1,\dots,\ell;
\eeq
moreover $M$ is an $R_D$-subalgebra of $Q_D$ isomorphic to $\End_{R_D}(\Fit^1_D(M))$.
\end{thm}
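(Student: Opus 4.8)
The plan is to show that the rank condition forces the adjugate relations \eqref{131} to be consistent, then to identify the resulting fractional ideal with an endomorphism ring. First I would use the hypothesis that, after left multiplication by an invertible matrix, $\Fit^1(M)$ is the ideal of maximal minors of $A$ with (say) its first row deleted; call $B$ the $(\ell-1)\times\ell$ matrix obtained by deleting the first row, so the signed maximal minors of $B$ are $m^1_1,\dots,m^1_\ell$ up to sign. The Hilbert--Burch theorem then says $\grade\Fit^1(M)\ge 2$ makes $B$ the presenting matrix of the ideal $\Fit^1_D(M)$ as an $R_D$-module, i.e.\ the sequence $R_D^\ell \xrightarrow{B^t} R_D^{\ell-1}$ followed by the row of $m^1_j$'s is a (part of a) resolution, and in particular $\Fit^1_D(M)$ is maximal Cohen--Macaulay. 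Since each column of $A$ maps into the syzygies of $(m^1_1,\dots,m^1_\ell)$, and since the syzygy module of a grade-$2$ perfect ideal is exactly the column space of its Hilbert--Burch matrix $B^t$, dualizing gives the picture that $M=\coker A$ is computed by the same data.

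Next I would construct the elements $\varphi_i$. Working in $Q_D$, where at least one $m^\ell_j$ is a non-zero-divisor (this uses $\grade\Fit^1(M)\ge 2$, so $\Fit^1_D(M)$ contains a non-zero-divisor), define $\varphi_i := m^i_j/m^\ell_j$ and check that this is independent of $j$: the two-by-two minor identity among the cofactors of a square matrix (the classical ``Jacobi/adjugate'' identity $m^i_j m^k_l - m^i_l m^k_j \in (\Delta)$, here $\Delta=0$ in $R_D$) gives $m^i_j m^\ell_l = m^i_l m^\ell_j$ in $R_D$, so $\varphi_i$ is well defined in $Q_D$ and satisfies \eqref{131}. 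Then I would show $M \cong \sum_i R_D\varphi_i$ as $R_D$-modules: the map sending the $i$-th generator of $\coker A$ to $\varphi_i$ is well defined because the relations among the generators of $M$ are exactly the columns of $A$, and $\sum_i a_{ik}\varphi_i = (\sum_i a_{ik} m^i_j)/m^\ell_j = 0$ by the Laplace expansion $\sum_i a_{ik} m^i_j = \delta_{jk}\Delta = 0$ in $R_D$; injectivity/rank-one-ness comes from $M$ being MCM of rank $1$ (which the rank condition guarantees via the Hilbert--Burch consequence noted in the text), so the surjection $M\onto \sum R_D\varphi_i$ between torsion-free rank-one modules with the same total quotient is an isomorphism.

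Finally I would identify $\sum_i R_D\varphi_i$ with $\End_{R_D}(\Fit^1_D(M))$. Set $I := \Fit^1_D(M) = (m^1_1,\dots,m^1_\ell)$ (after the normalization; note row $1$ is distinguished). Multiplication by $\varphi_i$ carries the generator $m^\ell_j$ of (a translate of) $I$ to $m^i_j$, and more generally, using \eqref{131} together with the adjugate identities across all row indices, $\varphi_i \cdot m^k_j = m^{?}_j$-type relations show $\varphi_i I \subseteq$ the $R_D$-span of the $m^i_j$, which is again a translate of $I$; hence each $\varphi_i$ lies in $[I:_{Q_D} I] = \End_{R_D}(I)$ via \eqref{112}. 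Conversely, for $\psi \in \End_{R_D}(I) = [I:_{Q_D}I]$, the element $\psi$ is determined by $\psi m^\ell_1 \in I$, hence $\psi = (\sum_j c_j m^1_j)/m^\ell_1$ for some $c_j\in R_D$; rewriting the numerator via the cofactor identities as an $R_D$-combination of the $m^i_1$ expresses $\psi$ inside $\sum R_D\varphi_i$. Combined with the inclusion $\sum R_D\varphi_i \subseteq \End_{R_D}(I)$ and the fact that $\End_{R_D}(I)$ is a ring, this gives equality, and the ring structure transported to $M$ makes $M$ the asserted $R_D$-subalgebra of $Q_D$.

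The main obstacle I expect is the bookkeeping around the normalization ``possibly after left multiplication by an invertible matrix'': one must be careful that replacing $A$ by $UA$ with $U\in\GL_\ell(R)$ changes the cofactors $m^i_j$ only by a common factor $\det U$ (a unit) and a fixed invertible linear change among the row indices, so the ratios $\varphi_i=m^i_j/m^\ell_j$ are genuinely intrinsic; and one must check the compatibility of the distinguished row (the one deleted to get the Hilbert--Burch matrix of $I$) with the distinguished index $\ell$ in \eqref{131}, reindexing if necessary. The other delicate point is verifying that $\sum R_D\varphi_i$ is actually closed under multiplication directly from \eqref{131} — this is where the associativity/consistency content of (rc) really enters — but this can be finessed by instead proving equality with $\End_{R_D}(I)$, which is manifestly a ring, rather than checking the algebra axioms by hand.
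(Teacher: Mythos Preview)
Your overall architecture is sound and more explicit than the paper's, but the reverse inclusion $\End_{R_D}(I)\subseteq\sum_iR_D\varphi_i$ has a real gap. You propose to take $\psi\in[I:I]$, write $\psi m^\ell_1$ as an $R_D$-combination of generators of $I$, and then ``rewrite the numerator via the cofactor identities as an $R_D$-combination of the $m^i_1$''. There is no such cofactor identity: Jacobi's relation $m^i_jm^k_l\equiv m^i_lm^k_j\bmod(\Delta)$ lets you permute indices in products, but it does not let you express a single $m^\ell_j$ (or an $R_D$-combination thereof) as an $R_D$-linear combination of the $m^i_1$. What is actually needed is this: the vector $(\psi m^\ell_1,\dots,\psi m^\ell_\ell)$ lies in $\ker_{R_D}A$ (automatic, since $\psi$ times a relation is a relation), and one must know that $\ker_{R_D}A=\img_{R_D}\ad A$. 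Writing $(\psi m^\ell_j)_j=\ad A\cdot v$ then gives $\psi m^\ell_j=\sum_i v_i m^i_j$ for all $j$, hence $\psi=\sum_iv_i\varphi_i$. That equality $\ker A=\img\ad A$ is precisely the exactness of the $2$-periodic complex
\[
\xymat{\cdots\ar[r]&R_D^\ell\ar[r]^-{A}&R_D^\ell\ar[r]^-{\ad A}&R_D^\ell\ar[r]^-{A}&\cdots}
\]
coming from the matrix factorization $A\cdot\ad A=\ad A\cdot A=\Delta\cdot\id$; it is not a formal consequence of (rc) or of the adjugate identities alone.

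This is exactly the tool the paper uses, but packaged differently. The paper first presents $I=\Fit^1_D(M)$ by $\Lambda=A^t$ via Lemma~\ref{136}, then dualizes to obtain $I^\vee\cong\ker_{R_D}A$, and then invokes the $2$-periodic exact sequence to identify $\ker_{R_D}A$ simultaneously with $\coker_{R_D}A=M$ and with $\img_{R_D}\ad A$. From $I^\vee=\sum_iR_D\varphi_i$ as fractional ideals (this is what $\ker A=\img\ad A$ gives, once you unwind the embedding $I^\vee\hookrightarrow R_D^\ell$, $\psi\mapsto(\psi m^\ell_j)_j$) and from $\varphi_i I\subseteq I$ (your forward inclusion, using (rc)), one gets $I^\vee\subseteq[I:I]\subseteq[R_D:I]=I^\vee$, hence equality with $\End_{R_D}(I)$. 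So your approach and the paper's converge on the same exactness fact; the paper just names it, while your sketch tries to finesse it and cannot. Your injectivity argument in step~3 via ``$M$ is MCM of rank $1$, hence torsion-free'' is correct over the reduced Cohen--Macaulay ring $R_D$ (associated primes of an MCM module are minimal), so that part stands; the index confusion between the distinguished row $1$ and row $\ell$ is, as you note, harmless bookkeeping.
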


\begin{proof}
Using (rc) for $\Lambda$, Lemma~\ref{136} (below) yields a presentation 
\beq\label{132}
\xymat{0\ar[r]&R^{\ell}\ar[r]^-A&R^\ell\ar[rr]^-{(m^\ell_1,\dots,m^\ell_\ell)}&&F^1_D(M)\ar[r]&0}.
\eeq
In particular, $\Fit^1_D(M)$ is a maximal Cohen--Macaulay $R_D$-module of rank $1$, and therefore can be viewed as a fractional ideal. 
As $\Fit^1_D(M)$  is contained in $R_D$, $\Fit^1_D(M)^\vee$ is a fractional ideal containing $R_D$.
Dualizing \eqref{132} with respect to $R_D$ gives the exact sequence
\[
\xymat{0\ar[r]&F^1_D(M)^\vee\ar[r]&R_D^\ell\ar[r]^-\Lambda&R_D^\ell}.
\]
There is also a $2$-periodic exact sequence
\[\label{116}
\xymat{
\cdots\ar[r]&R_D^\ell\ar[r]^-\Lambda&R_D^\ell\ar[r]^-{\ad\Lambda}&R_D^\ell\ar[r]^-\Lambda&\cdots}.
\]
Therefore, 
\[
\Fit^1_D(M)^\vee\cong\ker_{R_D}\Lambda\cong
\begin{cases}
\coker_{R_D}\Lambda=M,\\
\img_{R_D}\ad\Lambda=\Fit^1_D(M).
\end{cases}
\]
and hence $\Fit^1_D(M)^\vee\cong\End_{R_D}(\Fit^1_D(M))$.
From this all the statements follow.
\end{proof}

In Subsection~\ref{117} we identify the generators in Theorem~\ref{160} in the case that $D$ is the reflection arrangement or discriminant of an irreducible Coxeter group.

\begin{lem}[{\cite[Prop.~1.10]{JS90a}}]\label{136}
Suppose that the ideal $I$ (generated by the maximal minors of the matrix $\Lambda$ with one row deleted) has grade $2$.
Then there is a free resolution  
\beq\label{134}\pushQED{\qed}
\xymat{0\ar[r]&R^\ell\ar[r]^A&R^\ell\ar[rr]^-{(m_1^\ell,\dots,m_\ell^\ell)}&&I_D\ar[r]&0}.\qedhere
\eeq
\end{lem}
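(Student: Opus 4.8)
The plan is to recognize Lemma~\ref{136} as an instance of the Hilbert--Burch theorem, reached after a little matrix bookkeeping. First I would fix notation: let $A'$ be the $(\ell-1)\times\ell$ matrix obtained from $A$ by deleting its $\ell$-th row, so that the ideal of maximal minors of $A'$ is exactly $I$, and let $\beta_0\colon R^\ell\to R$ be the map whose matrix is the row of cofactors $(C_{\ell 1},\dots,C_{\ell\ell})$ of $A$, where $C_{\ell j}=(-1)^{\ell+j}m^\ell_j$; thus $\img\beta_0=I$, and Laplace expansion of $\det A$ along its $\ell$-th row (resp.\ along another row) gives $\beta_0(c_\ell)=\Delta$ (resp.\ $\beta_0(c_j)=0$ for $j<\ell$), where $c_1,\dots,c_\ell$ denote the columns of $\Lambda=A^t$. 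The structural point is that deleting the $\ell$-th row of $A$ is the same as deleting the $\ell$-th column of $\Lambda$, so the transpose $(A')^t\colon R^{\ell-1}\to R^\ell$ is exactly $\Lambda$ with its last column removed, and its columns are $c_1,\dots,c_{\ell-1}$. Since $\grade I=2$ by hypothesis, the Hilbert--Burch theorem (equivalently, the Buchsbaum--Eisenbud acyclicity criterion; see, e.g., \cite[Ch.~20]{Eis95}) applies to the $(\ell-1)\times\ell$ matrix $A'$ and shows that $0\to R^{\ell-1}\xrightarrow{(A')^t}R^\ell\xrightarrow{\beta_0}R$ is exact; in particular $\ker\beta_0=\img(A')^t=Rc_1+\dots+Rc_{\ell-1}$.

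From here I would assemble the asserted free resolution of $I_D$. Let $\beta\colon R^\ell\to R_D$ be $\beta_0$ followed by the quotient map $R\onto R_D$, so $\img\beta=I_D$, and consider $0\to R^\ell\xrightarrow{\Lambda}R^\ell\xrightarrow{\beta}I_D\to 0$. Injectivity of $\Lambda$ is immediate, since $\det\Lambda=\det A=\Delta$ is a non-zero-divisor (if $\Lambda v=0$ then $\Delta v=(\ad\Lambda)\Lambda v=0$), and $\beta$ is surjective onto $I_D$ by construction. For exactness in the middle: $\beta(c_j)=\beta_0(c_j)\bmod\Delta=0$ for all $j$, so $\img\Lambda\subseteq\ker\beta$; conversely, if $v\in\ker\beta$ then $\beta_0(v)=\Delta r$ for some $r\in R$, hence $\beta_0(v-rc_\ell)=\Delta r-r\Delta=0$, so $v-rc_\ell\in\ker\beta_0=Rc_1+\dots+Rc_{\ell-1}$ and therefore $v=(v-rc_\ell)+rc_\ell\in\img\Lambda$. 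Combining the three facts gives the exact sequence, which is the assertion of the lemma.

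I do not expect a real obstacle: the only non-formal ingredient is the Hilbert--Burch acyclicity statement, which is entirely standard, and the remainder is careful bookkeeping between the matrices $A$, $A'$ and $\Lambda$ together with the Laplace expansion identities. The point one must get right is that the relevant grade-$2$ ideal is the ideal of maximal minors of the $(\ell-1)\times\ell$ matrix $A'$ and not of the square matrix $\Lambda$, and that the ``extra'', $\ell$-th column $c_\ell$ of $\Lambda$ precisely encodes the single new relation $\beta_0(c_\ell)=\Delta\equiv 0$ that is created when one passes from $R$ to $R_D=R/R\Delta$; this is also why the resolution is naturally one of $R$-modules, with $R_D$ entering only through the target $I_D$.
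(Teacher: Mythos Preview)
Your argument is correct. The paper does not actually prove Lemma~\ref{136}; it is stated with a \qed and attributed to \cite[Prop.~1.10]{JS90a}, so there is no in-paper proof to compare against. Your route via the Hilbert--Burch/Buchsbaum--Eisenbud acyclicity criterion is precisely the standard one underlying that reference: the grade-$2$ hypothesis makes $0\to R^{\ell-1}\xrightarrow{(A')^t}R^\ell\xrightarrow{\beta_0}R$ exact, and then the extra column $c_\ell$ of $\Lambda$ contributes the single new relation $\beta_0(c_\ell)=\Delta$ needed when passing to $R_D$. One cosmetic remark: the paper's $m^\ell_j$ are defined as unsigned minors, whereas you (correctly) compute with the signed cofactors $C_{\ell j}=(-1)^{\ell+j}m^\ell_j$; the two maps $R^\ell\to R$ differ by the diagonal isomorphism $\diag((-1)^{\ell+j})$, so the resulting resolutions are isomorphic and the discrepancy is harmless.
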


We can now make good the promise we made in Remark~\ref{212}: that Theorem~\ref{206} is valid over any field $\KK$ over which the basic invariants $p_1,\dots, p_\ell$ are defined.
From Theorems~\ref{206} and \ref{162} it follows that (rc) holds for $\Lambda$ analytically: 
for each $i,j\in\{1,\dots,\ell\}$, the equation
\beq\label{213}
m^i_j=a_1m^\ell_1+\cdots+a_\ell m^\ell_\ell
\eeq
has a solution $(a_1,\dots,a_\ell)$ where the $a_i$ are germs of complex analytic functions at $0$. 
We claim that \eqref{213} has solutions with $a_i\in\KK[V]$, and hence (rc) holds for $\Lambda$ algebraically. 

To prove this claim, first note that since the $m^i_j$ are homogeneous elements of $\KK[V]$, each $a_i$ can be replaced by its graded part of degree $D_i-D_\ell$ (see \eqref{215}).
Let $\KK[V]_d\subset\KK[V]$ be the $\KK$-vector space of all polynomials of degree $d$. 
The map
\[
m^\ell:\KK[V]_{D_i-D_\ell}^\ell\to\KK[V]_{D_i},\quad m^\ell(a_1,\dots,a_\ell)=\sum_{j=1}^\ell a_jm^\ell_j,
\]
is $\KK$-linear. 
Therefore the solvability of \eqref{213} in $\KK[V]$ reduces to a simple theorem of linear algebra, which can be rephrased more abstractly as follows:
Let $\alpha\colon\KK^m\to \KK^n$ be a $\KK$-linear map, and suppose $\KK\subset\LL$ is a field extension.
Then
\[
\img(\alpha\otimes_\KK1_\LL)\cap\KK^n=\img(\alpha).
\]
We leave the proof of this to the reader.

\subsection{Rings associated to free divisors}

In this subsection we make some general observations about the algebra presented by the transpose of a Saito matrix of a free divisor. 
Let $D=V(\Delta)$ be a free divisor in $(\CC^\ell,0)$ with Saito matrix $A$.
Then we have an exact sequence
\beq\label{156}
\xymat{0\ar[r]&R^{\ell}\ar[r]^-A&R^\ell\ar[rr]^-{(\Delta_1,\dots,\Delta_\ell)}&&R_D\ar[r]&R_D/J_D\ar[r]&0},
\eeq
where $\Delta_j:=\p\Delta/\p x_j$ for $j=1,\dots,\ell$, and $J_D:=R_DJ_\Delta$ is the Jacobian ideal of $D$. 
Now assume also that $D$ is Euler homogeneous. 
By adding multiples of the Euler vector field $\chi=\delta_1$ to the remaining members $\delta_2,\dots,\delta_\ell$ of a Saito basis of $D$, we may assume that these annihilate $\Delta$. 
We shall assume that $A$ is obtained from such a basis. 
We say that $D$ satisfies (rc) if (rc) holds for $\Lambda=A^t$.
In this case, we write  
\[
\tilde R_D:=M=\coker\Lambda\subset Q_D
\]
for the ring of Theorem~\ref{160}.

It is well known that for any algebraic or analytic space $D$ satisfying Serre's condition S2, the fractional ideal $\End_{R_D}(J_D^\vee)$ is naturally contained in the integral closure of $R_D$ in $Q_D$, and the inclusion $R_D\into\End_{R_D}(J_D^\vee)$ gives a partial normalization (see for example \cite[Ch.~2, \S2; Ch.~6, \S2]{Vas98}.
Grauert and Remmert showed in \cite{GR71} (see also \cite[Ch.~6, \S5]{GR84}) that for analytic spaces, $R_D=\End_{R_D}(J_D^\vee)$ precisely at the normal points of $D$, and the analogous result for algebraic spaces was shown by Vasconcelos in \cite{Vas91}. 

\begin{prp}\label{133} 
If the free divisor $D$ satisfies (rc) then $\tilde R_D\cong\End_{R_D}(J_D)\cong\End_{R_D}(J_D^\vee)$.
\end{prp}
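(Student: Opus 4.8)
The plan is to show that under (rc), all three fractional ideals $\tilde R_D$, $\End_{R_D}(J_D)$ and $\End_{R_D}(J_D^\vee)$ coincide, by identifying each with $\End_{R_D}(\Fit^1_D(M))$. From Theorem~\ref{160} we already know $\tilde R_D\cong\End_{R_D}(\Fit^1_D(M))$, so the whole task reduces to relating the Fitting ideal $\Fit^1_D(M)$ to the Jacobian ideal $J_D$ and then invoking the fact that $\End$ of a maximal Cohen--Macaulay fractional ideal depends only on its class up to multiplication by a unit fractional ideal (equivalently, up to the operation $U\mapsto U^\vee$, which is a duality on maximal Cohen--Macaulay fractional ideals by \cite[Prop.~1.7]{JS90a}).

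The key steps, in order, would be: (1) Apply Lemma~\ref{136} to the Saito matrix $A$ of $D$ (which under (rc), after the left multiplication described in Definition~\ref{78}, has $\Fit^1(M)$ equal to the maximal minors of $A$ with one row deleted) to get the free resolution \eqref{134}, so that $\Fit^1_D(M)$ is a maximal Cohen--Macaulay $R_D$-module of rank $1$, hence a fractional ideal. (2) Use the exact sequence \eqref{156} for the Euler-homogeneous Saito matrix: there the map $R^\ell\to R_D$ is given by the partials $\Delta_1,\dots,\Delta_\ell$ of $\Delta$, which generate $J_D$, and the presenting matrix is again $A$. Since $A$ presents both $R_D/J_D$-extension data and $M$, comparing \eqref{156} with the resolution of $\Fit^1_D(M)$ from step~(1) should show that $J_D$ and $\Fit^1_D(M)$ have presentations by $A$ and $A^t$ respectively; by the Hilbert--Burch theorem each is the ideal of maximal minors of the other's presenting matrix, so $J_D\cong\Fit^1_D(M)^\vee$ (up to a unit, using that both are maximal Cohen--Macaulay of rank $1$ and the $2$-periodicity of the adjugate resolution already used in the proof of Theorem~\ref{160}). (3) Dualize: $J_D^\vee\cong\Fit^1_D(M)^{\vee\vee}\cong\Fit^1_D(M)$, again by \cite[Prop.~1.7]{JS90a}. (4) Conclude $\End_{R_D}(J_D)\cong\End_{R_D}(J_D^\vee)$ because $\End_{R_D}(U)\cong\End_{R_D}(U^\vee)$ for maximal Cohen--Macaulay fractional ideals (both equal $[U:_{Q_D}U]$, and dualizing is an anti-involution), and $\End_{R_D}(J_D^\vee)\cong\End_{R_D}(\Fit^1_D(M))\cong\tilde R_D$ by step~(3) and Theorem~\ref{160}.

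The delicate point — the main obstacle — is step~(2): one must check carefully that the matrix $A$ appearing in \eqref{156} (built from a Saito basis whose non-Euler members annihilate $\Delta$) is, after the harmless left multiplication permitted in (rc), exactly the matrix whose deleted-row maximal minors generate $\Fit^1(M)$, and that the row one deletes in \eqref{134} is compatible with the row whose deletion relates $A$ to the Euler structure. Here the Euler-homogeneity normalization is essential: it is what forces one row of $A$ (or a combination) to encode $\Delta$ itself rather than an arbitrary syzygy, which is what lets the minors of $A$ with a row deleted be the partials $\Delta_j$ up to the Jacobian-ideal identification. I would want to trace through the Hilbert--Burch correspondence for \eqref{156} explicitly to confirm that the cokernel of $A^t$ (which is $M=\tilde R_D$) is dual to $J_D$ and not merely isogenous to it, using Euler-homogeneity to pin down the rank-$1$ twist. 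Once that identification $J_D\cong\Fit^1_D(M)^\vee$ is secured, the remaining equalities are formal consequences of the duality theory of maximal Cohen--Macaulay fractional ideals recalled before Theorem~\ref{160}.
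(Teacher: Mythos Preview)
Your overall strategy matches the paper's: identify $\tilde R_D$ with $\End_{R_D}(\Fit^1_D(M))$ via Theorem~\ref{160}, relate $\Fit^1_D(M)$ to $J_D$, and pass between $\End(J_D)$ and $\End(J_D^\vee)$ by reflexivity. But your step~(2) contains a genuine confusion about the matrix conventions, and this is exactly the ``delicate point'' you flag. Recall that ``(rc) for $D$'' means applying Definition~\ref{78} with the transpose $\Lambda=A^t$ of the Saito matrix playing the role of Definition~\ref{78}'s ``$A$''. Consequently the $\Lambda$ appearing in Lemma~\ref{136} is $(A^t)^t=A$ itself. So Lemma~\ref{136} gives
\[
0\to R^\ell\xrightarrow{\ A\ }R^\ell\xrightarrow{(m^\ell_1,\dots,m^\ell_\ell)}\Fit^1_D(M)\to 0,
\]
which is a presentation by the \emph{same} matrix $A$ as in \eqref{156}. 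Hence $J_D\cong\Fit^1_D(M)$ directly as $R_D$-modules (both are $\coker A$), and Theorem~\ref{160} immediately yields $\tilde R_D\cong\End_{R_D}(J_D)$. The second isomorphism then follows because $J_D$ is maximal Cohen--Macaulay (since $D$ is free), hence reflexive, so dualizing gives $\End_{R_D}(J_D)\cong\End_{R_D}(J_D^\vee)$.

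Your claimed route---presentations by $A$ and $A^t$, hence $J_D\cong\Fit^1_D(M)^\vee$---is both unnecessary and not properly justified: the assertion ``each is the ideal of maximal minors of the other's presenting matrix, so they are dual'' does not follow from Hilbert--Burch alone. Had the presentations genuinely been by $A$ and $A^t$ you would need the $2$-periodicity argument from the proof of Theorem~\ref{160} to extract a duality, not Hilbert--Burch. In short: fix the bookkeeping and the detour through $\Fit^1_D(M)^\vee$ disappears, and the proof collapses to the paper's two-line comparison of presentations.
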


\begin{proof} 
First, recall the well known fact that for $j=1,\dots,\ell$, 
\beq\label{157}
m^1_j=\frac{\Delta_j}{\deg\Delta}.
\eeq
This follows from the fact that by Cramer's rule the logarithmic $1$-form $\omega_1:=\frac1\Delta\sum_{j=1}^\ell m^1_jdx_j$ satisfies 
\[
\langle\omega_1,\delta_j\rangle=
\begin{cases}
1 & \text{if }j=1,\\
0&\text{if }j=2,\dots,\ell,
\end{cases}
\]
as does $\frac{1}{\deg\Delta}\frac{d\Delta}{\Delta}$.

Next, Lemma~\ref{136} yields a presentation 
\[
\xymat{0\ar[r]&R^{\ell}\ar[r]^-{A}&R^\ell\ar[rr]^-{(m^\ell_1,\dots,m^\ell_\ell)}&&F^1_D(M)
\ar[r]&0}.
\]
This coincides with that of $J_D$ in \eqref{156}; it follows that as $R_D$-modules, $\Fit^1_D(M)$ and $J_D$ are isomorphic. 
Hence, by Theorem~\ref{160},
\[
\tilde R_D=\End_{R_D}(\Fit^1_D(M))\cong\End_{R_D}(J_D).
\]
Since $D$ is free, $J_D$ is maximal Cohen--Macaulay, and then reflexive by \cite[Prop.~(1.7) iii)]{JS90a}.
So dualizing induces an isomorphism $\End_{R_D}(J_D)\cong\End_{R_D}(J_D^\vee)$.
\end{proof}

\begin{rmk}
The map $\varphi_1\in\End_{R_D}(\Fit^1_D(M))$ described in the proof of Theorem~\ref{160} gives an explicit isomorphism $\Fit^1_D(M)\cong J_D$.
Indeed, $\varphi_1(m^\ell_j)=\frac{\Delta_j}{\deg\Delta}$ by Lemma~\ref{157}.

However the  following example, of the discriminant of the reflection group $B_3$, shows that, even under the hypotheses of Proposition~\ref{133}, it is not necessarily the case that the other generators $\varphi_i$ of $\End_{R_D}(\Fit^1_D(M))$, $i=2,\dots,\ell$, defined in \eqref{131} are isomorphisms onto their image. 

A Saito matrix for the discriminant $D$ of $B_3$ is given by
\[
A:=
\begin{pmatrix}
x & -4x^2+18y& -xy+27z \\ 
2y & xy+27z & -2y2+18xz \\
3z & 6xz & 6yz  
\end{pmatrix}=\Lambda^t.
\]
Because this satisfies (rc),
\[
\tilde I_D=\ideal{x^2y-4y^2+3xz, x^2z-3yz, xyz-9z^2},
\]
is equal to the ideal of maximal minors of $\Lambda$ with its third column deleted. 
On the other hand the ideal of maximal minors of $A$ with its second column deleted is
\[
\ideal{x^2z-3yz, xyz-9z^2}.
\]
Evidently the two ideals are not isomorphic as $R_D$-modules. 
\end{rmk}

In contrast, for irreducible free divisors we have

\begin{prp}\label{137} 
Assume that in addition to the hypotheses of Proposition~\ref{133}, $D$ is irreducible and is not isomorphic to the Cartesian product of a smooth space with a variety of dimension $<\ell-1$.
Then each of the maps $\varphi_i$ in \eqref{131} is an isomorphism onto its image. 
Let $I_i$ denote the ideal of maximal minors of $A$ with its $i$'th row deleted.
Then, for each $i=1,\dots,\ell$, $R/I_i=R_D/I_iR_D$ is a Cohen--Macaulay ring with support $D_{\Sing}$.
\end{prp}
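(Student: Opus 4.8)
The plan is to combine the identification of $\tilde R_D$ with an endomorphism ring from Proposition~\ref{133} with a careful analysis of the maps $\varphi_i$ defined in \eqref{131}. First I would show that each $\varphi_i$ is a nonzerodivisor in $Q_D$: since $D$ is irreducible, $R_D$ has a single minimal prime, so $Q_D$ is a field (the residue field of that prime localized), and any nonzero element of $\tilde R_D\subset Q_D$ is a unit there; it remains to check $\varphi_i\neq 0$, equivalently that not all $m^i_j$ vanish, which follows because $\Fit^1(M)$ has grade $2$ and hence is not contained in a minimal prime of $R_D$. Multiplication by such a $\varphi_i$ is then injective, and its image is the fractional ideal $\varphi_i\cdot\Fit^1_D(M)$; by \eqref{131} this image is exactly the ideal generated by the $m^i_j$, i.e.\ $I_iR_D = I_{i,D}$. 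So $\varphi_i\colon\Fit^1_D(M)\xrightarrow{\ \sim\ } I_{i,D}$ is the desired isomorphism onto its image, and combined with the isomorphism $\Fit^1_D(M)\cong J_D$ from the proof of Proposition~\ref{133}, each $I_{i,D}$ is isomorphic to $J_D$ as an $R_D$-module.

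\textbf{Cohen--Macaulayness and support.} For the second assertion, the grade-$2$ hypothesis on $\Fit^1(M)=I_\ell$ together with (rc) gives, via the Hilbert--Burch theorem (Lemma~\ref{136}), that $R/I_\ell$ is Cohen--Macaulay of codimension $2$ in $R$, hence of dimension $\ell-2$, and likewise $R/I_i\cong R/I_\ell$ carries the same numerical invariants once one knows $I_i$ and $I_\ell$ generate isomorphic $R_D$-modules — but one must be slightly careful: isomorphism as $R_D$-modules of the ideals does not immediately give isomorphism of the quotient rings $R/I_i$ and $R/I_\ell$. The cleaner route is: (rc) says $\Fit^1(M) = I_\ell$, and the hypothesis that (rc) may be achieved "possibly after left multiplication of $A$ by an invertible matrix" means all the $I_i$ are obtained from one another by such row operations, so in fact $I_i$ and $I_\ell$ coincide up to the automorphism of $R^\ell$ effected by that invertible matrix; hence $I_i = I_\ell$ literally after the appropriate change of basis, and $R/I_i\cong R/I_\ell$ as rings. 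Then $R/I_i$ is Cohen--Macaulay of dimension $\ell-2$.

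\textbf{Identifying the support.} To see $\mathrm{Supp}(R/I_i) = D_{\Sing}$, note $V(I_\ell)=V(\Fit^1(M))$; since $M=\tilde S_\A$ (or $\tilde R_D$) has rank $1$ and is locally free exactly where it is free of rank $1$ over $R_D$, its first Fitting ideal cuts out precisely the non-free locus. By Grauert--Remmert / Vasconcelos as cited before Proposition~\ref{133}, $R_D = \End_{R_D}(J_D^\vee)$ exactly at the normal points of $D$; and the normal points of $D$ are exactly the smooth points once we use the hypothesis that $D$ is not a product of a smooth space with a lower-dimensional variety — this is precisely the condition that excludes $D$ being normal along part of its singular locus. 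Hence the non-normal locus of $D$ equals $D_{\Sing}$, which equals $V(\Fit^1(M)) = V(I_i)$ for each $i$. The dimension count ($\dim R/I_i = \ell-2 = \dim D - 1 = \dim D_{\Sing}$ for an irreducible hypersurface with the stated non-product property) is consistent, so $\mathrm{Supp}(R/I_i) = D_{\Sing}$.

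\textbf{Main obstacle.} The delicate point I expect to be the crux is the transition from "the $I_{i,D}$ are isomorphic as $R_D$-modules" to "$R/I_i$ is actually Cohen--Macaulay with support $D_{\Sing}$" — specifically justifying that (rc) forces each $I_i$ (not just $I_\ell = \Fit^1(M)$) to be a grade-$2$ perfect ideal and that its vanishing locus is exactly $D_{\Sing}$ rather than something larger. Handling this requires using both clauses of Definition~\ref{78} (the grade condition \emph{and} the "up to invertible left multiplication" clause) and the non-product hypothesis on $D$ to pin down that non-normal $=$ singular; without irreducibility and the non-product assumption, $\varphi_i$ could fail to be injective or the support could be a proper subset of $D_{\Sing}$, so these hypotheses must be invoked at exactly the right places.
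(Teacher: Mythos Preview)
Your argument has a gap at the very first step: the claim that $\varphi_i\neq 0$ ``follows because $\Fit^1(M)$ has grade $2$'' is not justified. The grade hypothesis controls the full Fitting ideal $\sum_k I_k=\Fit^1(M)$, not the individual row-ideals $I_i$; nothing in (rc) prevents some particular $I_i$ from lying inside $\ideal{\Delta}$, in which case $I_{i,D}=0$ and $\varphi_i=0$ in $Q_D$. Establishing $I_i\not\subset\ideal{\Delta}$ is in fact the whole content of the paper's proof, and it uses the two extra hypotheses in a way you did not anticipate. By Laplace expansion along the $i$th row, $\Delta=\sum_j A^i_j\,m^i_j\in I_i$, so any $(\ell-1)$-dimensional component of $V(I_i)$ is contained in $D$; irreducibility of $D$ forces such a component to equal $D$, i.e.\ $I_i\subset\ideal{\Delta}$. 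But the non-product hypothesis is precisely what guarantees that every entry $A^i_j$ lies in the maximal ideal $\mm$ (a unit entry would give a nonvanishing logarithmic vector field and hence a smooth factor of $D$); Laplace then yields $\Delta\in\mm\cdot\ideal{\Delta}$, a contradiction. This gives $\grade I_i\geq 2$ directly, after which Lemma~\ref{136} supplies the Hilbert--Burch resolution and Cohen--Macaulayness.

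Your ``cleaner route'' rests on a misreading of Definition~\ref{78}: (rc) says that after \emph{one} fixed invertible left multiplication, \emph{one} deleted-row ideal equals $\Fit^1(M)$. It does not assert that the various $I_i$ are related to one another by row operations, and in general they are different ideals (the $B_3$ remark preceding Proposition~\ref{137} shows that even their $R_D$-module structures can differ when $D$ is reducible). So you cannot transport the grade-$2$ property from $I_\ell$ to $I_i$ this way. Likewise, the non-product hypothesis is not used to argue ``normal $=$ smooth'' for the support; it enters at the Laplace step above.

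Your abandoned first route would in fact succeed \emph{once} $\varphi_i\neq 0$ is known: then $I_{i,D}\cong\Fit^1_D(M)$ is maximal Cohen--Macaulay over $R_D$, and the depth lemma applied to $0\to I_{i,D}\to R_D\to R_D/I_{i,D}\to 0$ (together with $\Delta\in I_i$) gives $R/I_i=R_D/I_{i,D}$ Cohen--Macaulay of dimension $\ell-2$. But since establishing $\varphi_i\neq 0$ already requires the paper's argument or an equivalent, this route is not genuinely independent.
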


\begin{proof} 
Because $\Delta\in I_i$, the $(\ell-1)$-dimensional components of $V(I_i)$ are among the components of $D$. 
Since $\Delta$ is irreducible, the only component possible is $D$ itself. 
But then because $D$ is reduced, we would have $I_i\subset\ideal{\Delta}$. 
This is absurd, for by hypothesis all entries of $A$ lie in the maximal ideal, and $\Delta=\sum_{j=1}^\ell A^i_jm^i_j$.  
Thus $V(I_i)$ is purely $\ell-2$-dimensional. 
From this the result now follows by Lemma~\ref{136}.
\end{proof}

Our Propositions~\ref{133} and \ref{137} are closely related to \cite[Prop.~6.15]{Vas98}:

\begin{prp} 
If $D$ is a free divisor, then 
\beq\label{138}
J_D\cdot\Hom_{R_D}(J_D,R_D)=\Fit^1_D(M).
\eeq
Here both ideals $J_D$ and $\Hom_{R_D}(J_D,R_D)$ are viewed as fractional ideals in $Q_D$.\qed 
\end{prp}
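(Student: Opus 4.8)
The plan is to recast \eqref{138} as a trace-ideal computation for $J_D$. Fix a Saito matrix $A$ of $D$ as in \eqref{156}; that exact sequence presents $J_D$ over $R_D$ as the cokernel of $A$ reduced modulo $\Delta$, with $\ell$ distinguished generators, the images of $\Delta_1,\dots,\Delta_\ell$. Since a matrix and its transpose have the same minors and, by \eqref{156}, $J_D\cong\coker(A\bmod\Delta)$, all three of $\Fit^1_D(M)$, $\Fit^1_D(\coker A)$ and $\Fit^1_D(J_D)$ coincide with the ideal of $R_D$ generated by the $(\ell-1)\times(\ell-1)$-minors of $A$, equivalently by the entries of $\ad A$. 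Thus it suffices to prove $J_D\cdot\Hom_{R_D}(J_D,R_D)=\tau(J_D)$ and $\tau(J_D)=(\text{entries of }\ad A)R_D$, where $\tau(J_D):=\sum_f f(J_D)\subseteq R_D$ is the trace ideal, the sum running over $f\in\Hom_{R_D}(J_D,R_D)$. (If $D$ is smooth the statement is vacuous, since then $J_D=R_D$ and the $(\ell-1)$-minors of $A$ generate the unit ideal; otherwise $J_D$ contains a non-zero-divisor, being a rank-one ideal of a reduced Noetherian ring not contained in any minimal prime, so it is a genuine fractional ideal.)

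The first equality is formal. Via the identification $\Hom_{R_D}(J_D,R_D)=[R_D\colon_{Q_D}J_D]$ of \eqref{112}, evaluation of a homomorphism corresponds to multiplication by the associated element of $Q_D$, so
\[
\tau(J_D)=\sum_f f(J_D)=\sum_{q\in[R_D\colon_{Q_D}J_D]}qJ_D=[R_D\colon_{Q_D}J_D]\cdot J_D,
\]
which is precisely the product of the two fractional ideals appearing in \eqref{138}.

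For the second equality I would compute $\tau(J_D)$ from the presentation $R_D^\ell\xrightarrow{A}R_D^\ell\to J_D\to 0$. Applying $\Hom_{R_D}(-,R_D)$ identifies $\Hom_{R_D}(J_D,R_D)$ with $\ker(A^t\bmod\Delta)$, the isomorphism sending $f$ to the vector whose $i$-th coordinate is $f$ evaluated on the $i$-th generator of $J_D$; hence $\tau(J_D)$ is the ideal of $R_D$ generated by all coordinates of all vectors in $\ker(A^t\bmod\Delta)$. The key step is the self-duality of the matrix factorization $(A,\ad A)$ of $\Delta$: because $A\cdot\ad A=\ad A\cdot A=\Delta\cdot I$ with $\Delta$ a non-zero-divisor in the regular ring $R$, one has $\ker(A^t\bmod\Delta)=\img((\ad A)^t\bmod\Delta)$. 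Indeed $\supseteq$ follows from $A^t(\ad A)^t=(\ad A\cdot A)^t=\Delta I\equiv 0$, and for $\subseteq$, if $A^tv=\Delta w$ in $R^\ell$ then left-multiplying by $\ad(A^t)=(\ad A)^t$ gives $\Delta v=\Delta(\ad A)^tw$, so $v=(\ad A)^tw$ since $\Delta$ is a non-zero-divisor. Therefore $\tau(J_D)$ is generated by the entries of $(\ad A)^t$ modulo $\Delta$, i.e.\ by the entries of $\ad A$ modulo $\Delta$, which we have already identified with $\Fit^1_D(M)$. Combining the two equalities gives \eqref{138}.

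The argument is essentially bookkeeping once the trace-ideal viewpoint is adopted; the one place demanding care, and where I expect the only real content to lie, is the self-duality step $\ker(A^t\bmod\Delta)=\img((\ad A)^t\bmod\Delta)$ — that is, making explicit that the $2$-periodic complex used in the proof of Theorem~\ref{160} stays exact under $\Hom_{R_D}(-,R_D)$, which is exactly what lets the minors of $A$ reappear as the trace ideal of $J_D$. This reproves in our setting the cited result of Vasconcelos.
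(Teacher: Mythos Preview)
Your argument is correct. Note, however, that the paper does not give a proof of this proposition at all: it is quoted from \cite[Prop.~6.15]{Vas98} and closed with \qed. So there is nothing in the paper to compare against beyond the citation.

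What you have written is essentially the standard trace-ideal proof underlying Vasconcelos's result, specialized to the free-divisor setting. The identification $\Fit^1_D(M)=\Fit^1_D(J_D)$ via \eqref{156} is correct (same minors for $A$ and $A^t$), the equality $J_D\cdot[R_D:_{Q_D}J_D]=\tau(J_D)$ is the fractional-ideal reformulation of evaluation, and the computation of $\tau(J_D)$ from the presentation $R_D^\ell\xrightarrow{\bar A}R_D^\ell\to J_D\to0$ via the self-duality $\ker(\bar A^t)=\img(\overline{\ad A})^t$ is exactly the $2$-periodic matrix-factorization argument already invoked in the proof of Theorem~\ref{160}. Your verification of that self-duality (lift, multiply by $(\ad A)^t$, cancel $\Delta$) is clean and needs no additional hypotheses beyond $\Delta$ being a non-zero-divisor in $R$.
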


The left hand side of \eqref{138} is the so-called \emph{trace ideal} of $J_D$; it is the set
\[
\{\varphi(g)\mid\varphi\in\Hom_{R_D}(J_D, R_D), g\in J_D\}.
\]

Buchweitz, Ebeling and Graf von Bothmer give a criterion under which, for a free divisor $D$ appearing as the discriminant in the base-space of a versal deformation of a singularity, the ring $\End_{R_D}(J_D)$ coincides with the normalization $\bar R_D$ of $R_D$:

\begin{prp}[{\cite[Thm.~2.5, Rmk.~2.6]{BEG09}}]
If $D\subset S$ is the discriminant in the smooth base-space of a versal deformation $f\colon X\to S$ and the module of $f$-liftable vector fields in $\Der_S$ is free, then provided $\codim_Sf(X_{\Sing})\geq 2$, this module coincides with $\Der(-\log D)$. 
If in fact $\codim_Sf(X_{\Sing})\geq 3$, then $\End_{R_D}(J_D)=\bar R_D$.
\end{prp}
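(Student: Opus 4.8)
Write $\mathrm{Lift}_f\subseteq\Der_S$ for the module of $f$-liftable vector fields. For both assertions the plan is the same: compare the object in question with a ``classical'' one on the open set $U:=S\ssm f(X_{\Sing})$, over which $X$ is smooth, and then use Serre's condition $S2$ to propagate the comparison across $f(X_{\Sing})$.

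For the first assertion I would proceed in three steps. First, $\mathrm{Lift}_f\subseteq\Der(-\log D)$: the local flow of a lift $\tilde\eta\in\Der_X$ of $\eta\in\mathrm{Lift}_f$ is $f$-compatible, hence preserves the critical locus of $f$ and so its image $D$, whence $\eta$ is tangent to $D$. Second, both modules are reflexive --- $\Der(-\log D)$ because it is the module of logarithmic vector fields of a reduced divisor in a smooth (hence normal) space, and $\mathrm{Lift}_f$ because it is free by hypothesis --- so each equals $j_*j^*$ of itself, $j$ being the inclusion of the complement of an arbitrary closed subset of codimension $\ge2$. Third, $\mathrm{Lift}_f=\Der(-\log D)$ over $U$: there $X$ is smooth, so near a critical point of $f$ the fibre is cut out in a smooth germ by $\dim S$ equations and, by versality together with the standing hypothesis that the deformed singularity has isolated singular locus, has an isolated singularity; thus $f$ is locally a versal deformation of an isolated complete intersection singularity, for which the coincidence of liftable and logarithmic vector fields --- as well as the freeness of $D$ --- is classical, due to K.~Saito \cite{Sai80} in the hypersurface case and to Looijenga in general. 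Since $\codim_Sf(X_{\Sing})\ge2$, the first step is an equality over $U$, and the second step then forces it on all of $S$; in particular $\Der(-\log D)$ is free, i.e.\ $D$ is a free divisor.

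For the second assertion, note that $\codim_Sf(X_{\Sing})\ge3$ implies $\ge2$, so the first assertion already shows $D$ is free; hence $J_D$ is a maximal Cohen--Macaulay $R_D$-module, and $\tilde R_D:=\End_{R_D}(J_D)$ --- which by Proposition~\ref{133} is the cokernel $\coker K$ of the Saito matrix, $\coker K$ being a ring in the versal-deformation setting --- is a finite birational $R_D$-algebra with $R_D\subseteq\tilde R_D\subseteq\bar R_D$, the last inclusion by the general $S2$ fact recalled before Proposition~\ref{133}; moreover $\tilde R_D$ is Cohen--Macaulay by \cite[Cor.~3.15]{MP89}, hence $S2$. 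To conclude $\tilde R_D=\bar R_D$ it suffices that $\tilde R_D$ be integrally closed, and, being already $S2$, this reduces by Serre's criterion to regularity in codimension $\le1$. The minimal primes of $\tilde R_D$ lie over the generic points of the components of $D$, where everything is a product of fields; and since $\tilde R_D=R_D$ away from $D_{\Sing}$ (which for a free divisor has pure codimension $1$ in $D$), any height-one prime at which $\tilde R_D$ might fail to be regular contracts to a generic point of a component of $D_{\Sing}$, i.e.\ to a point of codimension $2$ in $S$. By hypothesis $f(X_{\Sing})$ has codimension $\ge3$ in $S$, so it contains no such point; there $f$ is again a versal deformation of an ICIS with smooth total space, and in the classical picture (cf.\ \eqref{113}, Lemma~\ref{194}, Proposition~\ref{133}) $\tilde R_D=\coker K$ is the coordinate ring of the relative critical locus $\Sigma^0$, which is finite and birational over $D$ and smooth at the relevant points, hence realizes the normalization $\bar R_D$ --- in particular $\tilde R_D$ is regular there. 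Thus $\tilde R_D$ is $R1$ and $S2$, hence normal, hence equal to $\bar R_D$.

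The hard part, in both assertions, is the reduction over $U$ to the classical picture for versal deformations of isolated complete intersection singularities: the claim that a critical point of $f$ lying over a point of $U$ has an isolated complete-intersection fibre (using versality and the isolatedness of the deformed singularity's singular locus), together with the known ICIS facts --- freeness of $D$, liftable $=$ logarithmic, and the smoothness of $\Sigma^0$ yielding $\End_{R_D}(J_D)=\bar R_D$ locally --- which I would invoke rather than reprove. The remaining ingredients --- the tangency of the first step, the reflexivity and $S2$ bookkeeping, and Serre's normality criterion --- are formal.
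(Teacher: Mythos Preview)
The paper does not prove this proposition: it is stated with attribution to \cite[Thm.~2.5, Rmk.~2.6]{BEG09} and no argument is given in the present paper. There is therefore no proof here against which to compare your attempt.

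That said, your outline captures the correct mechanism --- extension across $f(X_{\Sing})$ using reflexivity for the first assertion, and Serre's $R1+S2$ criterion together with the Cohen--Macaulayness of $\coker K$ for the second --- and this is indeed the shape of the argument in \cite{BEG09}. Two points to tighten. First, the context of \cite{BEG09} is broader than hypersurface or complete-intersection singularities (it includes, e.g., Cohen--Macaulay codimension-$2$ singularities), so your reduction over $U$ to the ``classical ICIS picture'' is not the right invocation in general; what is actually used is that over $U$ the critical locus is smooth and maps finitely and birationally to $D$, which in \cite{BEG09} follows from the versality and smoothness of $X$ there rather than from ICIS theory. Second, your appeal to Proposition~\ref{133} to identify $\End_{R_D}(J_D)$ with $\coker K$ presupposes (rc); this is legitimate because in the deformation setting $\coker K$ is already a ring (the structure sheaf of the relative critical locus over $D$), whence (rc) holds by Theorem~\ref{162} --- but you should state this implication rather than leave it implicit, to avoid any appearance of circularity.
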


\section{Ring structures associated with Coxeter groups}\label{230}

\subsection{Rank conditions and associated rings}\label{231}

We return to the situation of Section~\ref{80}.
From now on we work in standard coordinates as in \eqref{4}.
Denote by $J_\Delta\subset S$ and $J_{\Delta^2}\subset R$ the gradient ideals of $\Delta$ and $\Delta^2$ respectively, and by
\[
J_\A:=J_\Delta S_\A,\quad J_D:=J_{\Delta^2}R_D
\]
the Jacobian ideals of $\A$ and of $D$ respectively.
Consider the corresponding $1$st Fitting ideals
\begin{equation}\label{71}
I_{\A}:=\Fit_S^1(J_\A),\quad \tilde I_\A:=\Fit_{S_\A}^1(J_\A)=I_\A\cdot S_\A,\quad I_D:=\Fit^1_R(J_D),\quad \tilde I_D:=\Fit^1_{R_D}(J_D)=I_D\cdot R_D.
\end{equation}
By \eqref{4}, \eqref{2} and \eqref{3}, we have exact sequences
\begin{gather}\label{26}
\xymat{0\ar[r]&S^\ell\ar[r]^-{J^t}&S^\ell\ar[r]&J_\A\ar[r]&0},\\
\nonumber\xymat{0\ar[r]&R^\ell\ar[r]^-{K=JJ^t}&R^\ell\ar[r]&J_D\ar[r]&0}.
\end{gather}
The above Fitting ideals $I_\A$ and $I_D$ are generated by the sub-maximal minors of $J$ and $K$ respectively. 
Being Saito matrices, $J^t$ and $K$ have rank $\ell-1$ at smooth points of $\A$ and $D$ respectively. 
Therefore $I_\A$ and $I_D$ are ideals of grade $2$ and $\tilde I_\A$ and $\tilde I_D$ are ideals of grade $1$.

A more precise version of the rank condition (rc) from Definition~\ref{78} holds for $\A$ and $D$: 
\begin{lem}\label{161}
For irreducible $W$, $I_\A$ is generated by the maximal minors of the matrix obtained from $J$ by omitting its $\ell$'th row. 
This is its homogeneous part of minimal degree $\sum_{i<\ell}m_i=\frac{h\ell}{2}-h+1$.
\end{lem}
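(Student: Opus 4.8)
The plan is to deduce the statement from the refined form of the rank condition established in Section~\ref{97}, together with the degree bookkeeping for the exponents of an irreducible Coxeter group; the only genuinely nontrivial input will be the identification of the multiplicative unit of $\tilde S_\A$.

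First I would record the module-theoretic setup. By \eqref{197}--\eqref{208} the map $tp$ has matrix $J$, so $\tilde S_\A=\coker J$ is generated as an $S_\A$-module by the images $[\p_{p_1}],\dots,[\p_{p_\ell}]$ of the standard basis of $\Der_R\otimes_RS$, and the generator $[\p_{p_i}]$ corresponds to the $i$-th row of $J$. Since every entry $\p_{x_j}(p_i)$ of $J$ is homogeneous of positive degree $m_i$ and hence lies in $\mm=\ideal{x_1,\dots,x_\ell}$, this is a minimal presentation, so $[\p_{p_1}],\dots,[\p_{p_\ell}]$ are minimal generators. By Theorem~\ref{206} $\tilde S_\A$ is an $S_\A$-algebra, so Theorem~\ref{162} applies (with $R$ there equal to $S$ here and the free divisor equal to $\A$): the rank condition (rc) holds for $J$. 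More precisely, the remark following Theorem~\ref{162} shows that $I_\A=\Fit^1_S(\tilde S_\A)$ is the ideal of maximal minors of the presentation matrix of $\tilde S_\A$ with the row of the multiplicative unit deleted; so everything reduces to locating that unit among $[\p_{p_1}],\dots,[\p_{p_\ell}]$.

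The heart of the argument, and what I expect to be the main obstacle, is to show that the unit of $\tilde S_\A$ is $[\p_{p_\ell}]$, i.e.\ the generator corresponding to the last row of $J$. Here I would use that the multiplication comes from Dubrovin's Frobenius structure on $V/W$ (Section~\ref{190}), so the unit of $\tilde S_\A$ is the image of the unit vector field $e\in\Der_R$. For irreducible $W$ this field is $w$-homogeneous: since $\Lie_E(e)=-e$ for the Euler field $E=\frac1h\sum_iw_ip_i\p_{p_i}$, which by \eqref{31} is $\frac1h$ times the $w$-grading operator, $e$ has $w$-weight $-h=-w_\ell$. Writing $e=\sum_ig_i\p_{p_i}$ with $g_i\in R=\CC[p_1,\dots,p_\ell]$ forces $g_i$ to be $w$-homogeneous of weight $w_i-w_\ell$, which is negative for $i<\ell$ (as $w_i\le w_{\ell-1}<w_\ell=h$ by \eqref{30}) and hence $g_i=0$, while $g_\ell\in\CC^*$. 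Thus $e\in\CC^*\p_{p_\ell}$ (cf.\ \cite{Dub98}, \cite[Ch.~2]{Her02}), so its image in $\tilde S_\A$ generates the same submodule as $[\p_{p_\ell}]$. Applying the remark after Theorem~\ref{162} with the reordered generating tuple $([\p_{p_\ell}],[\p_{p_1}],\dots,[\p_{p_{\ell-1}}])$, whose presentation matrix is $J$ with its $\ell$-th row moved to the top, now yields that $I_\A$ is generated by the $(\ell-1)\times(\ell-1)$-minors $m^\ell_1,\dots,m^\ell_\ell$ of $J$ obtained by deleting its $\ell$-th row.

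It remains to identify the degree. Row $i$ of $J$ consists of the partials of $p_i$, which are homogeneous of degree $m_i$, so each $m^\ell_j$ is homogeneous of degree $\sum_{i<\ell}m_i$; by \eqref{83} we have $\sum_{i=1}^\ell m_i=\frac{h\ell}{2}$, and $m_\ell=h-1$ by \eqref{30}, whence $\sum_{i<\ell}m_i=\frac{h\ell}{2}-h+1$. Since $I_\A$ is generated by these elements, all of this single degree, we get $(I_\A)_d=0$ for $d<\frac{h\ell}{2}-h+1$ and $(I_\A)_{\frac{h\ell}{2}-h+1}$ is the $\CC$-span of $m^\ell_1,\dots,m^\ell_\ell$; as $I_\A\neq0$ (it has grade $2$), some $m^\ell_j\neq0$, so $\frac{h\ell}{2}-h+1$ is indeed the minimal degree in which $I_\A$ is nonzero. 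Finally, deleting any other row $i<\ell$ of $J$ produces minors of the strictly larger degree $\frac{h\ell}{2}-m_i\ge\frac{h\ell}{2}-m_{\ell-1}>\frac{h\ell}{2}-h+1$, again by \eqref{30}, which confirms that the $\ell$-th row is the distinguished one and that irreducibility of $W$ is genuinely used throughout.
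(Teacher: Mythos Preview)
Your proof is correct, but the route differs from the paper's. The paper's two-line argument invokes Solomon's theorem \cite[Thm.~2, Cor.~(2a)]{Sol64} that all $\ell^2$ sub-maximal minors $m^i_j$ of $J$ are $\CC$-linearly independent; combined with (rc) (known via Theorems~\ref{206} and \ref{162}), which bounds the number of generators of $I_\A$ by $\ell$, a graded Nakayama argument forces the generators to be the $\ell$ minors of lowest degree $D_\ell$, namely the $m^\ell_j$. You instead pin down the $\ell$th row structurally, by identifying the multiplicative unit of $\tilde S_\A$ with the class of $\p_{p_\ell}$ via the $w$-homogeneity of the unit field in Dubrovin's construction and the relation $[E,e]=-e$, and then apply the remark after Theorem~\ref{162} directly. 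Your approach explains \emph{why} the last row is distinguished, but it imports the identification $e\in\CC^*\p_{p_\ell}$ (equivalently, that the F-manifold Euler field coincides with $\tfrac1h\chi_w$) from the Frobenius-manifold literature; this is standard in Dubrovin and Hertling but not established within the paper. The paper's argument, by contrast, is internal once Solomon's independence result and (rc) are granted, and it yields the additional information that the $m^\ell_j$ are $\CC$-linearly independent, which is used again in Section~\ref{7}.
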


\begin{proof}
By a theorem of Solomon~\cite[Thm.~2, Cor.~(2a)]{Sol64} the minors of $J$ are linearly independent over $\CC$. 
As $I_\A$ is generated by $\ell$ minors, these must then be the minors of lowest degree.
\end{proof}
 
\begin{dfn}
For irreducible $W$, we refer to the condition defined in Lemma~\ref{161} as the \emph{graded rank condition} (grc) for $\A$.
Analogously, we say that the (grc) holds for $D$ if $I_D$ is generated by the entries in the $\ell$'th row of $\ad(K)$, once again the maximal minors of the matrix obtained by omitting from $K$ the highest weight vector field $\delta_\ell$.
For reducible $W$, we define (grc) for both $\A$ and $D$ by requiring it, as just defined, for each irreducible summand.
\end{dfn}
In dimension $\ell=2$, (grc) holds trivially for $\A$ and $D$: $I_\A$ and $I_D$ are the graded maximal ideals of $S_\A$ and $R_D$, due to the presence in each case of an Euler vector field.
We shall look at this case in more detail in Subsection~\ref{49}.

By Lemma~\ref{136}, (rc) for $\A$ and $D$ yields exact sequences
\begin{gather}\label{85}
\xymat{0\ar[r]&S^\ell\ar[r]^-{J^t}&S^\ell\ar[r]&\tilde I_\A\ar[r]&0},\\
\nonumber\xymat{0\ar[r]&R^\ell\ar[r]^-K&R^\ell\ar[r]&\tilde I_D\ar[r]&0}.
\end{gather}
The cokernels of the dual maps $J\in\End_S(S^{\ell})$, $K^t=K\in\End_R(R^\ell)$ are the algebras 
\begin{equation}\label{18}
\tilde S_\A=\End_{S_\A}(\tilde I_\A),\quad\tilde R_D=\End_{R_D}(\tilde I_D),
\end{equation}
of Theorem~\ref{206} and of Lemma~\ref{194}, respectively.
Recall that we write $\tilde\A=\Spec \tilde S_\A$ and $\tilde D=\Spec\tilde R_D$. 

\begin{exa}\label{46}
Let $\A$ be the reflection arrangement for $W$ of type $A_1\times\cdots\times A_1$. 
In suitable coordinates this is a normal crossing divisor defined by $\Delta=x_1\cdots x_\ell$.
Then $J=J^t=\diag(x_1,\dots,x_\ell)$ and 
\[
\tilde S_\A=\coker\,J=\CC[x_2,\dots,x_\ell]\oplus\CC[x_1,x_3,\dots,x_\ell]\oplus\cdots\oplus\CC[x_1,\dots,x_{\ell-1}].
\]
\end{exa}

Generalizing this example we have

\begin{lem}\label{28}
The assignments $W\mapsto\tilde S_\A$ and $W\mapsto\tilde R_D$ commute with direct sums (of representations/rings).
\end{lem}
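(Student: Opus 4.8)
The plan is to reduce the statement to the behaviour of the Jacobian matrix $J$ and the inner-product matrix $\Gamma$ under direct sums of reflection representations, and then to read off $\tilde S_\A$ and $\tilde R_D$ from their defining presentations \eqref{85}. Suppose $W=W'\times W''$ acts on $V=V'\oplus V''$, with $S=S'\otimes_\CC S''$, $R=R'\otimes_\CC R''$, and basic invariants $p'_1,\dots,p'_{\ell'}$ of $W'$ together with $p''_1,\dots,p''_{\ell''}$ of $W''$ serving as basic invariants of $W$. In standard coordinates the reducible case of Section~\ref{80} gives $p_1=\sum (x'_i)^2+\sum(x''_j)^2$, so $\Gamma=\Gamma'\oplus\Gamma''=\mathrm{id}$, and since each $p'_i$ depends only on the $x'$-variables and each $p''_j$ only on the $x''$-variables, the Jacobian matrix is block diagonal,
\[
J=\begin{pmatrix}J'\otimes 1&0\\0&1\otimes J''\end{pmatrix},
\]
where $J'\in\End_{S'}((S')^{\ell'})$, $J''\in\End_{S''}((S'')^{\ell''})$, and the tensor factors are extended to $S=S'\otimes_\CC S''$. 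Likewise $K=JJ^t$ is block diagonal with blocks $K'\otimes 1$ and $1\otimes K''$, and $\Delta=\det J=(\det J')(\det J'')=\Delta'\Delta''$, $\Delta^2=(\Delta')^2(\Delta'')^2$.

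First I would record the consequence at the level of the rings $S_\A$ and $R_D$: since $\Delta=\Delta'\Delta''$ with $\Delta'\in S'$, $\Delta''\in S''$ coprime (they involve disjoint sets of variables and are reduced), $\A=\A'\times V''\cup V'\times\A''$ and $\tilde S_\A=\coker J$ is the cokernel of a block-diagonal endomorphism. The key point is the elementary fact that for a block-diagonal map $\mathrm{diag}(\phi',\phi'')$ of free modules, $\coker(\mathrm{diag}(\phi',\phi''))=\coker\phi'\oplus\coker\phi''$; applied here with $\phi'=J'\otimes 1$ and $\phi''=1\otimes J''$ this gives
\[
\tilde S_\A=\coker(J'\otimes 1)\oplus\coker(1\otimes J'')=\bigl(\tilde S_{\A'}\otimes_\CC S''\bigr)\oplus\bigl(S'\otimes_\CC \tilde S_{\A''}\bigr),
\]
using flatness of $S''$ over $\CC$ to commute $\coker$ with $-\otimes_\CC S''$, and similarly on the other summand. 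The same argument verbatim, with $J$ replaced by $K$ and $S$ by $R$, yields $\tilde R_D=(\tilde R_{D'}\otimes_\CC R'')\oplus(R'\otimes_\CC\tilde R_{D''})$. This is precisely the statement that the assignments commute with direct sums, once one notes that the $R$-algebra (resp.\ $S$-algebra) structures on both sides agree: the Frobenius/F-manifold structure of Section~\ref{190} on a product orbit space is the product of the factor structures (the analytic spectrum $L$ of a product is the product of the analytic spectra), so the multiplication on $\coker K$ respects the block decomposition, and the same holds for $\coker J$ by Theorem~\ref{206} and the compatibility \eqref{208}.

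The one genuine point requiring care — the main obstacle — is the compatibility of the \emph{ring} structures, not just the module structures, with the decomposition; the module statement is formal block algebra. For $\tilde R_D$ this follows from the product structure of Dubrovin's F-manifold on a reducible orbit space, which is already implicit in the reducible case of Section~\ref{80} (the homogeneity \eqref{31} holds on each summand separately) and in Hertling's construction. For $\tilde S_\A$ one then invokes Lemma~\ref{211} and diagram \eqref{208} on each factor and checks that the lifted multiplication of Proposition~\ref{200}, being determined outside codimension two by \eqref{202}, is the product multiplication — which is clear since near a generic point of $\A'\times V''$ only the first factor is folded. I would present the module computation in detail and remark that the ring compatibility is inherited from the product F-manifold structure, citing \cite{Her02} and the reducible case of Section~\ref{80}. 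Example~\ref{46} is the special case $W'=\cdots=W''=A_1$, and serves as a sanity check on signs and indexing.
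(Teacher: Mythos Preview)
Your proof is correct but takes a different route from the paper's. The paper works with the identification $\tilde S_\A=\End_{S_\A}(\tilde I_\A)$ from \eqref{18}: from the block structure of $J$ and $\Delta=\Delta'\Delta''$ it first shows that the Fitting ideal satisfies $I_\A=I_{\A'}\Delta''+I_{\A''}\Delta'$, then invokes the auxiliary Lemma~\ref{20} (a bare-hands identification of ideals in a tensor-product quotient) to obtain $\tilde I_\A\cong(\tilde I_{\A'}\otimes_\CC S'')\oplus(S'\otimes_\CC\tilde I_{\A''})$, and finally applies $\End_{S_\A}$ to this direct sum. The advantage of the paper's route is that the ring structure comes for free, since $\End$ of a module is automatically a ring and the identification \eqref{18} is already one of $S_\A$-algebras; no appeal to the product $F$-manifold structure is needed. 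Your approach, computing $\coker J$ directly from the block-diagonal form, is more elementary and transparent at the module level, but then obliges you to argue separately (via Proposition~\ref{200} and the product Frobenius structure) that the multiplications match. Both arrive at the same decomposition, and your flatness step $\coker(J'\otimes 1)=\tilde S_{\A'}\otimes_\CC S''$ is essentially what Lemma~\ref{20} encodes on the ideal side.
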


\begin{proof}
Assume that $W=W'\oplus W''$, and use the analogous notation to refer to the above defined objects with $W$ replaced by $W'$ and $W''$ respectively.
Then $S=S'\otimes_{\CC} S''$, $J$ is a block matrix with blocks $J'$ and $J''$, $\Delta=\Delta'\Delta''$, hence $I_\A=I_{\A'}\Delta''+I_{\A''}\Delta'$ and therefore 
\[
\tilde I_\A\cong \tilde I_{\A'}\otimes_\CC S''\oplus S'\otimes_\CC \tilde I_{\A''}
\]
by the following Lemma~\ref{20}.
Applying $\End_{S_\A}$ yields
\[
\tilde S_\A=\tilde S_{\A'}\otimes_\CC S''\oplus S'\otimes_\CC\tilde S_{\A''}.
\]
This proves the claim for $\A$; an analogous proof works for $D$.
\end{proof}

\begin{lem}\label{20}
Let $f\in K[x]=K[x_1,\dots,x_r]\supset I$, $g\in K[y]=K[y_1,\dots,y_s]\supset J$, and $K[x,y]=K[x_1,\dots,x_r,y_1,\dots,y_s]$.
Then 
\begin{align*}
(Ig+Jf)(K[x,y]/\ideal{fg})&\cong I(K[x]/\ideal{f})\otimes_KK[y]\oplus K[x]\otimes_KJ(K[y]/\ideal{g}),\\
[Pg+Qf]&\leftrightarrow [P]\oplus[Q].
\end{align*}
\end{lem}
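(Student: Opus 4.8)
\textbf{Plan of proof for Lemma~\ref{20}.}

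The plan is to exhibit the claimed isomorphism directly at the level of modules, using the fact that $K[x,y]=K[x]\otimes_K K[y]$ is a flat (indeed free) extension of both $K[x]$ and $K[y]$, and that the principal ideals $\ideal{f}$ and $\ideal{g}$ live in ``independent'' variable sets. First I would observe that $fg$ is a non-zero-divisor in $K[x,y]$ whenever $f\neq 0\neq g$, so all the quotient rings appearing are honest (and, by flatness, $K[x]/\ideal{f}\otimes_K K[y]$ is a subquotient description that behaves well). The module $(Ig+Jf)(K[x,y]/\ideal{fg})$ is by definition the image of the ideal $Ig+Jf\subseteq K[x,y]$ under the projection $K[x,y]\onto K[x,y]/\ideal{fg}$, i.e.\ it is $(Ig+Jf)/\bigl((Ig+Jf)\cap\ideal{fg}\bigr)$.

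The key step is to compute $(Ig+Jf)\cap\ideal{fg}$. Since $K[x,y]=\bigoplus$ (monomials in $y$) $\cdot K[x]$ is $K[x]$-free and $g\in K[y]$, multiplication by $g$ on $K[x,y]$ is injective, and similarly for $f$; moreover $f$ and $g$ form a regular sequence in $K[x,y]$ because they involve disjoint variables (this is the standard fact that a polynomial in $x$ and a polynomial in $y$ are a regular sequence, using flatness of $K[x,y]$ over $K[x]$ and over $K[y]$). From the regular-sequence property one gets $\ideal{f}\cap\ideal{g}=\ideal{fg}$ and, more precisely, for $P,Q\in K[x,y]$ the element $Pg+Qf$ lies in $\ideal{fg}$ if and only if $P\in\ideal{f}$ and $Q\in\ideal{g}$. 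Applying this with $P$ ranging over representatives in $Ig$ and $Q$ over representatives in $Jf$ (and writing $I\cdot K[x,y]$ for $I$ extended, etc.), one deduces
\[
(Ig+Jf)\cap\ideal{fg}=\bigl((I\cap\ideal{f})\,g\bigr)+\bigl((J\cap\ideal{g})\,f\bigr)+\ideal{fg},
\]
where $I\cap\ideal{f}$ is taken inside $K[x]$ and $J\cap\ideal{g}$ inside $K[y]$, again because the two variable sets are disjoint. Hence the quotient splits as
\[
(Ig+Jf)(K[x,y]/\ideal{fg})\cong\frac{I g}{(I\cap\ideal{f})g+\ideal{fg}}\;\oplus\;\frac{Jf}{(J\cap\ideal{g})f+\ideal{fg}},
\]
and each summand is then identified: the first is $\bigl(I/(I\cap\ideal{f})\bigr)\otimes_K K[y]=I(K[x]/\ideal{f})\otimes_K K[y]$ (multiplication by $g$ being injective and $K[y]$-linear, and $\ideal{fg}\cap Ig=(I\cap\ideal{f})g$ by the regular-sequence argument), and symmetrically the second is $K[x]\otimes_K J(K[y]/\ideal{g})$. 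Tracking an element $Pg+Qf$ through these identifications sends its class to $[P]\oplus[Q]$, which is exactly the stated correspondence; that it is well defined and bijective is then immediate from the computation of the intersection.

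The main obstacle, and the one point that deserves care rather than a one-line dismissal, is the regular-sequence claim and its consequence that $Pg+Qf\in\ideal{fg}\Rightarrow P\in\ideal f,\ Q\in\ideal g$ together with the ``disjoint variables'' identities $\ideal{fg}\cap Ig=(I\cap\ideal f)g$. All of this reduces to the flatness of $K[x,y]$ over $K[x]$ (a free module on the monomials in $y$) and over $K[y]$: flatness gives $(I\cap\ideal f)\otimes_K K[y]=(I\,K[x,y])\cap(\ideal f\,K[x,y])$ and injectivity of the relevant multiplication maps. Once these standard commutative-algebra facts are in place the rest is bookkeeping, and I would present it as such rather than expanding every intersection computation.
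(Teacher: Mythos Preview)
Your argument is correct in substance and is essentially an unpacking of the paper's one-line proof (``One easily verifies that the given correspondence is well-defined in both directions''). The paper simply asserts that the two assignments $[Pg+Qf]\mapsto[P]\oplus[Q]$ and $[P]\oplus[Q]\mapsto[Pg+Qf]$ are well-defined; the only nontrivial point is the forward direction, and the algebraic input needed there is exactly your regular-sequence observation that $Pg+Qf\in\ideal{fg}$ forces $P\in\ideal f$ and $Q\in\ideal g$. So your approach and the paper's coincide in content; you have chosen to phrase it as ``compute the kernel and split'' rather than ``check both directions'', which is fine.

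One small slip: in your displayed formula
\[
(Ig+Jf)\cap\ideal{fg}=\bigl((I\cap\ideal f)\,g\bigr)+\bigl((J\cap\ideal g)\,f\bigr)+\ideal{fg},
\]
the final summand $\ideal{fg}$ should be dropped: $\ideal{fg}$ is not in general contained in $Ig+Jf$ (nothing forces $f\in I$ or $g\in J$), so it cannot appear on the right. The correct identity is simply $(I^e\cap\ideal f^e)g+(J^e\cap\ideal g^e)f$, which your regular-sequence argument actually proves. This does not affect the rest of your proof, since in the quotient you are working modulo $\ideal{fg}$ anyway.
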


\begin{proof}
One easily verifies that the given correspondence is well-defined in both directions.
\end{proof}

\subsection{Relation of rings for $\A$ and $D$}\label{117}

Let us assume now that $W$ is irreducible.
Then the algebras $\tilde S_\A$ and $\tilde R_D$  can be described more explicitly as follows.
We denote by
\begin{equation}\label{14}
(m^i_j):=\ad(J^t),\quad (M^i_j):=\ad(K)=\ad(J^t)\ad(J)
\end{equation}
the adjoint matrices of $J^t$ and $K$ respectively, and set
\beq\label{215}
D_k=\deg(m^k_j)=\sum_{i=1}^\ell m_i-m_k.
\eeq
Abbreviating $h_i:=\varphi^\A_i\in Q_A$ and $g_i:=\varphi^D_i\in Q_D$ for $i=1,\dots,\ell$, Theorem~\ref{160} reads
\begin{gather}
\label{13}
h_im^\ell_j=m^i_j,\quad g_iM^\ell_j=M^i_j,\quad i,j=1,\dots,\ell,\\
\label{15}
\tilde S_\A=\ideal{h_1,\dots,h_\ell}_{S_\A}=S_\A[h_1,\dots,h_{\ell-1}],\quad \tilde R_D=\ideal{g_1,\dots,g_\ell}_{R_D}=R_D[g_1,\dots,g_{\ell-1}].
\end{gather}

\begin{prp}\label{12}
If $W$ is irreducible then 
\[
h_i=\frac{\p_{p_i}(\Delta^2)}{\p_{p_\ell}(\Delta^2)}\in Q_\A^W.
\]
\end{prp}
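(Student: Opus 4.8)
The plan is to compute the quantities $h_i$ directly from their defining relations \eqref{13}, $h_i m^\ell_j = m^i_j$, by relating the subminors $m^i_j$ of $J^t$ to partial derivatives of $\Delta^2$ with respect to the basic invariants. First I would use Saito's theorem \eqref{9} together with $\det J = \Delta$ to write $\Delta^2$ both as a polynomial in $p_\ell$ and as $(\det J)^2$. The key bridge is the chain rule: since $\Delta^2 \in R = \CC[p_1,\dots,p_\ell]$, we have $\p_{x_k}(\Delta^2) = \sum_{i=1}^\ell \p_{p_i}(\Delta^2)\, \p_{x_k}(p_i) = \sum_{i=1}^\ell \p_{p_i}(\Delta^2)\, J^i_k$, i.e.\ the row vector $(\p_{x_1}(\Delta^2),\dots,\p_{x_\ell}(\Delta^2))$ equals $(\p_{p_1}(\Delta^2),\dots,\p_{p_\ell}(\Delta^2))\cdot J$.

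Next I would bring in Cramer's rule in the form $J\cdot\ad(J) = \det(J)\cdot \mathrm{id} = \Delta\cdot\mathrm{id}$. On the other hand, $\Delta^2 = (\det J)^2$, and by \eqref{2} the Saito matrix of $\A$ is $\Gamma J^t$, which in standard coordinates \eqref{4} is just $J^t$; the entries $\p_{x_k}(\Delta^2)$ are, up to the factor from differentiating a square, expressible via $\Delta$ times subminors of $J$. More precisely, since $\Delta = \det J$, we have $\p_{x_k}(\Delta) = \sum_j (\text{cofactor}) \cdot \p_{x_k}(J\text{-entries})$, but the cleaner route is: $\p_{x_k}(\Delta^2) = 2\Delta\,\p_{x_k}(\Delta)$, and one knows (this is essentially \eqref{157} applied to $\A$, combined with the fact that $\Delta_k/\deg\Delta = m^1_k$ for the Saito matrix $J^t$ of $\A$) that the partials of $\Delta$ are proportional to the entries of the first row of $\ad(J^t)$. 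I would instead work on the $D$-side: combining the row-vector identity above with Cramer's rule for $J$, we get $(\p_{p_1}(\Delta^2),\dots,\p_{p_\ell}(\Delta^2)) = \frac{1}{\Delta}(\p_{x_1}(\Delta^2),\dots,\p_{x_\ell}(\Delta^2))\cdot\ad(J)$. Now substitute $\p_{x_k}(\Delta^2) = 2\Delta \cdot (\text{first row of }\ad(J^t))_k = 2\Delta\, m^1_k$, so $\p_{p_i}(\Delta^2) = 2\sum_k m^1_k\, (\ad J)^k_i = 2\,(\ad(J^t)\ad(J))^1_i = 2 M^1_i$ — wait, that gives the $D$-side generators $g_i$. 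Let me instead feed this back through $J^t$: we want $m^i_j$ expressed via $\p_{p}(\Delta^2)$, so I would transpose and use $\ad(J^t)\cdot J^t = \Delta\cdot\mathrm{id}$ to isolate $\ad(J^t)$ in terms of $J^t$ and the vector of partials.

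Concretely, the relation $h_i = m^i_j/m^\ell_j$ should follow once I show $m^i_j = \p_{p_i}(\Delta^2)\cdot c_j$ for some $c_j$ independent of $i$ (namely $c_j$ proportional to $m^\ell_j/\p_{p_\ell}(\Delta^2)$). The identity $\p_{x_k}(\Delta^2) = \sum_i \p_{p_i}(\Delta^2) J^i_k$ says the vector $(\p_{p_i}(\Delta^2))_i$ lies in the image of $J^t$ acting appropriately; since $J^t$ has a one-dimensional cokernel over $Q_\A$ generated compatibly with the $m^\ell_j$, and since $(\p_{x_k}(\Delta^2))_k$ is (by the $\A$-side version of \eqref{157}) a scalar multiple of $(m^1_k)_k$, I can solve for $(\p_{p_i}(\Delta^2))_i$ and identify it, up to a common scalar in $Q_\A$, with the row $(m^i_j)_i$ for any fixed $j$ — because all rows of $\ad(J^t)$ are proportional in $Q_\A$ (rank-one cokernel), with ratios exactly the $h_i$. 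Taking the ratio of the $i$-th and $\ell$-th components kills the common scalar and yields $h_i = \p_{p_i}(\Delta^2)/\p_{p_\ell}(\Delta^2)$. Finally I would check $h_i \in Q_\A^W$: both $\p_{p_i}(\Delta^2)$ and $\p_{p_\ell}(\Delta^2)$ are polynomials in $p_1,\dots,p_\ell$, hence $W$-invariant, so their ratio lies in $Q_\A^W$, and it is a genuine element of $Q_\A$ since $\p_{p_\ell}(\Delta^2)$ is a non-zero-divisor (it is monic of degree $\ell-1$ in $p_\ell$ by Saito's theorem \eqref{9}, in particular nonzero).

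The main obstacle I expect is making rigorous the step "all rows of $\ad(J^t)$ are proportional over $Q_\A$ with ratios $h_i$, and the vector of $p$-partials of $\Delta^2$ is the corresponding multiple." This requires knowing that $\ad(J^t)$ has rank exactly one over $Q_\A$ (true because $\coker J^t = \tilde I_\A$ has rank one, equivalently $J^t$ has rank $\ell-1$ generically, which was noted after \eqref{26}) and carefully tracking which common scalar appears. The cleanest formulation is probably: from $J^t \cdot \ad(J^t) = \Delta\,\mathrm{id}$ one gets, over $Q_\A$, that every column of $\ad(J^t)$ is annihilated by the corank-one map $J^t/\Delta$... but it is the rows we need; so instead use $\ad(J^t)\cdot J^t = \Delta\,\mathrm{id}$, i.e.\ the rows of $\ad(J^t)$ span a rank-one space, and the relation $\sum_i \p_{p_i}(\Delta^2)\,J^i_k = \p_{x_k}(\Delta^2) = 2\Delta\,m^1_k = 2\sum_i m^1_i (J^t)^i_k$ (using the $\A$-version of \eqref{157}: $m^1_k = \p_{x_k}(\Delta)/\deg\Delta$, so $2\Delta m^1_k = \tfrac{2}{\deg\Delta}\p_{x_k}(\Delta^2)\cdot\tfrac{\deg\Delta}{2}$ — I will need to reconcile the constant, which is harmless in a ratio) exhibits $(\p_{p_i}(\Delta^2))_i$ and $(m^1_i)_i$ as two vectors with the same image under $J^t$, hence equal up to the kernel; but $J^t$ is injective (left column of \eqref{26}), so they are literally proportional as rows of $\ad(J^t)$-type vectors only after passing to $Q_\A$ where the rank-one structure lets me conclude $\p_{p_i}(\Delta^2) = \lambda\, m^i_j / \text{(something)}$. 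Once that bookkeeping is done, the result drops out; everything else is routine chain rule and Cramer's rule.
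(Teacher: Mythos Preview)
Your core idea is correct: the chain rule gives $J^t\cdot(\p_{p_i}(\Delta^2))_i=(\p_{x_k}(\Delta^2))_k=2\Delta(\p_{x_k}\Delta)_k\equiv 0$ in $S_\A$, and Cramer's rule gives $J^t\cdot(m^i_j)_i\equiv 0$ for each $j$; since $J^t$ has generic rank $\ell-1$ on $\A$, its kernel over $Q_\A$ is one-dimensional, so the column $(m^i_j)_i$ is a $Q_\A$-multiple of $(\p_{p_i}(\Delta^2))_i$, and normalizing via $h_\ell=1$ and the non-zero-divisor $\p_{p_\ell}(\Delta^2)$ (Theorem~\ref{9}) gives the formula. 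The $W$-invariance is then immediate. So your argument works once stripped of its detours (the excursion to $M^1_i$, and the row/column confusion in the last paragraph---it is the \emph{columns} of $\ad(J^t)$ that live in $\ker_{Q_\A}J^t$, and this is exactly what you need).

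The paper's proof reaches the same identity $\p_{p_i}(\Delta^2)\,m^\ell_j\equiv\p_{p_\ell}(\Delta^2)\,m^i_j\bmod S\Delta$ by a shorter route: write $2\Delta\,d\Delta=\sum_k\p_{p_k}(\Delta^2)\,dp_k$ in $\Omega^1_S$, wedge with $dp_1\wedge\cdots\wedge\widehat{dp_i}\wedge\cdots\wedge dp_{\ell-1}$ to kill all but the $k=i$ and $k=\ell$ terms, and then read off the coefficient of $dx_1\wedge\cdots\wedge\widehat{dx_j}\wedge\cdots\wedge dx_\ell$ (which is exactly the cofactor $m^\bullet_j$). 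This is the same chain rule plus cofactor content as yours, but packaged so that the proportionality constant is computed directly rather than inferred from a rank-one kernel argument; it avoids any need to localize or invoke generic rank.
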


\begin{proof}
First, differentiate $\Delta^2\in R$,
\[
2\Delta d\Delta=d(\Delta^2)=\sum_{k=1}^\ell\p_{p_k}(\Delta^2)dp_k
\]
considered as an equality in $\Omega^1_S$.
Then wedging with $dp_1\wedge\dots\wedge\widehat{dp_i}\wedge\dots\wedge dp_{\ell-1}$ gives
\[
(-1)^{i-1}\p_{p_i}(\Delta^2)dp_1\wedge\dots\wedge dp_{\ell-1}+
(-1)^{\ell-1}\p_{p_\ell}(\Delta^2)dp_1\wedge\dots\wedge\widehat{dp_i}\wedge\dots\wedge dp_\ell\equiv 0 \mod S\Delta.
\]
Taking coefficients with respect to $dx_1\wedge\dots\wedge\widehat{dx_j}\wedge\dots\wedge dx_\ell$ yields
\[
\p_{p_i}(\Delta^2)m^\ell_j\equiv\p_{p_\ell}(\Delta^2)m^i_j\mod S\Delta,\quad j=1,\dots,\ell.
\]
By Theorem~\ref{9}, $\p_{p_\ell}(\Delta^2)$ is a non-zero-divisor in $S_\A$, and the claim follows from \eqref{13}.
\end{proof}

Using Theorem~\ref{8}, one verifies that the averaging operator \eqref{17} induces a commutative diagram of $R$-modules
\[
\xymat{
Q_D\ar@{^(->}[r] & Q_\A\ar[r]^-\# & Q_\A^W & Q_D\ar^-\cong[l] & \\
\tilde R_D\ar@{^(-->}[r]\ar@{^(->}[u] &\tilde S_\A\ar@{-->}[r]^-\#\ar@{^(->}[u] & \tilde S_\A^W\ar@{^(->}[u] & \tilde R_D\ar@{-->}[l]^-\cong\ar@{_(->}[u]\\
R_D\ar@{^(->}[u]\ar@{^(->}[r] & S_\A\ar[r]^-\#\ar@{^(->}[u] & (S_\A)^W\ar@{^(->}[u] & R_D\ar@{^(->}[u]\ar[l]^-\cong
}
\]
where the dashed maps result from the following proposition.

\begin{prp}\label{27}
We have
\begin{equation}\label{25}
h_i=g_i=\frac{M^i_\ell}{M^\ell_\ell}\in Q_D,
\end{equation} 
and hence
\begin{equation}\label{42}
\tilde R_D=(\tilde S_\A)^W.
\end{equation}
\end{prp}

\begin{proof}
Using \eqref{14} we have $M^i_\ell=\sum_rm^i_rm^\ell_r$.
By \eqref{13}, this is equal to $h_i\sum_rm^\ell_rm^\ell_r$ and therefore to $h_iM^\ell_\ell.$
By \cite[Thm.~3.4]{MP89}, $M^\ell_\ell$ generates the conductor of $R_D\into\tilde R_D$ and is therefore not a zero-divisor on $R_D$ or $S_{\A}$. 
Therefore, $h_i=M^i_\ell/M^\ell_\ell=g_i$ by \eqref{13} and \eqref{42} follows using \eqref{15}.
\end{proof}

\subsection{Local trivialization}

The integral varieties of $\Der(-\log\A)$ and $\Der(-\log D)$ form Saito's logarithmic stratification defined in \cite[\S3]{Sai80}, which we denote by $L(\A)$ and $L(D)$ respectively.
We shall locally trivialize $\tilde\A$ and $\tilde D$ along logarithmic strata with slices of the same type, with $W$ replaced by the subgroup fixing the strata.
In the case of $\tilde\A$ the trivialization is algebraic, while in the case of $\tilde D$ we need to work in the analytic category.

We begin with the discussion of $\tilde\A$.
The logarithmic stratification $L(\A)$ coincides, up to taking the closure of strata, with the intersection lattice of $\A$.
It is a geometric lattice (ordered by reverse inclusion) whose rank function is given by the codimension in $V$.
By $L_k(\A)\subset L(\A)$, we denote the collection of all rank $k$ elements.

\begin{dfn}\label{183}
For $X\in L(\A)$, denote by $W_X$ the subgroup of $W$ generated by reflections with reflecting hyperplanes in the localization $\A_X:=\{H\in\A\mid X\subset H\}$ of $\A$ along $X\in L(\A)$, and by $\Delta_X$ the reduced defining equation of $\A_X$. 
We denote also by $I_X$ the defining ideal of $X$ in $S_\A$.
For $x\in V$, let $X(x)$ be the stratum $X\in L(\A)$ with $x\in X$.
\end{dfn}

By \cite[Thm.~1.12~(d)]{Hum90}, $W_X$ is the group fixing $X$ point-wise, that is
\[
W_X=\bigcap_{x\in X}W_x.
\]
It follows that 
\[
W_{X(x)}=W_x
\]
is the isotropy group of $x$.

\begin{prp}\label{21}
If $X\in L(\A)$ then $(\tilde S_\A)_{I_X}=(\tilde S_{\A_X})_{I_X}=\tilde S_{\A_X/X}\otimes_\CC\CC(X)$.
In particular, $\tilde S_{\A_X/X}=(\tilde S_\A)_{I_X}^{X}$ where $X$ is considered as a translation group. 
\end{prp}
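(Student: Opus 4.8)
The plan is to localize the defining exact sequence \eqref{85} of $\tilde S_\A$ at the prime ideal $I_X$ and to identify the result. First I would observe that localization at $I_X$ is exact, so $(\tilde S_\A)_{I_X}=\coker\bigl(J\colon (S_\A)_{I_X}^\ell\to(S_\A)_{I_X}^\ell\bigr)$, and similarly $(\tilde S_{\A_X})_{I_X}=\coker\bigl(J_X\colon (S_{\A_X})_{I_X}^\ell\to(S_{\A_X})_{I_X}^\ell\bigr)$ where $J_X$ is a Jacobian matrix of basic invariants for $W_X$. The key geometric input is that the hyperplanes $H\in\A$ not containing $X$ are units in $S_{I_X}$, so that $\Delta$ and $\Delta_X$ differ by a unit in $S_{I_X}$; hence $\A$ and $\A_X$ have the \emph{same} germ along $X$, i.e.\ $(S_\A)_{I_X}=(S_{\A_X})_{I_X}$. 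To compare the two presentations I would use that, after a $W_X$-equivariant orthogonal splitting $V=X^\perp\oplus X$ (available by complete reducibility and the invariant inner product from \eqref{4}), one may choose basic invariants for $W$ adapted to this splitting: the invariants of $W_X$ on $X^\perp$ together with linear coordinates on $X$. With such a choice $J$ becomes block-triangular with one block the Jacobian matrix of the $W_X$-invariants and the complementary block an identity block coming from the coordinates on $X$; the identity block contributes nothing to the cokernel, so $\coker J\cong\coker J_X'$ where $J_X'$ is the Jacobian of the $W_X$-invariants on $X^\perp$. This gives the first equality $(\tilde S_\A)_{I_X}=(\tilde S_{\A_X})_{I_X}$ and reduces the computation to $W_X$ acting on $X^\perp\cong\CC^{\codim X}$.

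Next I would treat the second equality $(\tilde S_{\A_X})_{I_X}=\tilde S_{\A_X/X}\otimes_\CC\CC(X)$. Here $\A_X\subset V$ is the pullback to $V=X^\perp\oplus X$ of the arrangement $\A_X/X\subset X^\perp$ of $W_X$, so $S_{\A_X}=S_{\A_X/X}\otimes_\CC\CC[X]$ and the matrix $J_X'$ of $W_X$ (in coordinates on $X^\perp$, extended trivially over $X$) has entries in $S_{\A_X/X}$ only. Localizing at $I_X=\ideal{\text{coordinates on }X^\perp}$ inverts everything coming from $\CC[X]$ away from $0$, i.e.\ tensors with $\CC(X)$, the fraction field of $\CC[X]$, while the $X^\perp$-directions are localized at their graded maximal ideal; since $\tilde S_{\A_X/X}$ is the cokernel of $J_X'$ over $S_{\A_X/X}$ and this is already a local computation at the origin of $X^\perp$ (all entries of $J_X'$ lie in the maximal ideal, being homogeneous of positive degree), no further localization in the $X^\perp$-variables is needed. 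Hence $(\tilde S_{\A_X})_{I_X}=\coker(J_X')\otimes_\CC\CC(X)=\tilde S_{\A_X/X}\otimes_\CC\CC(X)$. The final sentence is then immediate: taking invariants under the translation group $X$ kills the $\CC(X)$-factor (more precisely, $\bigl(\tilde S_{\A_X/X}\otimes_\CC\CC(X)\bigr)^X=\tilde S_{\A_X/X}$ since $X$ acts only on the $\CC(X)$-factor through translations, whose invariants are the constants $\CC$), yielding $\tilde S_{\A_X/X}=(\tilde S_\A)_{I_X}^X$.

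I expect the main obstacle to be the bookkeeping in the block-triangular reduction: one must be careful that a $W_X$-equivariant choice of basic invariants adapted to the splitting $V=X^\perp\oplus X$ really does make $J$ block-triangular with an identity block, and that passing to the cokernel genuinely discards that block (this uses that the corresponding rows/columns of the adjugate behave correctly, or equivalently that $\coker\begin{pmatrix}J_X'&0\\ *&I\end{pmatrix}\cong\coker J_X'$, which is elementary but needs the off-diagonal entries to not interfere — they do not, since adding the identity block allows us to clear the $*$ by row/column operations over the localized ring). A secondary subtlety is verifying that the graded/local computation defining $\tilde S_{\A_X/X}$ over the polynomial ring $S_{\A_X/X}$ agrees with the localization at the graded maximal ideal, which is standard for graded modules but should be stated. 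Everything else — exactness of localization, units coming from hyperplanes off $X$, complete reducibility of $W_X$ on $V$ and the resulting orthogonal splitting, and the behavior of translation invariants — is routine given the results already recalled in Sections~\ref{80} and~\ref{190}.
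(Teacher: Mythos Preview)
There is a genuine gap in your argument for the first equality. You claim that ``one may choose basic invariants for $W$ adapted to this splitting: the invariants of $W_X$ on $X^\perp$ together with linear coordinates on $X$.'' But basic invariants for $W$ must be $W$-invariant, not merely $W_X$-invariant; linear coordinates on $X$ are not $W$-invariant, and neither are the $W_X$-invariants on $X^\perp$ in general. Concretely, for $W$ of type $A_2$ and $X$ a reflecting hyperplane, the degrees of basic $W$-invariants are $2,3$, whereas your recipe would produce invariants of degrees $2,1$. So the block-triangular form you assert for $J$ never arises this way, and the passage $\coker J\cong\coker J_X'$ is unjustified.

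The repair is a chain-rule argument rather than a change of $W$-invariants. Choose basic invariants $q_1,\dots,q_\ell$ for $W_X$ (those on $X^\perp$ together with linear coordinates on $X$), with Jacobian $J_X$. Since each $p_i$ is $W_X$-invariant, it is a polynomial in the $q_j$, and $J=J'\cdot J_X$ with $J'=(\partial p_i/\partial q_j)$. Then $\det J'=\det J/\det J_X$ is, up to a nonzero constant, $\Delta/\Delta_X$, a product of linear forms defining hyperplanes in $\A\setminus\A_X$; this is a unit in $S_{I_X}$, so $J'$ is invertible there and $\coker J\cong\coker J_X$ after localizing at $I_X$. With this correction the rest of your outline goes through. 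The paper avoids the issue altogether by working with the intrinsic description $\tilde S_\A=\End_{S_\A}(\tilde I_\A)$, where $\tilde I_\A$ is a Fitting ideal of the Jacobian \emph{ideal} $J_\A$: since $J_\A$ depends only on the divisor, the product rule applied to $\Delta=\Delta_X\cdot(\text{unit in }S_{I_X})$ gives $(J_\A)_{I_X}=(J_{\A_X})_{I_X}$ directly, and the Fitting and $\End$ constructions localize.
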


\begin{proof}
Fix $X\in L(\A)$ and let $Y$ be its orthogonal complement.
By $\Delta_X\in\CC[Y]$ we denote the defining equation of $\A_X$.
Then, by the product rule,
\[
(J_\A)_{I_X}=J_\Delta(S_{I_X}/S_{I_X}\Delta)=J_{\Delta_X}(S_{I_X}/S_{I_X}\Delta_X)=(J_{\A_X})_{I_X}.
\]
Localizing a presentation, such as \eqref{26}, at $I_X$, therefore shows that 
\begin{align*}
(I_\A)_{I_X}=(\Fit^1_S(J_\A))_{I_X}&=\Fit^1_{S_{I_X}}((J_\A)_{I_X})\\
&=\Fit^1_{S_{I_X}}((J_{\A_X})_{I_X})=(\Fit^1_S(J_{\A_X}))_{I_X}=(I_{\A_X})_{I_X}.
\end{align*}
Then we have also $(\tilde I_\A)_{I_X}=(\tilde I_{\A_X})_{I_X}$ and finally,
\begin{align*}
(\tilde S_\A)_{I_X}=(\End_{S_\A}(I_\A))_{I_X}&=\End_{S_{I_X}}((I_\A)_{I_X})\\
&=\End_{S_{I_X}}((I_{\A_X})_{I_X})=(\End_{S_{\A_X}}(I_{\A_X}))_{I_X}=(\tilde S_{\A_X})_{I_X}.
\end{align*}
This proves the first equality; the second follows since $S_{I_X}=\CC[Y]\otimes_\CC\CC(X)$.
\end{proof}

\begin{cor}
The assignment $\A\mapsto\tilde S_\A$ is a local functor.\qed
\end{cor}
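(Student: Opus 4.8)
Here the phrase \emph{local functor} is used in the sense standard in the theory of hyperplane arrangements: the assignment $\A\mapsto\tilde S_\A$ is to commute with localization along the flats, so that for every $X\in L(\A)$ the localization of $\tilde S_\A$ at the generic point of $X$ is computed purely from the localized arrangement $\A_X$, and more precisely depends only on its essentialization $\A_X/X$. The plan is to read this off directly from Proposition~\ref{21}, which already records exactly the required compatibility; the corollary is thus essentially a reformulation.

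First I would fix the ambient setup: objects are central arrangements, and for $X\in L(\A)$ the localization $\A\mapsto\A_X$ is the relevant morphism, composition being governed by the partial order on $L(\A)$ through the identity $(\A_X)_{X'}=\A_{X'}$, valid whenever $X\subseteq X'$. On this data, $\A\mapsto\tilde S_\A$ is understood together with the natural localization maps $\tilde S_\A\to(\tilde S_\A)_{I_X}$. Locality then unwinds into the two statements of Proposition~\ref{21}: the identification $(\tilde S_\A)_{I_X}=(\tilde S_{\A_X})_{I_X}$, which shows that the localization at $X$ is intrinsic to $\A_X$; and the splitting $(\tilde S_\A)_{I_X}=\tilde S_{\A_X/X}\otimes_\CC\CC(X)$, which exhibits the local model as the essential model $\tilde S_{\A_X/X}$ tensored with a trivial factor $\CC(X)$ coming from the translations along $X$. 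Compatibility along the lattice is automatic: for $X\subseteq X'$,
\[
(\tilde S_{\A_X})_{I_{X'}}=(\tilde S_{(\A_X)_{X'}})_{I_{X'}}=(\tilde S_{\A_{X'}})_{I_{X'}}
\]
by a second application of Proposition~\ref{21}, using $(\A_X)_{X'}=\A_{X'}$, transitivity of localization of rings, and the behaviour of the $\End$-construction of \eqref{18} under localization established in the proof of Proposition~\ref{21}.

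I expect the only obstacle to be bookkeeping rather than mathematics: one must make the notion of ``local functor'' precise enough to coincide with the output of Proposition~\ref{21} — in particular, to pin down the morphisms of the ambient category of arrangements and to verify that $\A\mapsto\tilde S_\A$, equipped with its localization maps, is functorial with respect to them. Once the definitions are aligned, Proposition~\ref{21} supplies everything and no further computation is required.
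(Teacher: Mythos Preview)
Your proposal is correct and matches the paper's approach: the paper gives no proof beyond the \qed symbol, treating the corollary as an immediate restatement of Proposition~\ref{21}. Your write-up simply unpacks what ``local functor'' means and verifies that Proposition~\ref{21} supplies exactly the required compatibility with localization along flats, which is precisely the intended reading.
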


We now turn our attention to $\tilde D$. 
The following result holds for any free divisor, and our proof is not specific to our situation.

\begin{prp}\label{72}
The ideals $I_\A$ and $I_D$ are stable under $\Der(-\log\A)$ and $\Der(-\log D)$ respectively.
In particular, the latter act naturally on $\tilde S_\A$ and $\tilde R_D$ respectively.
\end{prp}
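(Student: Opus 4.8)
The plan is to prove that $I_\A$ is stable under $\Der(-\log\A)$, the statement for $I_D$ being entirely analogous with $\A$, $J^t$ replaced by $D$, $K$. Recall that $I_\A = \Fit^1_S(J_\A)$ is generated by the $(\ell-1)\times(\ell-1)$ minors of the Saito matrix $J^t$ of $\A$, equivalently by the entries $m^i_j$ of $\ad(J^t)$. So it suffices to show that for every $\theta \in \Der(-\log\A)$ and every $i,j$, the derivative $\theta(m^i_j)$ lies in $I_\A$ modulo $S\Delta$. The starting point is the classical fact that $\Der(-\log\A)$ acts on the module $\Der(-\log\A)$ itself by the Lie bracket: if $\theta, \delta \in \Der(-\log\A)$ then $[\theta,\delta] \in \Der(-\log\A)$. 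Writing $\delta_1,\dots,\delta_\ell$ for the Saito basis of $\Der(-\log\A)$ (the columns of $J^t$), we get $[\theta,\delta_k] = \sum_m c^m_k \delta_m$ for some $c^m_k \in S$; equivalently, the matrix $C = (c^m_k)$ satisfies $\theta(J^t) = C J^t - J^t B$ for a suitable matrix $B$ recording how $\theta$ acts on the coordinate basis $\p_{x_1},\dots,\p_{x_\ell}$ (here $\theta(J^t)$ means entrywise application of the derivation $\theta$).

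From this matrix identity one computes $\theta(\ad(J^t))$ using the product rule for $\ad$ and the relation $J^t \ad(J^t) = \Delta \cdot \id$. Differentiating that relation gives $\theta(\Delta)\id = \theta(J^t)\ad(J^t) + J^t\,\theta(\ad(J^t))$, and substituting $\theta(J^t) = CJ^t - J^tB$ yields, after multiplying through by $\ad(J^t)$ on the left and using $\ad(J^t)J^t = \Delta\,\id$ again, an expression of the form $\Delta\,\theta(\ad(J^t)) = (\text{matrix with entries in }S)\cdot\ad(J^t) + \ad(J^t)\cdot(\text{matrix with entries in }S) - \theta(\Delta)\,\ad(J^t)$. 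Since $\theta \in \Der(-\log\A)$, we have $\theta(\Delta) = \alpha\Delta$ for some $\alpha \in S$. Hence every term on the right side has entries in the ideal generated by the $m^i_j$ (i.e. in $I_\A$), and after cancelling the non-zero-divisor $\Delta$ — which is legitimate because $I_\A$ has grade $2$, so $\Delta$ is a non-zero-divisor modulo nothing relevant, or more carefully because the identity holds in $S$ and $\Delta$ is a non-zero-divisor in $S$ — we conclude $\theta(m^i_j) \in I_\A$. Therefore $I_\A$ is stable under $\Der(-\log\A)$, and passing to $S_\A = S/S\Delta$ we get the induced action on $\tilde I_\A$, hence on $\tilde S_\A = \End_{S_\A}(\tilde I_\A)$ by functoriality of $\End$ under derivations.

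The main obstacle is the bookkeeping in the matrix computation: one must be careful about left versus right multiplication by $\ad(J^t)$, and about the fact that cancelling $\Delta$ requires the intermediate identity to be an honest equality in $S$ (not merely modulo $S\Delta$) before one divides. An alternative, perhaps cleaner, route avoids matrices entirely: interpret $I_\A$ as the $0$-th Fitting ideal of $\tilde S_\A$, or as the conductor-type ideal cutting out the non-free locus of $\Der(-\log\A)$ — more precisely, $I_\A$ (up to radical, and in fact as an ideal by Lemma~\ref{161}) describes where the Saito matrix drops rank, which is the singular locus of $\A$ together with its logarithmic stratification data. Since $\Der(-\log\A)$ is tangent to every logarithmic stratum of $\A$ by construction, and Fitting ideals of a module are intrinsic, any vector field preserving the module $J_\A$ (equivalently, preserving $\A$ and acting on $S_\A$) must preserve all its Fitting ideals; and $\theta \in \Der(-\log\A)$ does act on $S_\A$ since $\theta(S\Delta) \subseteq S\Delta$, and it acts on $J_\A$ because $\theta$ preserves the Jacobian ideal up to the chain rule correction, which lands back in $J_\A$. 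This functoriality argument reduces the whole proposition to the single observation that a derivation preserving a finitely presented module preserves its Fitting ideals, which is standard. I would present the matrix computation as the main argument since it is self-contained and gives the result for both $I_\A$ and $I_D$ uniformly, and remark on the functorial interpretation.

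\begin{proof}
It suffices to treat $\A$; the argument for $D$ is identical, replacing $(\Delta,\A,J^t)$ by $(\Delta^2,D,K)$.

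Let $\delta_1,\dots,\delta_\ell$ be the Saito basis of $\Der(-\log\A)$, so that $J^t=(\delta_j(x_i))$, and let $\ad(J^t)=(m^i_j)$. Recall $I_\A=\Fit^1_S(J_\A)$ is generated by the entries $m^i_j$, and $J^t\ad(J^t)=\ad(J^t)J^t=\Delta\cdot\id$.

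Fix $\theta\in\Der(-\log\A)$. For a matrix $P=(P^i_j)$ over $S$, write $\theta(P):=(\theta(P^i_j))$. Since $\Der(-\log\A)$ is closed under the Lie bracket, $[\theta,\delta_k]=\sum_m c^m_k\delta_m$ for some $c^m_k\in S$; writing $C=(c^m_k)$ and letting $B$ be the matrix with $\theta(\p_{x_j})$-data, a direct computation of $[\theta,\delta_k]$ in the basis $\p_{x_1},\dots,\p_{x_\ell}$ gives
\[
\theta(J^t)=CJ^t-J^tB.
\]
Apply $\theta$ to $J^t\ad(J^t)=\Delta\cdot\id$ and use $\theta(\Delta)=\alpha\Delta$ with $\alpha\in S$ (as $\theta\in\Der(-\log\A)$):
\[
\theta(J^t)\ad(J^t)+J^t\theta(\ad(J^t))=\alpha\Delta\cdot\id.
\]
Multiply on the left by $\ad(J^t)$ and use $\ad(J^t)J^t=\Delta\cdot\id$:
\[
\ad(J^t)\theta(J^t)\ad(J^t)+\Delta\,\theta(\ad(J^t))=\alpha\Delta\,\ad(J^t).
\]
Substituting $\theta(J^t)=CJ^t-J^tB$ and again $\ad(J^t)J^t=\Delta\cdot\id$,
\[
\Delta\,\theta(\ad(J^t))=\alpha\Delta\,\ad(J^t)-\Delta\,C\ad(J^t)-\ad(J^t)C\ad(J^t)+\ad(J^t)J^tB\ad(J^t),
\]
hence, noting the last term is $\Delta B\ad(J^t)$,
\[
\Delta\,\theta(\ad(J^t))=\Delta\bigl(\alpha\,\ad(J^t)-C\ad(J^t)+B\ad(J^t)\bigr)-\ad(J^t)C\ad(J^t).
\]
Every entry of the right-hand side lies in $I_\A$, since each term is a product involving $\ad(J^t)$. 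As $\Delta$ is a non-zero-divisor in $S$, we may cancel it: $\theta(m^i_j)\in I_\A$ for all $i,j$. Thus $\theta(I_\A)\subseteq I_\A$.

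Finally, $\theta$ preserves $S\Delta$, so it induces a derivation of $S_\A$, under which the image $\tilde I_\A$ of $I_\A$ is stable; hence $\Der(-\log\A)$ acts on $S_\A$ preserving $\tilde I_\A$, and therefore acts naturally on $\tilde S_\A=\End_{S_\A}(\tilde I_\A)$ by $\theta(\varphi):=\theta\circ\varphi-\varphi\circ\theta$. The same argument with $K$ in place of $J^t$ gives the statement for $\tilde R_D$.
\end{proof}
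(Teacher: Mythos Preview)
Your approach via adjugate calculus is genuinely different from the paper's, which instead uses the dual basis $\omega_1,\dots,\omega_\ell\in\Omega^1(\log D)$ and the Cartan formula to compute $d\omega_j(\delta_k,\Delta^2\p_{p_i})\equiv\delta_k(M^i_j)\pmod{I_D}$. Both routes work for any free divisor; yours is more elementary, the paper's more intrinsic. The paper also records, as a remark, a transcendental argument close to your ``functorial'' aside: Fitting ideals of $J_D$ are invariant under automorphisms of $D$, hence under the flow of any $\theta\in\Der(-\log D)$.

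There is, however, a genuine slip in your execution. After substituting $\theta(J^t)=CJ^t-J^tB$ you should obtain
\[
\ad(J^t)\,\theta(J^t)\,\ad(J^t)=\ad(J^t)CJ^t\ad(J^t)-\ad(J^t)J^tB\ad(J^t)=\Delta\bigl(\ad(J^t)C-B\ad(J^t)\bigr),
\]
using $J^t\ad(J^t)=\ad(J^t)J^t=\Delta\cdot\id$ once on each summand. This gives directly
\[
\theta(\ad(J^t))=\alpha\,\ad(J^t)-\ad(J^t)C+B\ad(J^t),
\]
whose entries visibly lie in $I_\A$, and no cancellation is needed. Your displayed formula instead carries a spurious term $-\ad(J^t)C\ad(J^t)$ not divisible by $\Delta$, and the subsequent step ``cancel $\Delta$ since it is a non-zero-divisor in $S$'' is invalid as stated: from $\Delta X$ having entries in $I_\A$ one cannot conclude that $X$ does---indeed $\Delta\in I_\A$ by Laplace expansion, so the hypothesis is vacuous. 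With the corrected simplification the argument is complete and your proof stands.
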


\begin{proof}
Let $\omega_1,\dots,\omega_\ell\in\Omega^1(\log D)$ be the dual basis of \eqref{32}.
From
\[
R\ni d\omega_j(\delta_k,\delta_r)=d\omega_j\left(\delta_k,\sum_{i=1}^\ell K^i_r\p_{p_i}\right)=\sum_{i=1}^\ell K^i_rd\omega_j(\delta_k,\p_{p_i}),
\]
\eqref{19} and Cramer's rule, we conclude that 
\begin{align*}
I_D\ni d\omega_j(\delta_k,\Delta^2\p_{p_i})
&=\delta_k\ideal{\Delta^2\omega_j,\p_{p_i}}-\Delta^2\p_{p_i}\ideal{\omega_j,\delta_k}-\ideal{\omega_j,[\delta_k,\Delta^2\p_{p_i}]}\\
&=\delta_k(M^i_j)+\ideal{\Delta^2\omega_j,[\p_{p_i},\delta_k]-\frac{\delta_k(\Delta^2)}{\Delta^2}\p_{p_i}}\\
&\equiv\delta_k(M^i_j)\mod I_D.
\end{align*}
This proves the claim for $D$; the same argument works for $\A$ and any free divisor.
\end{proof}

\begin{rmk}
There is a transcendental argument which shows that for any divisor $D$, free or not, $\Der(-\log D)$ preserves the ideal $I_k(D)$ of $k\times k$ minors of the matrix of coefficients of a set of generators of $\Der(-\log D)$. 
It is simply that each of these ideals is invariant under biholomorphic automorphisms of $D$, since  they are Fitting ideals of the Jacobian ideal $J_D$. 
The integral flow of any vector field $\zeta\in\Der(-\log D)$ preserves $D$, and hence  $I_k(D)$, from which it follows that $\zeta\cdot I_k(D)\subset I_k(D)$.
\end{rmk}

We can improve on Proposition~\ref{21} in the analytic category.
Let $x\in X\in L(\A)$ and $y=p(x)\in p(X)=Y$.
By \cite[\S2]{Orl89}, $Y\in L(D)$ and $p\colon X\to Y$ is a covering.
By finiteness of $W$, there is a (Euclidean) $W_X$-stable neighborhood of $x$, in which the $W$-orbits are exactly the $W_X$-orbits. 
Note that $W_X$ commutes with the translation group $X$.
This gives
\[
p_x=p_{W_X,x}\times p|_X\colon V_x=(V/X)_x\times X_x\to ((V/X)/W_X)_y\times Y_y.
\]
Since our definition of $\tilde R_D$ in \eqref{71} and \eqref{18} is compatible with passing to the analytic category, we obtain the following analytic localization statement.

\begin{prp}\label{24}
Let $x\in X\in L(\A)$ and $y=p(x)\in p(X)=Y\in L(D)$, and denote by $D_Y$ the discriminant of $W_X$ on $V/X$.
Then there is an isomorphism of analytic germs $\tilde D_y\cong\tilde D_{Y,y}\times Y_y$.\qed
\end{prp}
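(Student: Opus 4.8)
\textbf{Proof proposal for Proposition~\ref{24}.}

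The plan is to factor the map $p$ locally at $x$ into a product and then apply the compatibility of the construction of $\tilde R_D$ with products, exactly as we did for $\tilde S_\A$ in Proposition~\ref{21}, but now in the analytic category. First I would fix $x\in X\in L(\A)$ and pass to a small Euclidean neighborhood $U$ of $x$ which is $W_X$-stable and in which the $W$-orbits coincide with the $W_X$-orbits; such a $U$ exists by finiteness of $W$ and the fact (cited from \cite[Thm.~1.12~(d)]{Hum90}) that $W_X=\bigcap_{x\in X}W_x$ is precisely the isotropy group of every point of $X$. On $U$, the quotient map $p$ therefore agrees with the quotient by $W_X$, and since $W_X$ acts trivially on $X$ and commutes with the translation group $X$, the action splits off the factor $X$: writing $V=(V/X)\oplus X$ as a $W_X$-stable orthogonal decomposition, the germ of $p$ at $x$ is the product
\[
p_x = p_{W_X,x}\times \id_{X}\colon (V/X)_x\times X_x \to \bigl((V/X)/W_X\bigr)_y\times X_y,
\]
where $\id_X$ plays the role of $p|_X$ (a covering, by \cite[\S2]{Orl89}, but here trivialized since we have restricted to $U$). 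Correspondingly the arrangement germ is $\A_x=(\A_X\text{-part})\times X$ and the discriminant germ is $D_y=D_{Y,y}\times Y_y$, where $D_Y$ is the discriminant of $W_X$ acting on $V/X$.

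Next I would track the Saito matrix and the Fitting ideal through this product. The Jacobian matrix $J$ of $p_x$ is, in suitable coordinates respecting the decomposition, a block matrix $\diag(J_{W_X},\,\mathrm{Id})$, so the symmetric Saito matrix $K=JJ^t$ of $D$ at $y$ is $\diag(K_{W_X},\,\mathrm{Id})$, with $\Delta^2=\Delta_X^2$ up to a unit (the $X$-directions contributing a unit since $p|_X$ is étale on $U$). Lemma~\ref{20}, applied to the decomposition of variables, then gives that the first Fitting ideal $I_D$ localizes as the ideal built from $I_{D_Y}$ together with the (unit) contribution from the étale factor; concretely the germ of $\tilde I_D$ at $y$ is identified with the germ of $\tilde I_{D_Y}$ at $y$ pulled back along the projection, i.e.\ $(\tilde I_D)_y\cong(\tilde I_{D_Y})_y\otimes_\CC\OO_{Y_y}$ in the analytic sense. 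Applying $\End$ — which commutes with localization and with the étale base change in the $Y$-directions — yields
\[
\tilde R_{D,y}=\bigl(\End_{R_D}(\tilde I_D)\bigr)_y\cong \End_{\OO_{D_{Y},y}}(\tilde I_{D_Y})_y\,\wh\otimes\,\OO_{Y_y}
=\tilde R_{D_Y,y}\,\wh\otimes\,\OO_{Y_y},
\]
which on $\Spec$ gives exactly the asserted isomorphism of analytic germs $\tilde D_y\cong\tilde D_{Y,y}\times Y_y$. The remark just before the statement — that $\tilde R_D$ is defined by \eqref{71} and \eqref{18} in a way compatible with passing to the analytic category — is what licenses doing all of this with analytic local rings rather than polynomial rings.

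The main obstacle, and the reason this is phrased for the analytic and not the algebraic category, is producing the clean product factorization of $p$ near $x$: globally on $V$ the fibers of $p\colon X\to Y$ are finite of degree $[W:W_X]$ and the arrangement does not split as a product, so one genuinely needs to shrink to a $W_X$-stable Euclidean neighborhood on which the $W$-action and the $W_X$-action agree. Once that local picture is in place, the algebra is routine: it is the analytic analogue of the localization computation in the proof of Proposition~\ref{21}, combined with Lemma~\ref{20} for the behavior of the relevant ideals under products, plus the elementary fact that both forming the first Fitting ideal and forming $\End$ of a maximal Cohen--Macaulay fractional ideal commute with étale base change and with localization. I would also remark that since $\tilde R_D$ is Gorenstein (Theorem~\ref{105}) and $\tilde R_{D_Y}$ likewise, the isomorphism is consistent with the product of a Gorenstein germ and a smooth germ being Gorenstein, which is a useful sanity check but not needed for the proof.
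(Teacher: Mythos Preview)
Your proposal is correct and follows essentially the same route as the paper: the paper's entire argument is the paragraph immediately preceding the statement (note the \qed), which produces the local product factorization $p_x=p_{W_X,x}\times p|_X$ from a $W_X$-stable Euclidean neighborhood and then appeals to the compatibility of the definitions \eqref{71}, \eqref{18} with the analytic category. Your write-up simply spells out the algebra the paper leaves implicit; one small remark is that invoking Lemma~\ref{20} is unnecessary here, since with an identity block the Saito matrix $\diag(K_{W_X},\mathrm{Id})$ has $\coker K_{W_X}\otimes\OO_{Y_y}$ as cokernel directly, and Fitting ideals are module invariants.
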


\begin{rmk}
Saito~\cite[(3.6)]{Sai80} showed that one can always analytically trivialize the logarithmic stratification along logarithmic strata as we do in Proposition~\ref{24}. 
\end{rmk}

\begin{cor}
$\tilde\A$ is (algebraically) and $\tilde D$ (analytically) constant over logarithmic strata.\qed
\end{cor}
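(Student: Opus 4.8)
The plan is to read the statement off directly from the two localization results just established: Proposition~\ref{21} handles $\tilde\A$ in the algebraic category, and Proposition~\ref{24} handles $\tilde D$ in the analytic category. In both cases the content is that, as the base point ranges over a fixed logarithmic stratum, the partial normalization decomposes locally as a product of the stratum with a single transverse model, and that this transverse model depends only on data attached to the stratum — the localized arrangement $\A_X$, equivalently the isotropy group $W_X$ together with its representation on $V/X$ — and not on the chosen point.

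\emph{The divisor $\A$.} Fix a logarithmic stratum $X\in L(\A)$, with prime ideal $I_X\subset S_\A$. Proposition~\ref{21} gives $(\tilde S_\A)_{I_X}\cong\tilde S_{\A_X/X}\otimes_\CC\CC(X)$, where $\CC(X)$ is the function field of $X$; geometrically this says that, over the generic point of $X$, $\tilde\A$ is the product of $X$ with the fixed space $\Spec\tilde S_{\A_X/X}$. The latter is manufactured from the central arrangement $\A_X/X$ in $V/X$ alone, hence from $W_X$ with its action on $V/X$; by \cite[Thm.~1.12~(d)]{Hum90} this is exactly the isotropy datum of any point $x$ in the open dense part of $X$, and it does not change as $x$ moves within $X$. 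Since the modules entering the construction are coherent over $S_\A$, the product decomposition valid at the generic point of $X$ spreads out to a Zariski-dense open subset $U\subseteq X$ — the complement of the lower strata — over which $\tilde\A$ restricts to $\Spec\tilde S_{\A_X/X}\times U$. As $X$ was arbitrary, $\tilde\A$ is algebraically constant over logarithmic strata; this is also what the locality of the functor $\A\mapsto\tilde S_\A$ encodes.

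\emph{The divisor $D$.} Let $Y\in L(D)$ be a logarithmic stratum. By \cite[\S2]{Orl89} there is a unique $X\in L(\A)$ with $p(X)=Y$, and $p|_X\colon X\to Y$ is a covering, so the pair $W_X$, $V/X$ is intrinsically attached to $Y$ and is constant along it. Applying Proposition~\ref{24} at an arbitrary $y\in Y$ with preimage $x\in X$ yields an isomorphism of analytic germs $\tilde D_y\cong\tilde D_{Y,y}\times Y_y$, where $D_Y$ is the discriminant of $W_X$ acting on $V/X$; the germ $\tilde D_{Y,y}$ is the germ at the origin of the fixed space $\tilde D_Y$ associated with $W_X$, $V/X$, independent of $y\in Y$. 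Thus, germ by germ along $Y$, the space $\tilde D$ is the product of $Y$ with one and the same transverse model, which is precisely the assertion that $\tilde D$ is analytically constant over logarithmic strata.

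\emph{Main point.} The substantive work is entirely contained in Propositions~\ref{21} and~\ref{24}, and the argument above is bookkeeping. The only things to verify are that the transverse models $\tilde S_{\A_X/X}$ and $\tilde D_Y$ genuinely depend only on the stratum, which is immediate from their definition in terms of $\A_X$, respectively $W_X$, constant along the stratum by \cite[Thm.~1.12~(d)]{Hum90}; and, in the algebraic case only, the spreading-out of the generic-point isomorphism of Proposition~\ref{21} to a dense open subset of $X$, which is the routine argument for coherent sheaves. Neither presents a real obstacle.
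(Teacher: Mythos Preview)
Your proposal is correct and follows precisely the route the paper intends: the Corollary is stated with an immediate \qed, as a direct consequence of Propositions~\ref{21} and~\ref{24}, and your write-up is simply the natural unpacking of that implication. The only extra step you add --- spreading out the generic-point isomorphism of Proposition~\ref{21} to the open stratum --- is routine (indeed the proof of Proposition~\ref{21} works verbatim after localizing at any point of the open stratum, since $\Delta/\Delta_X$ is a unit there), so nothing beyond the paper's implicit argument is needed.
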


By \cite[\S1.8]{Hum90}, $W$ acts simply transitively on the (simple) root systems and on the Weyl chambers. 
Choosing a simple root system defining a Weyl chamber of which $X=X(x)$ is a face, shows that the Dynkin diagram of any isotropy group $W_x=W_X$ is obtained by dropping from the Dynkin diagram of $W$ the roots which are not orthogonal to $X$. 
By \cite[Prop.~2.2]{Hum90}, the connected components of the resulting  Dynkin diagram are in bijection with the irreducible factors of $W_x$.
This discussion combined with Propositions~\ref{21} and \ref{24} proves

\begin{thm}\label{61}\pushQED{\qed}
Let $X\in L(\A)$ and let $Y=p(X)$.
Let $W_1,\dots,W_r$ be the irreducible Coxeter groups whose Dynkin diagrams are the connected components of the sub-diagram of the Dynkin diagram of $W$ formed by the vertices corresponding to simple roots orthogonal to $X$.
Let $\A_1,\dots,\A_r$ and $D_1,\dots,D_r$ be their reflection arrangements and discriminants, and let $\ell_i$ be the dimension of the standard representation of $W_i$. 
Then the algebraic localization of $\A$ along $X$, and the analytic localization of $\tilde D$ along $Y$, are isomorphic, respectively, to 
the disjoint unions 
\[
\bigsqcup _{i=1}^r\tilde\A_i\times\CC^{\ell-\ell_i}\quad\quad\text{and}\quad\quad \bigsqcup_{i=1}^r\tilde D_i\times \CC^{\ell-\ell_i}.\qedhere
\]
\end{thm}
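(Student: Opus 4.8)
The plan is to assemble Theorem~\ref{61} as essentially a bookkeeping consequence of the two localization results already in hand, Proposition~\ref{21} for $\tilde\A$ and Proposition~\ref{24} for $\tilde D$, together with the multiplicativity over direct sums established in Lemma~\ref{28}. The only genuinely new content to verify is the combinatorial identification of the localized arrangement $\A_X$ (resp. the localized discriminant) with the reflection arrangement (resp. discriminant) of the product group $W_1\times\cdots\times W_r$, and the matching of the ambient affine factors.

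First I would fix $x\in X$ with $X=X(x)\in L(\A)$, so that $W_X=W_x$ is its isotropy group by \cite[Thm.~1.12(d)]{Hum90}. By the discussion preceding the theorem (choosing a simple system for a Weyl chamber having $X$ as a face, and invoking \cite[Prop.~2.2]{Hum90}), the Dynkin diagram of $W_x$ is the full subdiagram on the simple roots orthogonal to $X$, and its connected components are precisely the diagrams of $W_1,\dots,W_r$; hence $W_x\cong W_1\times\cdots\times W_r$ as reflection groups, acting on $V/X\cong\CC^{\ell_1}\oplus\cdots\oplus\CC^{\ell_r}$ with $\sum_i\ell_i=\codim X$. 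Correspondingly the localized arrangement $\A_X$, viewed in $V/X$, is the reflection arrangement of $W_1\times\cdots\times W_r$, and its discriminant image is the discriminant $D_Y$ of $W_x$ on $V/X$.

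Next I would feed this into the localization statements. Proposition~\ref{21} gives $\tilde S_{\A_X/X}=\tilde S_{\A_X}$ computed for the group $W_1\times\cdots\times W_r$ acting on $V/X$; Lemma~\ref{28} then splits this as $\bigoplus_i$ of the pieces for $W_i$, and on the level of spectra, $\Spec$ of a finite direct sum of $\CC$-algebras is a disjoint union, so $\widetilde{\A_X/X}=\bigsqcup_{i=1}^r\tilde\A_i$. Crossing with the translation factor $X\cong\CC^{\ell-\codim X}=\CC^{\ell-\sum_j\ell_j}$ — and noting that the $i$-th summand already naturally lives over a $\CC^{\ell_1+\cdots+\widehat{\ell_i}+\cdots+\ell_r}$ coming from the other irreducible factors of $V/X$, which together with $X$ gives a total trivial factor $\CC^{\ell-\ell_i}$ — yields the algebraic localization $\bigsqcup_{i=1}^r\tilde\A_i\times\CC^{\ell-\ell_i}$. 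The same argument, with Proposition~\ref{24} in place of Proposition~\ref{21} and the $D$-half of Lemma~\ref{28}, gives the analytic localization $\tilde D_y\cong\tilde D_{Y,y}\times Y_y$ and then, after the direct-sum splitting, $\bigsqcup_{i=1}^r\tilde D_i\times\CC^{\ell-\ell_i}$ analytically.

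The main obstacle, such as it is, lies in the affine-factor accounting: one must be careful that the various ``extra'' coordinate directions (the translation group $X$ on one hand, and on the other the coordinates on the irreducible summands $V_j$ of $V/X$ with $j\neq i$ that do not enter $\tilde\A_i$) assemble to exactly $\CC^{\ell-\ell_i}$ for each $i$, with the discriminant of the product being (set-theoretically, and after reduction) the union of the pullbacks of the individual discriminants — which is where the disjoint union, rather than a fiber product, appears. This is precisely the mechanism already visible in Example~\ref{46} and Lemma~\ref{20}: the summand $\tilde S_{\A_i}$ of a tensor/direct-sum decomposition is a module over the full coordinate ring but is supported only on its own factor, so passing to $\Spec$ separates the components. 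Once this is spelled out, the theorem follows by combining the quoted results; I would present it as a short paragraph rather than a displayed computation.
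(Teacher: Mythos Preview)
Your proposal is correct and follows essentially the same approach as the paper: the paper states Theorem~\ref{61} with a \qed in the statement, deriving it directly from the preceding paragraph (the Dynkin-diagram identification of $W_X$ via \cite[\S1.8, Prop.~2.2]{Hum90}) combined with Propositions~\ref{21} and~\ref{24}. You are slightly more explicit than the paper in invoking Lemma~\ref{28} for the direct-sum splitting and in spelling out the affine-factor bookkeeping, but this is exactly the intended mechanism.
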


\subsection{Relation with the normalization}\label{49}

We denote the normalizations of $\A$ and $D$ by $\bar\A$ and $\bar D$ respectively.

\begin{prp}\label{43}
We have $S_\A\subseteq\tilde S_\A\subseteq\bar S_\A$ and $R_D\subseteq\tilde R_D\subseteq\bar R_D$.
\end{prp}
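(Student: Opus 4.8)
The plan is to establish the two chains of inclusions $S_\A\subseteq\tilde S_\A\subseteq\bar S_\A$ and $R_D\subseteq\tilde R_D\subseteq\bar R_D$ separately, though by the same mechanism. For the left-hand inclusions, these are immediate from the very construction: $\tilde S_\A=\coker J$ was shown in Theorem~\ref{206} to be an $S_\A$-algebra, i.e.\ the structure map $S_\A\to\tilde S_\A$ is a ring homomorphism, and by Lemma~\ref{161}/Theorem~\ref{160} it can be realized as the fractional-ideal inclusion $S_\A=R_D$ (resp.) into $\End_{S_\A}(\tilde I_\A)$ induced by the identity $1\mapsto\id$; since $\tilde I_\A$ contains a nonzerodivisor, this map is injective. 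The same applies to $R_D\subseteq\tilde R_D$ via Lemma~\ref{194}.

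For the right-hand inclusions, the point is that $\tilde S_\A$ (resp.\ $\tilde R_D$) is a finite, birational ring extension of the reduced ring $S_\A$ (resp.\ $R_D$), and any such extension lies inside the normalization. First I would record that $S_\A$ is reduced: $\Delta$ is a reduced defining equation of $\A$, so $S_\A=S/S\Delta$ is reduced; likewise $\Delta^2$ being a reduced (as a variety) defining equation for $D$ makes $R_D=R/R\Delta^2$ reduced. Next, $\tilde S_\A=\coker J$ is a finite $S_\A$-module (it is a quotient of $S^\ell$), and by Theorem~\ref{206} it is a ring; it is birational over $S_\A$ because $J^t$ (equivalently $J$) has rank $\ell-1$ at smooth points of $\A$, so at a generic point of each component the cokernel has rank $1$ and the map $S_\A\to\tilde S_\A$ becomes an isomorphism after localizing at the minimal primes — equivalently, $\tilde S_\A$ and $S_\A$ have the same total ring of fractions $Q_\A$. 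Realizing $\tilde S_\A$ as a subring of $Q_\A$ (which we may, since $\tilde S_\A=\End_{S_\A}(\tilde I_\A)$ is by Theorem~\ref{160} an $S_\A$-subalgebra of $Q_\A$), finiteness over $S_\A$ forces every element to be integral over $S_\A$, hence $\tilde S_\A\subseteq\bar S_\A$. The argument for $R_D\subseteq\bar R_D$ is identical, using that $K=JJ^t$ has rank $\ell-1$ at smooth points of $D$ and that $\tilde R_D=\End_{R_D}(\tilde I_D)\subseteq Q_D$ is finite over $R_D$.

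Concretely, the clean way to phrase this is: if $B$ is a reduced Noetherian ring with total ring of fractions $Q(B)$, and $B\subseteq C\subseteq Q(B)$ with $C$ a finite $B$-module, then $C\subseteq\bar B$, the integral closure of $B$ in $Q(B)$ — because a finite module generates a module-finite, hence integral, extension. We apply this with $B=S_\A$, $C=\tilde S_\A$ and with $B=R_D$, $C=\tilde R_D$. The only things to verify are that $\tilde S_\A$ and $\tilde R_D$ sit inside the respective total rings of fractions (this is exactly the content of Theorems~\ref{160} and \ref{206} together with Lemma~\ref{161}, which gives (rc), and Lemma~\ref{194}) and that they are finite modules over $S_\A$ and $R_D$ respectively (clear from the presentations \eqref{85}).

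I expect the only mild subtlety — the "main obstacle", such as it is — to be the bookkeeping needed to see that $\tilde S_\A$ and $\tilde R_D$ are genuinely \emph{sub}rings of $Q_\A$ and $Q_D$ rather than merely abstractly admitting a map to them; but this is already built into the statements of Theorems~\ref{160} and \ref{206}, where $\tilde S_\A$ and $\tilde R_D$ are identified with the endomorphism rings $\End_{S_\A}(\tilde I_\A)$ and $\End_{R_D}(\tilde I_D)$ of fractional ideals, which are by definition subalgebras of the total rings of fractions. So the proof is essentially a one-line invocation of the finite-birational-implies-inside-normalization principle, once those identifications are in hand.
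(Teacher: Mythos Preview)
Your proposal is correct and follows essentially the same route as the paper: the paper's proof is a one-line invocation of finiteness and birationality of $\tilde S_\A$ and $\tilde R_D$ over $S_\A$ and $R_D$, citing the explicit fractional-ideal descriptions \eqref{15}, \eqref{13}, \eqref{42}, \eqref{25} (which are exactly the content of Theorem~\ref{160} that you invoke). Your version simply unpacks the standard ``finite and birational over a reduced ring implies contained in the normalization'' principle in more detail.
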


\begin{proof}
This follows from the finiteness and birationality of $\tilde S_\A$ and $\tilde R_D$ over $S_\A$ and $R_D$, see \eqref{15}, \eqref{13}, \eqref{42}, \eqref{25}.
\end{proof}

In the following, we describe the cases of equality in Proposition~\ref{43}. 

We begin with the case $\ell=2$ of plane curves for irreducible $W$.
By \eqref{31} and for degree reasons, this case reduces to
\begin{gather}
\label{111}K=
\begin{pmatrix}
2p_1 & hp_2 \\
hp_2 & Q
\end{pmatrix},\quad Q=ap_1^r+bp_1^sp_2,\quad r=h-1,\quad \frac h2-1=s,\\
\label{50}\Delta^2=|K|=2p_1Q-h^2p_2^2=2ap_1^h+2bp_1^{h/2}p_2-h^2p_2^2.
\end{gather}
In particular, $b=0$ if $h$ is odd.
Note that there are no further restrictions imposed on $a$ and $b$  by the requirement
\begin{equation}\label{54}
\delta_2(\Delta^2)\in R\Delta^2
\end{equation}
for $\delta_2$ from \eqref{3}.
Indeed, $\ideal{\delta_1,\delta_2}_R$ is a Lie algebra, since $[\delta _1,\delta_2]=(h-2)\delta_2$ by homogeneity.
For generic $(a,b)$, $\Delta^2$ in \eqref{50} is reduced, and hence \eqref{54} holds true by \cite[Lem.~1.9]{Sai80}.
By continuity, it holds then also for special values of $(a,b)$.

\begin{prp}\label{56}
For $\ell=2$, irreducible $W$, and odd $h\ge5$, $\tilde D\neq\bar D$.
\end{prp}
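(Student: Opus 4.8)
The plan is to compute the ring $\tilde R_D$ explicitly as a subring of the normalization $\bar R_D$ and to exhibit an element of $\bar R_D$ that does not lie in it.

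First I would record the normal form. For $\ell=2$, irreducible $W$ and odd $h$ we have $b=0$ in \eqref{111}, so
\[
K=\begin{pmatrix}2p_1&hp_2\\hp_2&ap_1^{h-1}\end{pmatrix},\qquad\Delta^2=\det K=2ap_1^h-h^2p_2^2 .
\]
Since $\Delta^2$ is a reduced element of $R=\CC[p_1,p_2]$ by Solomon's theorem (Theorem~\ref{8}), the coefficient $a$ must be non-zero; hence $D=\{2ap_1^h=h^2p_2^2\}$ is the cuspidal (in particular unibranch) plane curve singularity of type $A_{h-1}$, and its normalization is $\bar R_D=\CC[t]$ under $p_1\mapsto t^2$, $p_2\mapsto\gamma t^h$ with $\gamma\neq0$ and $h^2\gamma^2=2a$. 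In particular $R_D\cong\CC[t^2,t^h]$.

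Next I would identify $\tilde R_D$. By \eqref{15} (together with \eqref{42} and \eqref{13}) we have $\tilde R_D=R_D[g_1]$ with $g_1\in Q_D$, and by \eqref{25} (equivalently by Proposition~\ref{12}), $g_1=M^1_2/M^2_2=-hp_2/(2p_1)$, where $(M^i_j)=\ad K$. The relation $\Delta^2=0$ in $R_D$ gives $g_1^2=h^2p_2^2/(4p_1^2)=\tfrac a2\,p_1^{h-2}\in R_D$, so $g_1$ is integral of degree $2$ over $R_D$ and $\tilde R_D=R_D+R_D\,g_1$ as an $R_D$-module. Under the normalization map $g_1$ goes to a non-zero scalar multiple of $t^{h-2}$, whence
\[
\tilde R_D\cong\CC[t^2,t^h]+\CC[t^2,t^h]\,t^{h-2}=\CC[t^2,t^{h-2}],
\]
the last equality because $t^h=t^2\cdot t^{h-2}$.

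Finally, for odd $h\ge5$ the exponent $h-2\ge3$ is odd, so $1\notin\langle2,h-2\rangle$ and hence the graded subring $\CC[t^2,t^{h-2}]$ of $\CC[t]$ has trivial degree-$1$ part; in particular $t\notin\tilde R_D$, so $\tilde R_D\subsetneq\CC[t]=\bar R_D$, i.e.\ $\tilde D\neq\bar D$. I do not expect a genuine obstacle here: the only points needing care are the verification that $a\neq0$ (so that $D$ is a cusp rather than a non-reduced double line, which is exactly where oddness of $h$ enters) and the explicit bookkeeping of $g_1$ and of the normalization map. As a cross-check one may argue via the conductor: by \cite[Thm.~3.4]{MP89} the conductor of $R_D\hookrightarrow\tilde R_D$ is generated by $M^2_2=2p_1$, which maps to $t^2\bar R_D$; were $\tilde R_D=\bar R_D$ this would force $t^2\bar R_D\subseteq R_D$, contradicting $t^3\notin\CC[t^2,t^h]$ for $h\ge5$.
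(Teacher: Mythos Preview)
Your argument is correct and follows essentially the same route as the paper's own proof: specialize $K$ using $b=0$ for odd $h$, identify the normalization as $\CC[t]$ via $p_1\mapsto t^2$, $p_2\mapsto\gamma t^h$, compute $g_1$ as a nonzero multiple of $t^{h-2}$, and conclude $\tilde R_D=\CC[t^2,t^{h-2}]\subsetneq\CC[t]$ for $h\ge5$. Your version is slightly more explicit (you justify $a\neq0$ and add the conductor cross-check), but there is no substantive difference in method.
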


\begin{proof}\pushQED{\qed}
In this case,
\begin{equation}\label{62}
K=
\begin{pmatrix}
2p_1 & hp_2 \\
hp_2 & ap_1^r
\end{pmatrix}
\end{equation}
and \eqref{50} specializes to 
\[
\Delta^2=|K|=2ap_1^{r+1}-h^2p_2^2\equiv p_1^h-p_2^2.
\]
The normalization of $D$ is given by $p_1=t^2$ and $p_2=t^h$, and hence $g_1=\frac{p_2}{p_1}=t^{h-2}$ by \eqref{25} and \eqref{62}.
Then \eqref{42} becomes
\[
\tilde R_D=R_D[g_1]=\CC[t^2,t^{h-2}]\subsetneq \CC[t]=\bar R_D.\qedhere
\]
\end{proof}

Using Theorem~\ref{61} and Lemma~\ref{28} we find

\begin{cor}\label{59}
If $W$ contains any irreducible summand of type $H_3$, $H_4$, or $I_2(k)$ for odd $k$, then $\tilde D\neq\bar D$.
\end{cor}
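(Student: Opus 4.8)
The plan is to reduce the statement of Corollary~\ref{59} to the irreducible rank-two case already settled in Proposition~\ref{56}, using the two structural results established earlier: the local-triviality/decomposition theorem (Theorem~\ref{61}) and the compatibility of the construction with direct sums (Lemma~\ref{28}). The point is that $\tilde D = \bar D$ is a condition that can be tested locally on $D$, and along logarithmic strata $\tilde D$ looks like a product of the $\tilde D_i$ for the irreducible pieces of the local isotropy group.

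First I would record the elementary fact that for any reduced $R_D$, the inclusions $R_D \subseteq \tilde R_D \subseteq \bar R_D$ of Proposition~\ref{43} give an equality $\tilde R_D = \bar R_D$ if and only if $\tilde R_D$ is normal, and that normality is a local (and, since everything in sight is finite, even analytically local) property. So it suffices to exhibit a single point of $D$ at which the germ of $\tilde D$ fails to be normal. Next, using Lemma~\ref{28}, I would reduce to the case $W = W'$ irreducible of type $H_3$, $H_4$, or $I_2(k)$ with $k$ odd: if $W = W' \oplus W''$ with $W'$ of one of these types, then $\tilde R_D \cong \tilde R_{D'} \otimes_\CC S'' \oplus \cdots$ decomposes compatibly, and a non-normal point of $\tilde D'$ yields a non-normal point of $\tilde D$ (a product with a smooth factor is normal iff each factor is). For $I_2(k)$ with $k$ odd this is already Proposition~\ref{56}. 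So the remaining work is the cases $W = H_3$ and $W = H_4$.

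For those two cases I would invoke Theorem~\ref{61}: choosing a flat $X \in L(\A)$ whose orthogonal simple-root sub-diagram is of type $I_2(5)$ (the Dynkin diagram of $H_3$ contains an edge labelled $5$, hence a parabolic of type $I_2(5)$; likewise for $H_4$), the analytic localization of $\tilde D$ along $Y = p(X)$ is isomorphic to $\tilde D_{I_2(5)} \times \CC^{\ell - 2}$ (possibly together with further disjoint components coming from other connected components of the sub-diagram, but those only help). Since $5$ is odd and $\ge 5$, Proposition~\ref{56} says $\tilde D_{I_2(5)} \neq \bar D_{I_2(5)}$, i.e.\ $\tilde D_{I_2(5)}$ is not normal; hence neither is its product with $\CC^{\ell-2}$, so $\tilde D$ is not normal near $Y$, giving $\tilde D \neq \bar D$.

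The only genuine point to check — and the main (minor) obstacle — is the combinatorial claim that the Dynkin diagrams of $H_3$ and $H_4$ each contain a sub-diagram of type $I_2(5)$ formed by simple roots orthogonal to some flat $X$. This is immediate: the Coxeter diagram of $H_\ell$ is a path with one edge labelled $5$ and the rest unlabelled, so dropping all but the two nodes joined by the $5$-edge leaves exactly $I_2(5)$, and by the discussion preceding Theorem~\ref{61} this sub-diagram is realized by the isotropy group $W_X$ of the corresponding face $X$. With that in hand the corollary follows formally from Proposition~\ref{56}, Lemma~\ref{28}, and Theorem~\ref{61}, exactly as the text announces.
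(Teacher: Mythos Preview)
Your proposal is correct and follows essentially the same route as the paper: Proposition~\ref{56} handles $I_2(k)$ (since $h=k$ there), Theorem~\ref{61} together with the parabolic $I_2(5)$ inside $H_3$ and $H_4$ handles the $H$-types, and Lemma~\ref{28} takes care of the reducible case. The paper compresses this into the adjacency chain $H_4\to H_3\to I_2(5)$, but your expanded version---making explicit that $\tilde D=\bar D$ is equivalent to normality of $\tilde D$ and that this can be tested analytically locally---is exactly the content behind that phrase.
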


\begin{proof}
For $W$ of type $I_2(k)$, we have $h=k$ and the claim follows from Proposition~\ref{56}.
For the $H_k$-types, the statement follows from Theorem~\ref{61} and the adjacency chain $H_4\to H_3\to I_2(5)$. 
\end{proof}

We write $\CC_0=S/\mm$ where $\mm$ is the graded maximal ideal in $S$.
Then $\tilde\A_0=\Spec(\tilde S_\A\otimes_S\CC_0)$ is the fiber of $\tilde\A$ over $0\in V$.

\begin{lem}\label{23}
The group $W$ acts trivially on the fiber $\tilde\A_0$ of $\tilde\A$ over $0\in V$, which contains exactly as many geometric points as the number of irreducible summands of $W$.
\end{lem}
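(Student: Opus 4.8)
The plan is to reduce to the irreducible case via Lemma~\ref{28} and then count geometric points of the fiber $\tilde\A_0$ using the description of $\tilde S_\A$ from Subsection~\ref{231}. First I would observe that by Lemma~\ref{28}, both the formation of $\tilde S_\A$ and the $W$-action on it are compatible with direct sums, and the fiber over $0\in V$ of a product of arrangements is the product of the fibers; so it suffices to treat $W$ irreducible. For irreducible $W$, I would use the presentation $\tilde S_\A=\coker J$ from \eqref{85}, together with the fact (Lemma~\ref{161}, (grc)) that $\tilde S_\A$ is generated as an $S_\A$-algebra by the elements $h_1,\dots,h_{\ell-1}$ of \eqref{15}. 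Tensoring the presentation $\xymatrix{S^\ell\ar[r]^{J^t}&S^\ell\ar[r]&\tilde I_\A\ar[r]&0}$ with $\CC_0=S/\mm$ kills every entry of $J$ (all entries of $J$ lie in $\mm$, since the $p_i$ have no linear part in the irreducible case — here one uses $m_1=1$, so $\deg p_i\ge 2$), so $\tilde S_\A\otimes_S\CC_0$ is a quotient of a finite-dimensional $\CC$-algebra whose underlying vector space has a clean combinatorial description.

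The key step is to identify exactly how many geometric points — that is, maximal ideals, or equivalently $\CC$-algebra homomorphisms to $\CC$ — the fiber $\tilde\A_0$ has. Here I would invoke Theorem~\ref{106}\eqref{106bii}: the geometric fiber of $\tilde\A$ over a point $x\in\A$ is in bijection with the set of irreducible summands of the $W_x$-representation on $V/X$, where $X$ is the flat containing $x$. At $x=0$ the flat $X$ is $\{0\}$, the stabilizer $W_0$ is all of $W$, and $V/X=V$; so the geometric fiber $\tilde\A_0$ is in bijection with the set of irreducible summands of the standard representation of $W$ on $V$. For $W$ irreducible this is a single summand, so $\tilde\A_0$ has exactly one geometric point; summing over irreducible factors of a general $W$ gives precisely the number of irreducible summands, as claimed.

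For the triviality of the $W$-action on $\tilde\A_0$: the $W$-action on $\tilde S_\A$ is the one coming from $(\Der_S)^W$-equivariance (it acts $S$-linearly, covering the $W$-action on $S$), and $W$ fixes $0\in V$, so it acts on the finite set $\tilde\A_0$ of geometric points. Since $\tilde\A_0$ consists of exactly one point per irreducible summand of $W$, and $W$ permutes these points while fixing the summands (each summand is $W$-stable), the action permutes each singleton to itself, hence is trivial. I expect the main obstacle to be the bookkeeping needed to make "geometric fiber" precise when $0$ is not necessarily in the open dense part of a flat of maximal type — but since Theorem~\ref{106}\eqref{106bii} is stated uniformly for all $x\in\A$ and $\tilde\A$ is finite birational over $\A$ (Theorem~\ref{106}\eqref{106bi}), applying it at $x=0$ is legitimate; one should just double-check that the bijection there is $W_x$-equivariant, which follows from its construction via the $W_x$-representation $V/X$, each irreducible summand being an intrinsic (hence $W_x$-fixed) object. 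An alternative, more hands-on route, should one wish to avoid citing \eqref{106bii} here, is to compute $\tilde S_\A\otimes_S\CC_0$ directly from $\coker J$ and the $(m^\ell_j)$ using \eqref{15}, but this is longer and the representation-theoretic statement is exactly what is available.
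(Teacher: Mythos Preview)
Your argument is circular. You invoke Theorem~\ref{106}\eqref{106bii} to count the geometric points of $\tilde\A_0$, but in the paper the content of \eqref{106bii} is precisely Proposition~\ref{44}, and Proposition~\ref{44} is deduced \emph{from} Lemma~\ref{23} (together with the localization Propositions~\ref{21} and~\ref{24}). So Lemma~\ref{23} is logically prior to \eqref{106bii}; you cannot cite \eqref{106bii} here.

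There is also a gap in your treatment of the $W$-action. You argue that $W$ fixes each geometric point of $\tilde\A_0$ because the irreducible summands of $V$ are $W$-stable. Even granting the bijection, this only shows that $W$ acts trivially on the \emph{set} of closed points, not on the scheme $\tilde\A_0$; the fiber need not be reduced. The paper's argument is stronger and more direct: by \eqref{15} one has $\tilde S_\A\otimes_S\CC_0=\CC[h_1,\dots,h_{\ell-1}]$, and Proposition~\ref{27} shows that each $h_i=g_i\in Q_D$ is $W$-invariant, so $W$ acts trivially on the entire fiber algebra.

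For the point count in the irreducible case, the paper uses the ``hands-on route'' you mention only at the end: the $h_i$ have strictly positive $w$-degree $w_\ell-w_i$ (by \eqref{30}, \eqref{14}, \eqref{13}), so $\CC[h_1,\dots,h_{\ell-1}]$ is positively graded; being also finite over $\CC$ (from \eqref{15}), it is local, hence $\tilde\A_0$ is a single geometric point. This is short and avoids any forward reference. Your reduction to the irreducible case via Lemma~\ref{28} is fine and matches the paper.
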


\begin{proof}
By \eqref{15}, $\tilde S_\A\otimes_S\CC_0\cong\CC[h_1,\dots,h_{\ell-1}]$ and by Proposition~\ref{27} the $h_i$ are $W$-invariants.
This implies the first claim.
For the second statement, we may assume that $W$ is irreducible by Lemma~\ref{28}.
Then \eqref{30}, \eqref{14}, and \eqref{13} imply that $h_i$ has $w$-degree $w_\ell-w_i$.
So $\CC[h_1,\dots,h_{\ell-1}]$ is positively graded and hence $\tilde\A$ is a cone.
As it is also finite over $0\in V$ due to \eqref{15}, it must be a single geometric point as claimed.
\end{proof}

We write $\CC_x=S/\mm_x$ and $\CC_y=R/\mm_y$ where $\mm_x$ and $\mm_y$ are the maximal ideals of $S$ at $x$ and of $R$ at $y$.
Then $\tilde\A_x=\Spec(\tilde S_\A\otimes_S\CC_x)$ and $\tilde D_y=\Spec(\tilde R_D\otimes_R\CC_y)$ are the fibers of $\tilde\A$ over $x$ and of $\tilde D$ over $y$ respectively. 
Combining Propositions~\ref{21} and \ref{24}, \eqref{15}, Proposition~\ref{27}, and Lemma~\ref{23}, we find

\begin{prp}\label{44}
The fibers $\tilde\A_x$ and $\tilde D_y$, $y=p(x)$, coincide, that is,
\[
\tilde S_\A\otimes_S\CC_x=\tilde R_D\otimes_R\CC_y.
\]
They are trivial $W_x$-modules containing exactly as many geometric points as the number of irreducible summands of $W_x$.
\end{prp}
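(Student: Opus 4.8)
The plan is to assemble Proposition~\ref{44} from the pieces already built, treating the two assertions --- coincidence of the fibers $\tilde\A_x$ and $\tilde D_y$, and their structure as trivial $W_x$-modules with geometric points counted by irreducible summands of $W_x$ --- in turn. First I would localize. Let $X=X(x)\in L(\A)$ be the flat through $x$, so that $W_x=W_X$, and let $Y=p(X)$. By Proposition~\ref{21} the algebraic localization of $\tilde\A$ at $I_X$ is $\tilde S_{\A_X/X}\otimes_\CC\CC(X)$, and by Proposition~\ref{24} the analytic germ $\tilde D_y$ is $\tilde D_{Y,y}\times Y_y$, where $D_Y$ is the discriminant of $W_X$ acting on $V/X$. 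Passing from these localizations to the actual fibers over $x$ and over $y$ kills the ``constant directions'' $\CC(X)$ and $Y_y$: tensoring with $\CC_x$ (resp.\ $\CC_y$) sets the coordinates on $X$ (resp.\ on $Y$) to zero. Hence
\[
\tilde S_\A\otimes_S\CC_x\cong\tilde S_{\A_X/X}\otimes_\CC\CC_{0},\qquad
\tilde R_D\otimes_R\CC_y\cong\tilde R_{D_Y}\otimes_R\CC_{0},
\]
so the fibers $\tilde\A_x$ and $\tilde D_y$ are respectively the fibers over the origin of $\tilde\A_X$ and $\tilde D_Y$ for the group $W_X$ acting on $V/X$. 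This already reduces everything to the case $x=0$, $W$ replaced by $W_x$.

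Next I would invoke the comparison between $\tilde\A$ and $\tilde D$ at the origin for the group $W_x$. If $W_x$ is irreducible, Proposition~\ref{27} (equation~\eqref{25}, giving $h_i=g_i$) together with the presentation~\eqref{15} of $\tilde S_\A$ and $\tilde R_D$ as $S_\A[h_1,\dots,h_{\ell-1}]$ and $R_D[g_1,\dots,g_{\ell-1}]$ shows that modulo the respective maximal ideals both fibers are $\CC[h_1,\dots,h_{\ell-1}]$ with the images of the $h_i$; since the $h_i$ are the same elements of the common fraction ring, the two fibers are literally the same finite $\CC$-algebra. For reducible $W_x$ one passes to each irreducible summand using Lemma~\ref{28}, which states that both assignments $W\mapsto\tilde S_\A$ and $W\mapsto\tilde R_D$ commute with direct sums; this is what makes the coincidence and the count behave additively over the irreducible factors of $W_x$. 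So $\tilde S_\A\otimes_S\CC_x=\tilde R_D\otimes_R\CC_y$, which is the first assertion.

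For the module-theoretic statement, the key input is Lemma~\ref{23}: for irreducible $W$ (hence, after the reduction above, for each irreducible summand of $W_x$) the fiber $\tilde\A_0$ is a single geometric point on which $W$ acts trivially --- the proof there used that the $h_i$ are $W$-invariants (Proposition~\ref{27}) and that $\CC[h_1,\dots,h_{\ell-1}]$ is positively $w$-graded with the $h_i$ in strictly positive $w$-degree $w_\ell-w_i$, forcing the cone to collapse to a point over the origin. Triviality of the $W$-action passes to $W_x$, and by Lemma~\ref{28} the fiber of a direct sum is the product of the fibers of the summands, so for general $W_x$ the number of geometric points in $\tilde\A_x$ equals the number of irreducible summands of $W_x$, with $W_x$ acting trivially. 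Combining this with the coincidence $\tilde\A_x=\tilde D_y$ established above finishes the proof.

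The main obstacle I expect is the bookkeeping in the localization step: making sure that ``localize at $I_X$, then kill $\mm_x$'' genuinely recovers ``the fiber over $0$ of the localized arrangement $\tilde\A_X$'', i.e.\ that the translation-group factors $\CC(X)$ and $Y_y$ split off cleanly and do not interfere when one afterwards restricts to the closed point, and that the analytic trivialization of Proposition~\ref{24} for $\tilde D$ is compatible with the algebraic one for $\tilde\A$ via $p\colon X\to Y$ (which is a covering by \cite[\S2]{Orl89}, so it is an isomorphism on the relevant germs). Once one is comfortable that both fibers reduce to the origin-fibers for $W_x$ on $V/X$, the rest is a direct appeal to Proposition~\ref{27}, Lemma~\ref{23} and Lemma~\ref{28}.
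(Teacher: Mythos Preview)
Your proposal is correct and follows essentially the same route as the paper: the paper's proof is a one-line appeal to Propositions~\ref{21} and~\ref{24}, equation~\eqref{15}, Proposition~\ref{27}, and Lemma~\ref{23}, and you have unpacked precisely these ingredients (adding Lemma~\ref{28}, which the paper already absorbed into the proof of Lemma~\ref{23}). Your caveat about the localization bookkeeping is well placed---strictly speaking, Proposition~\ref{21} localizes at the prime $I_X$ rather than at $\mm_x$, but the same argument applies verbatim at $\mm_x$ since $\Delta/\Delta_X$ is a unit near any point of $X$.
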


We can now refine Proposition~\ref{43} for $\A$.

\begin{cor}\label{45}\
\begin{enumerate}
\item\label{45a} $\A=\tilde\A$ exactly if $\A$ contains only one plane (or $W$ has type $A_1$). 
\item\label{45b} $\tilde\A=\bar\A$ exactly if $\A$ is Boolean (or $W$ has type $A_1\times\cdots\times A_1$). 
\end{enumerate}
\end{cor}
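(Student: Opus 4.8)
The plan is to prove each equivalence by combining the localization and fiber computations already established. For part \eqref{45a}, the implication from the right is immediate: if $\A$ consists of a single hyperplane then $\A$ is smooth, so $S_\A$ is already normal and there is nothing between $S_\A$ and $\bar S_\A$; since $S_\A \subseteq \tilde S_\A \subseteq \bar S_\A$ by Proposition~\ref{43}, equality holds. For the converse, suppose $\A$ has more than one hyperplane. Then $\A$ is singular, and I want to exhibit a point where $\tilde\A$ differs from $\A$. The cleanest approach is to use Proposition~\ref{44}: at a point $x$ lying on at least two hyperplanes, the isotropy group $W_x$ has at least one irreducible summand that is \emph{not} of type $A_1$ acting effectively, OR $W_x$ has two $A_1$ summands — in either case the fiber $\tilde\A_x$ has either more than one geometric point or a single non-reduced point, whereas the fiber of $\A$ over such a point is either a single reduced point (if $x$ is a smooth point of $\A$) or else $\A$ is singular there. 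Actually the sharpest tool is Theorem~\ref{106}\eqref{106bii}--\eqref{106biii}: at a point on a single hyperplane the representation of $W_x$ on $V/X$ is of type $A_1$ and the fiber is one smooth point, matching $\A$; but as soon as $x$ lies on two or more hyperplanes, either the fiber has $\geq 2$ points or the unique point is a non-$A_1$ (hence singular) point of $\tilde\A$, and in both cases $\tilde\A \ne \A$ near $x$ since $\A$ near such $x$ is a union of $\geq 2$ hyperplanes which has a connected, non-normal-crossing-at-a-point... more carefully, one checks $\tilde S_\A$ strictly contains $S_\A$ locally at $x$ because the normalization map $\tilde\A\to\A$ is not an isomorphism there (it separates branches or resolves partially). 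I would phrase this last step via Proposition~\ref{21}, reducing to $\A_X$ with $X$ the flat through $x$, and then invoking Corollary~\ref{45a}... no, that is circular; instead invoke directly that for $\A_X$ with $\geq 2$ hyperplanes, $\tilde S_{\A_X}\ne S_{\A_X}$ because e.g.\ $\#\tilde\A_{X,0}$ equals the number of irreducible summands of $W_X$ (Lemma~\ref{23}), and either this is $\geq 2$ (giving more points than the connected $\A_X$ admits as $S_\A$-point) or $=1$ with $W_X$ irreducible non-$A_1$, whence by Theorem~\ref{106}\eqref{106biii} that point is singular on $\tilde\A$ though $\A$ could be singular too — so here one needs the finer observation that $\tilde\A\to\A$ is birational and finite and an isomorphism only over the smooth locus, which forces strict inclusion whenever $\A$ is singular and $\tilde\A$ is its partial normalization; but $\tilde\A=\A$ would make $\A$ normal, hence smooth, hence a single hyperplane.

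For part \eqref{45b}, the implication from the right is Example~\ref{46}: when $\A$ is Boolean, $\Delta = x_1\cdots x_\ell$ and the explicit computation there shows $\tilde S_\A = \coker J = \bigoplus_i \CC[x_1,\dots,\widehat{x_i},\dots,x_\ell]$, which is exactly $\bar S_\A$, the normalization of the normal crossing divisor $\A$ (whose branches are the $\ell$ coordinate hyperplanes). For the converse, assume $\A$ is not Boolean; I must show $\tilde S_\A \subsetneq \bar S_\A$. Again localize: if $\A$ is not Boolean, there is a flat $X \in L(\A)$ such that $\A_X$ is the reflection arrangement of some irreducible $W_X$ of rank $\geq 2$ (if every localization $W_X$ were of type $A_1^k$, then $\A$ would be Boolean, since a Coxeter arrangement with all rank-$2$ flats meeting only two hyperplanes orthogonally is the reflection arrangement of $A_1^\ell$). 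By Proposition~\ref{21}, $(\tilde S_\A)_{I_X} = \tilde S_{\A_X/X}\otimes_\CC\CC(X)$, so it suffices to show $\tilde S_{\A_X}\ne \bar S_{\A_X}$ for irreducible $W_X$ of rank $\geq 2$. This is where I would use Theorem~\ref{106}\eqref{106biii} together with Theorem~\ref{220}: $\tilde\A_X$ is reduced and Cohen–Macaulay, and if it were normal it would be smooth; but a smooth $\tilde\A_X$ birational over the connected singular $\A_X$ would have to be connected and smooth, yet Lemma~\ref{23} says the fiber over $0$ is a single point, and Theorem~\ref{106}\eqref{106biii} says that single point of $\tilde\A_X$ is smooth iff the representation of $W_X$ on $V$ is of type $A_1$ — impossible for irreducible $W_X$ of rank $\geq 2$. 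Hence $\tilde\A_X$ is singular, so $\tilde S_{\A_X}$ is not normal, so $\tilde S_{\A_X}\subsetneq \bar S_{\A_X}$.

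The main obstacle I anticipate is the geometric/combinatorial step in the converse of \eqref{45b}: showing that a Coxeter arrangement all of whose localizations $W_X$ are of type $A_1 \times \cdots \times A_1$ must be Boolean. This should follow by induction on rank using the fact that the localization at a rank-$2$ flat determines whether two hyperplanes meet "orthogonally with commuting reflections" ($I_2(2) = A_1\times A_1$) versus in a genuine $I_2(k)$, $k\geq 3$, pattern; a non-Boolean irreducible Coxeter group has some rank-$2$ parabolic of type $I_2(k)$, $k\geq 3$, so its arrangement is not Boolean, and Boolean-ness of a reducible $W$ reduces componentwise via Lemma~\ref{28}. Once that combinatorial input is in hand, everything else is an assembly of Propositions~\ref{21}, \ref{43}, \ref{44}, Lemma~\ref{23}, Example~\ref{46}, Theorem~\ref{220}, and Theorem~\ref{106}\eqref{106bii}--\eqref{106biii}. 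A secondary subtlety is making the non-circular argument for the converse of \eqref{45a}: I must be careful to deduce "$\A$ singular $\Rightarrow \tilde S_\A \ne S_\A$" not from \eqref{45a} itself but from the fact (implicit in Theorems~\ref{206}, \ref{220}) that $\tilde\A \to \A$ is the partial normalization that is an isomorphism exactly over the smooth locus of $\A$, together with the observation that $\tilde S_\A = S_\A$ would force $S_\A$ to be integrally closed (being its own ring of Theorem~\ref{160}, or by $\tilde S_\A = \End_{S_\A}(\tilde I_\A)$ and a Grauert–Remmert/Vasconcelos-type criterion), hence $\A$ normal, hence smooth, hence a single hyperplane.
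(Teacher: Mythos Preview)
For part~\eqref{45a} your final argument is correct and takes a different route from the paper. The paper picks a codimension-$2$ flat and counts geometric points in the fiber of $\tilde\A$ via Proposition~\ref{44}. You instead use that $\tilde S_\A=\End_{S_\A}(J_\A^\vee)$ (this is Proposition~\ref{133} applied to the free divisor $\A$, which satisfies (rc) by Lemma~\ref{161}) together with the Grauert--Remmert/Vasconcelos criterion quoted before that proposition, obtaining $\tilde S_\A=S_\A\iff\A$ is normal $\iff\A$ is smooth $\iff\#\A=1$. This is clean and requires no case analysis; you should however streamline the write-up and drop the earlier false starts.

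For part~\eqref{45b} there is a genuine gap. After localizing to an irreducible $W_X$ of rank $\geq 2$ you assert that ``if $\tilde\A_X$ were normal it would be smooth'', but this implication fails once $\dim\tilde\A_X\geq 2$: a normal Cohen--Macaulay variety can certainly be singular (e.g.\ a quadric cone). Theorem~\ref{106}\eqref{106biii} tells you the unique point over $0$ is singular, but that alone does not rule out normality. The paper's argument is both simpler and avoids this trap, and requires no localization at all. If $\A$ is not Boolean then $W$ has an irreducible summand of rank $\geq 2$; that summand contributes at least two hyperplanes to $\A$, yet by Lemma~\ref{23} only \emph{one} geometric point to the fiber $\tilde\A_0$. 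Since $\bar\A$ is the disjoint union of the hyperplanes, its fiber over $0$ has $\#\A$ geometric points, so the two fibers have different cardinalities and $\tilde\A\ne\bar\A$. Equivalently: over that summand $\tilde\A$ is connected but reducible, hence not normal---this is what the paper means by ``its reflection hyperplanes do not separate in $\tilde\A$''. Your combinatorial worry about locating a flat with non-$A_1^k$ stabilizer is therefore unnecessary.
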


\begin{proof}\
\begin{asparaenum}
\item If $\#\A>1$, pick $x$ with $X(x)=X\in L_2(\A)\ne\emptyset$. 
Then $W_X$ is of type $A_1\times A_1$.
So by Proposition~\ref{44}, $\tilde\A$ has two points over $x$.
The converse is Example~\ref{46} for $\ell=1$.
\item Again one implication is Example~\ref{46}.
If $\A$ is not Boolean, then $W$ has an non-$A_1$ type irreducible summand.
By Lemma~\ref{23}, its reflection hyperplanes do not separate in $\tilde\A$.
\end{asparaenum}
\end{proof}

The analogue of Corollary~\ref{45} for $D$ is less trivial.

\begin{thm}\label{60}
$\tilde D=\bar D$ exactly if all irreducible summands of $W$ are of $ADE$-type. 
In this case, $\tilde D$ is smooth.
\end{thm}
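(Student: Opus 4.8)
The plan is to split the statement into its two directions and to reduce everything, via Lemma~\ref{28} (compatibility with direct sums) and Theorem~\ref{61} (local triviality along logarithmic strata), to statements about irreducible Coxeter groups. For the \emph{if} direction, suppose every irreducible summand of $W$ is of $ADE$-type. In the $ADE$ case, $D$ is the discriminant in the base of a miniversal deformation of an $ADE$-singularity, and $\tilde R_D = \coker K$ is the ring described in \eqref{113}: it is the pushforward $\pi_*\OO_{\Sigma^0}$ of the structure sheaf of the relative critical locus restricted to $V(F)$. This is well known to be normal (indeed smooth), since $\Sigma$ is smooth and $\Sigma^0\to D$ is the normalization map; alternatively, invoke \cite[Thm.~2.5]{BEG09} together with $\codim_Sf(X_{\Sing})\geq 3$ in the $ADE$ case, which gives $\End_{R_D}(J_D)=\bar R_D$, hence by Proposition~\ref{133} $\tilde R_D = \bar R_D$ and $\tilde D$ is smooth. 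By Lemma~\ref{28}, if all summands are $ADE$ then $\tilde R_D$ is a tensor product of smooth rings, hence smooth, and in particular equals $\bar R_D$.

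For the \emph{only if} direction, suppose $W$ has an irreducible summand $W'$ not of $ADE$-type, so $W'$ is of type $B_\ell$, $C_\ell$ ($=B_\ell$), $F_4$, $H_3$, $H_4$, $G_2=I_2(6)$, or $I_2(k)$; I must show $\tilde D\neq\bar D$. By Lemma~\ref{28} it suffices to treat $W=W'$ irreducible and non-$ADE$. The dihedral cases $I_2(k)$ with $k$ odd and the $H$-types are already handled by Corollary~\ref{59}. It remains to handle $I_2(k)$ for even $k\geq 4$ (which covers $G_2$), $B_\ell$ for $\ell\geq 2$, and $F_4$. For the even dihedral case, I would argue directly from the $\ell=2$ normal form \eqref{111}--\eqref{50}: the normalization of the plane curve $D=V(\Delta^2)$ is a single analytic branch (since $\Delta$ is irreducible, $W$ being irreducible), and a degree count on $g_1=\p_{p_1}(\Delta^2)/\p_{p_\ell}(\Delta^2)$ as in the proof of Proposition~\ref{56} shows $R_D[g_1]$ is a proper subring of $\bar R_D$ — the point being that $\tilde D$ retains the singularity of $D$ at $0$ whenever $h\geq 3$. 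For $B_\ell$ and $F_4$, the cleanest route is the adjacency/localization argument: every such $W$ has a rank-$2$ parabolic subgroup $W_X=W_{X(x)}$ of type $B_2=I_2(4)$, so by Theorem~\ref{61} the analytic localization of $\tilde D$ along $Y=p(X)$ contains a factor $\tilde D'\times\CC^{\ell-2}$ with $D'$ the discriminant of $I_2(4)$; since $\tilde D'\neq\bar D'$ by the even dihedral case, $\tilde D$ is not normal along $Y$, hence $\tilde D\neq\bar D$.

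The main obstacle I anticipate is the even dihedral case $I_2(k)$ (equivalently $B_2$, $G_2$): Corollary~\ref{59} only gives the \emph{odd} case, and for even $h$ the extra term $bp_1^{h/2}p_2$ in $Q$ (equation \eqref{111}) means $\Delta^2$ is no longer the weighted-homogeneous cusp $p_1^h-p_2^2$, so one cannot just read off the Puiseux parametrization. I would need either to verify that for the actual invariants of $I_2(k)$ the curve $D$ is still a genuine singularity at $0$ (which it must be, as $D$ is not smooth for $k\geq 3$) and then re-run the degree bound on $g_1$, or else argue abstractly that $\tilde R_D=\bar R_D$ would force $D$ to be smooth (by Proposition~\ref{133}, $\tilde R_D=\End_{R_D}(J_D^\vee)$, and for a \emph{plane curve} the endomorphism ring of the Jacobian ideal equals the normalization iff the curve is smooth), contradicting $\#\A=2>1$. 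This abstract argument in fact works uniformly for all non-$ADE$ types once one knows $\tilde R_D=\End_{R_D}(J_D^\vee)$ and that $\End_{R_D}(J_D^\vee)=\bar R_D$ for a reduced curve singularity \emph{only} at smooth points — but to deploy it in dimension $\ell>2$ one still needs the rank-$2$ localization step above, which is why the structural input of Theorem~\ref{61} is doing the real work.
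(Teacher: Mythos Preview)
Your \emph{if} direction is essentially the paper's argument: identify $V/W$ with the base of a miniversal $ADE$-deformation so that $\tilde D=\Sigma^0$ is smooth, and use Lemma~\ref{28} for the reducible case.

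Your \emph{only if} direction, however, has a genuine gap in the even dihedral case $I_2(k)$, and since your treatment of $B_\ell$ and $F_4$ reduces to $B_2=I_2(4)$ via Theorem~\ref{61}, the gap propagates. Two of your proposed arguments fail:
\begin{itemize}
\item The claim that ``$\Delta$ is irreducible, $W$ being irreducible'' is false for even $k$: the dihedral group $I_2(k)$ with $k$ even has \emph{two} $W$-orbits of reflecting hyperplanes, so $D$ has two analytic branches. The single-branch Puiseux argument of Proposition~\ref{56} does not apply.
\item The abstract fallback --- that for a reduced plane curve $\End_{R_D}(J_D^\vee)=\bar R_D$ only at smooth points --- is also false. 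The ordinary cusp $p_2^2=p_1^3$ (the discriminant of $A_2=I_2(3)$) is singular, yet $\tilde R_D=\End_{R_D}(J_D^\vee)=\bar R_D$; this is exactly the $ADE$ case you handled in the first half. The Grauert--Remmert criterion characterizes when $R_D$ itself equals $\End_{R_D}(J_D^\vee)$, not when the latter equals $\bar R_D$.
\end{itemize}

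The paper's argument for the remaining types $B_\ell$, $C_\ell$, $F_4$, and $I_2(k)$ with $k$ even is entirely different and much shorter: in each of these cases $W$ has at least two orbits on $\A$, so $D$ is reducible and $\bar D$ has at least two connected components. But $\tilde D$ is \emph{connected}, because by Lemma~\ref{23} (and Proposition~\ref{44}) the fiber $\tilde D_0$ over the origin is a single geometric point for irreducible $W$. Hence $\tilde D\neq\bar D$. No curve computation or localization is needed for these types; the connectedness of $\tilde D$ versus the disconnectedness of $\bar D$ does all the work.
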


\begin{proof}
If $W$ is of type $ADE$, then by \cite{Bri71,Slo80} $V/W$ can be identified with the base space of a versal deformation of a singularity of the same type.  
Then by \eqref{113} $\tilde D=\Sigma^0$ is a smooth space and hence $\tilde D=\bar D$.
If $W$ is reducible, with all irreducible summands of type $ADE$, then by Proposition~\ref{28} $\tilde D$ is the disjoint union of the spaces corresponding to the summands.

Conversely, consider an irreducible $W$ not of type $ADE$ and not covered by Corollary~\ref{59}, that is, of type $B_\ell$, $C_\ell$, $F_4$, or $I_2(k)$ with $k$ even.
Then there are at least two $W$-orbits in $\A$, $D$ is reducible, and $\bar D$ has at least two connected components. 
On the other hand $\tilde D$ is connected, by Lemma~\ref{23} and Proposition
\ref{44}. 
Thus $\tilde D\neq \bar D$. 
By Proposition~\ref{28} this conclusion applies to reducible $W$ also. 
\end{proof} 

\subsection{Example~\ref{108} revisited}\label{109}

In Example~\ref{108} we asserted that in the case of $A_\ell$, the space $\tilde\A$ is isomorphic to the union $L_\ell$ of the coordinate $(\ell-1)$-planes $L_{i,j}=\{x_i=x_j=0\}$ in $\CC^{\ell+1}$.
We now prove this. 
 
Recall that a space $X$ is {\em weakly normal} if every continuous function $X\to\CC$ which is holomorphic on the smooth part of $X$ is in fact holomorphic on all of $X$.

\begin{lem}
The space $ L_\ell$ is Cohen--Macaulay and weakly normal. 
\end{lem}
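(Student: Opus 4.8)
The plan is to prove the two assertions separately, both by exploiting the very concrete description of $L_\ell$ as a subspace arrangement. Recall $L_\ell=\bigcup_{i<j}L_{i,j}\subset\CC^{\ell+1}$ with $L_{i,j}=V(x_i,x_j)$, a union of $\binom{\ell+1}{2}$ linear subspaces of codimension $2$, so $\dim L_\ell=\ell-1$ and the defining radical ideal is $\mathfrak a=\bigcap_{i<j}\ideal{x_i,x_j}$.

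First I would establish that $L_\ell$ is Cohen--Macaulay. The quickest route is a direct identification of $\mathfrak a$: a monomial $x_{i_1}\cdots x_{i_k}$ lies in $\mathfrak a$ iff $\{i_1,\dots,i_k\}$ meets every $2$-element subset of $\{1,\dots,\ell+1\}$, i.e.\ iff $k\ge\ell$, so $\mathfrak a$ is generated by the squarefree monomials of degree $\ell$. This is a squarefree monomial ideal, hence the Stanley--Reisner ideal of a simplicial complex $\Lambda$ on $\ell+1$ vertices, and one checks that $\Lambda$ is precisely the $(\ell-2)$-skeleton of the $\ell$-simplex. Skeleta of simplices are well known to be Cohen--Macaulay (indeed shellable), so $\CC[L_\ell]=S/\mathfrak a$ is Cohen--Macaulay of dimension $\ell-1$. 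Alternatively, and perhaps more in the spirit of the paper, one can argue by induction on $\ell$ using a hyperplane section or the short exact sequence splitting off one plane $L_{1,2}$ from the rest and invoking the depth lemma; but the Stanley--Reisner argument is cleanest and self-contained.

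Next I would prove weak normality. Since $L_\ell$ is a reduced union of linear subspaces meeting pairwise along coordinate subspaces, its normalization $\bar L_\ell$ is the disjoint union $\bigsqcup_{i<j}L_{i,j}$ of the irreducible components, each of which is a smooth affine space. A continuous function $f$ on $L_\ell$ that is holomorphic on the smooth locus restricts to a holomorphic function $f_{ij}$ on each component $L_{i,j}$; weak normality amounts to showing that any such compatible family (agreeing on the pairwise intersections $L_{i,j}\cap L_{i',j'}$, which are again coordinate subspaces) patches to a holomorphic function on $L_\ell$, i.e.\ that $\OO_{L_\ell}$ equals the subring of $\prod_{i<j}\OO_{L_{i,j}}$ cut out by these compatibility conditions. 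This is a statement about the conductor square being a fibre (pullback) square of \emph{rings}, which holds here because the arrangement is a \emph{seminormal}/\emph{weakly normal} configuration of linear spaces: the pairwise intersections are the only singular strata and they are reduced. I would make this precise by an explicit Mayer--Vietoris/gluing computation on coordinate rings, or by citing the general fact that a union of linear subspaces whose intersection lattice is ``nice'' (each intersection is again a member, the arrangement is ``normal'' in the sense of edge ideals) is seminormal hence weakly normal; a convenient reference is the theory of seminormality of Stanley--Reisner rings, where $S/\mathfrak a$ is seminormal for any squarefree monomial ideal, and over $\CC$ seminormal plus the relevant finiteness gives weakly normal.

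The main obstacle I anticipate is the weak normality part, specifically verifying the gluing condition cleanly: one must make sure that the compatible family $(f_{ij})$ really does come from a single continuous function on $L_\ell$ and that continuity plus holomorphy on the smooth part forces the patched object to be holomorphic across the codimension-one strata of each component (the loci where three or more planes meet). The Cohen--Macaulay statement, by contrast, is essentially immediate once the Stanley--Reisner description is in hand. If the direct gluing argument proves cumbersome, the fallback is to quote that squarefree monomial ideals define seminormal rings (so $L_\ell$ is seminormal), and that over $\CC$ a seminormal reduced scheme of finite type which is a union of smooth components is weakly normal.
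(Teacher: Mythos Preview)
Your proposal is correct but follows a different route from the paper on both points.

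For Cohen--Macaulayness, the paper identifies the defining ideal of $L_\ell$ as $\ideal{x_2\cdots x_{\ell+1},\,x_1x_3\cdots x_{\ell+1},\,\dots,\,x_1\cdots x_\ell}$ (the same generators you find) and observes that this is the ideal of maximal minors of an explicit $\ell\times(\ell+1)$ matrix, so the Hilbert--Burch theorem applies directly. Your Stanley--Reisner argument, recognizing $L_\ell$ as the $(\ell-2)$-skeleton of the $\ell$-simplex and invoking shellability, is equally valid and perhaps more conceptual; the Hilbert--Burch route has the advantage of needing no combinatorial machinery and fitting the paper's running theme of matrices presenting Cohen--Macaulay modules.

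For weak normality the contrast is sharper. The paper argues by induction on $\ell$: at any nonzero point $x\in L_\ell$ the germ is a product $L_j\times\CC^{\ell-j}$ with $j<\ell$, so the inductive hypothesis gives holomorphy away from $0$; then Cohen--Macaulayness supplies Hartogs extension across $0$. This is entirely self-contained and uses only the local product structure plus the first part of the lemma. Your approach---either the direct gluing or the appeal to seminormality of Stanley--Reisner rings combined with the equivalence of seminormality and weak normality over $\CC$---is correct in principle, but leans on external results (seminormality of squarefree monomial quotients, the seminormal/weakly-normal equivalence in characteristic zero) that would need precise citations. Your own worry about the gluing argument is well placed: making it rigorous essentially reproves the seminormality statement. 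The paper's inductive argument sidesteps all of this and is the more economical choice here.
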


\begin{proof}
Cohen--Macaulayness is well known, and follows from the Hilbert--Burch theorem: the ideal $I_\ell$ of functions vanishing on $L_\ell$ is $\ideal{x_2\cdots x_{\ell+1}, x_1x_3\cdots x_{\ell+1}, \dots, x_1\cdots x_\ell}$, and it is easy to obtain this as the ideal of maximal minors of an $\ell\times(\ell+1)$ matrix. 
For weak normality, we use induction on $\ell$: the space $L_2$ is the union of the coordinate axes in $3$-space, and weak normality can easily be checked here. 
Now suppose $\ell\geq 3$ and that the statement is true for $L_{\ell-1}$, and let $f:L_\ell\to\CC$ be continuous and holomorphic on the smooth part of $L_\ell$. 
Let $x\in L_\ell$. 
If $x_j\neq 0$ then up to permutation of coordinates, the germ $(L_\ell,x)$ is equal to the product $(\CC,x_j)\times (L_{\ell-1},(x_1,\ldots,\hat x_j,\ldots, x_{\ell+1}))$. 
It follows from the induction hypothesis that $L_\ell$ is weakly normal at $x$, and therefore $f$ is holomorphic at $x$. 
Since $L_\ell$ is Cohen--Macaulay, Hartogs's Theorem holds and therefore $f$ is holomorphic also at 0.
\end{proof}

\begin{prp}
In the case of the reflection arrangement for $A_\ell$, the space $\tilde\A$ is isomorphic to $L_\ell$.
\end{prp}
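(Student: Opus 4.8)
The plan is to realize $L_\ell$ as a finite birational partial normalization of $\A$ via an explicit linear projection, and then to identify it with $\tilde\A$ inside the common normalization $\bar S_\A$ by induction on $\ell$, the inductive step reducing the comparison to an open set over which Theorem~\ref{61} applies, and the Cohen--Macaulay property (equivalently $S_2$) bridging the remaining point over the origin.

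\emph{The projection.} Let $\rho\colon\CC^{\ell+1}\to V=\{\,\sum_ix_i=0\,\}$ be the linear map $x\mapsto x-x^\sharp$ with $x^\sharp=\tfrac1{\ell+1}\bigl(\sum_ix_i\bigr)(1,\dots,1)$; it is equivariant for the symmetric group permuting coordinates, and $\ker\rho=\CC(1,\dots,1)$. The kernel meets the component $L_{i,j}=\{x_i=x_j=0\}$ of $L_\ell$ only in $0$, so $\rho$ carries $L_{i,j}$ isomorphically onto the reflecting hyperplane $H_{i,j}=\{x_i=x_j\}$ of $\A$. As $\A=\bigcup_{i<j}H_{i,j}$ and $L_\ell=\bigcup_{i<j}L_{i,j}$, the restriction $s\colon L_\ell\to\A$ is finite and surjective, mapping each irreducible component of $L_\ell$ isomorphically onto a component of $\A$; the components being in natural bijection, $s$ is birational. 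Hence $R_\ell:=\CC[x_1,\dots,x_{\ell+1}]/I_\ell$ is, via $s^*$, a reduced finite birational extension of $S_\A$, so $S_\A\subseteq R_\ell\subseteq\bar S_\A$. Together with Proposition~\ref{43} this puts both $\tilde S_\A$ and $R_\ell$ as $S_\A$-subalgebras of $\bar S_\A$, and it suffices to show they coincide.

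\emph{Fibres and the induction.} For $x\in\A$ partition $\{1,\dots,\ell+1\}$ by $a\sim b\iff x_a=x_b$. A point $y\in L_{a,b}$ lies over $x$ exactly when $y_m=x_m-x_a$ for all $m$; this forces $a\sim b$ and depends only on the common value of $x$ on the block of $a$, so $s^{-1}(x)$ has exactly one point for each block of size $\ge2$. For such $x$ the stabilizer $W_x$ is the Young subgroup $\prod_kS_{\lambda_k}$ and the representation of $W_x$ on $V/X$ is the sum of the standard representations of the $S_{\lambda_k}$, all of type $A_{\lambda_k-1}$; by \eqref{106bii} of Theorem~\ref{106} the fibre of $\tilde\A$ over $x$ likewise has one point per block of size $\ge2$, and along the flat $X$ Theorem~\ref{61} exhibits the localization of $\tilde\A$ as $\bigsqcup_{k}\tilde\A(A_{\lambda_k-1})\times\CC^{\ell+1-\lambda_k}$, the $k$-th summand lying over the block of $a$; a one-line check gives the same local product description for $L_\ell$ along $s^{-1}(X)$, with $L_{\lambda_k-1}$ in place of $\tilde\A(A_{\lambda_k-1})$. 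Now induct on $\ell$. For $\ell=1$ both spaces are a reduced point, and for $\ell=2$ one invokes Example~\ref{108}: $\tilde\A$ is connected (Lemma~\ref{23}), finite and birational over the three concurrent lines of $\A$ with smooth branches, and different from $\A$ (Corollary~\ref{45}), hence is the union $L_2$ of the three coordinate axes in $\CC^3$. For $\ell\ge3$ every flat $X\in L(\A)$ other than $\{0\}$ has all blocks $\lambda_k\le\ell$, so by the inductive hypothesis $\tilde\A(A_{\lambda_k-1})\cong L_{\lambda_k-1}$ inside the relevant normalization, and the local descriptions show that $\tilde S_\A$ and $R_\ell$ agree, as subalgebras of $\bar S_\A$, after localizing at every prime of $S_\A$ except the irrelevant maximal ideal.

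\emph{Conclusion, and the obstacle.} Both $\tilde\A$ and $L_\ell$ are Cohen--Macaulay of dimension $\ell-1$ (Theorem~\ref{105} and the previous lemma), hence satisfy $S_2$; since $\ell-1\ge2$ and the fibre over $0$ has codimension $\ell-1$ in each, each algebra equals the global sections of its structure sheaf restricted to the complement of that fibre. As these restrictions agree inside $\bar S_\A$, so do $\tilde S_\A$ and $R_\ell$, whence $\tilde\A=\Spec\tilde S_\A=\Spec R_\ell=L_\ell$ over $\A$. The main obstacle is the middle step: one must make the abstract isomorphisms of Theorem~\ref{61} and of the inductive hypothesis visible inside the single normalization $\bar S_\A$, so that ``agreeing after localization'' genuinely means ``equal as subsheaves,'' and one must know that the braid-arrangement strata other than the origin involve only strictly smaller $A$-type factors, so the induction closes; the $S_2$-extension across the origin is then routine. (Alternatively, the inclusion $\tilde S_\A\subseteq R_\ell$ follows from the weak normality of $L_\ell$ proved in the previous lemma: any $f\in\tilde S_\A$ is continuous on $\tilde\A$, hence constant on the fibres of $\bar\A\to\tilde\A$, which by the fibre computation and Theorem~\ref{61} coincide with those of $\bar\A\to L_\ell$, so $f$ descends to a continuous function on $L_\ell$, holomorphic on the smooth locus, and thus lies in $R_\ell$; the reverse inclusion then follows from a depth count, $R_\ell/\tilde S_\A$ being a finite-length quotient of the depth-$(\ell-1)$ Cohen--Macaulay module $R_\ell$.)
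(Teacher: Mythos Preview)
Your main argument is sound and genuinely different from the paper's. Both proofs embed $L_\ell$ and $\tilde\A$ over $\A$, compute the fibres of $s\colon L_\ell\to\A$ in terms of the blocks of the coincidence partition, and close with a Hartogs/$S_2$ extension using Cohen--Macaulayness. The difference lies in how the comparison is made away from the origin. The paper works analytically: it uses the weak normality of $L_\ell$ to produce a map $\pi\colon(L_\ell,0)\to(\tilde\A,0)$ over $\A$, checks that $\pi$ is a homeomorphism via the fibre count, and then verifies that $\pi^{-1}$ is analytic by an explicit codimension-$1$ check---the only singular strata in codimension $1$ are of type $A_1+A_1$ (where both spaces are smooth) and $A_2$ (handled by the explicit computation in Example~\ref{108}). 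Your approach instead inducts on $\ell$, invoking the full local trivialization of Theorem~\ref{61} along every positive-dimensional flat to reduce to strictly smaller $A$-types. This is more structural and avoids singling out the $A_2$ calculation, at the price of the bookkeeping you flag as ``the obstacle'': one must check that the abstract product decompositions of Proposition~\ref{21}/Lemma~\ref{28} and the analogous splitting of $R_\ell$ over a flat really sit compatibly inside $(\bar S_\A)_{I_X}$. This is true (the normalization and the inclusions into it are canonical, and each block contributes a disjoint set of components), but your ``one-line check'' for the local product form of $L_\ell$ under $s$ deserves a sentence more.

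One caution about your parenthetical alternative: the weak-normality half, giving $\tilde S_\A\subseteq R_\ell$, is exactly the paper's first step and is fine. But the depth count for the reverse inclusion is not self-contained. From $\tilde S_\A\subseteq R_\ell$ with both maximal Cohen--Macaulay of rank one you only get that $R_\ell/\tilde S_\A$ has depth $\ge\ell-2$ and is supported on $\Sing\A$, which has dimension $\ell-2$; this does not force it to vanish. You need the main argument (agreement after localization away from $0$) to conclude that the quotient has finite length, and \emph{then} the depth bound $\ge\ell-2\ge 1$ (for $\ell\ge 3$) forces it to be zero. So the alternative is not an independent route but a repackaging of the same inductive input.
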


\begin{proof}
We consider the standard representation space $V:=\{(x_1,\dots,x_{\ell+1})\in\CC^{\ell+1}\mid\sum_{i=1}^{\ell+1}x_i=0\}$.
The arrangement $\A$ consists of hyperplanes $H_{i,j}:=\{x\in V\mid x_i=x_j\}$. 
Let us denote by $s\colon L_\ell\to\A$ and $t\colon \tilde\A\to \A$ the natural projections. 
Recall that $s(x)=x-x^\sharp$, where $x^\sharp$ is the Hermitian orthogonal projection of $x$ to $\CC\cdot(1,\dots,1)$. 
First we establish a natural bijection $\tilde\A\to L_\ell$. 
Let $a=(a_1,\dots,a_{\ell+1})\in\A$.
Then 
\[
s^{-1}(a)=\{x\in L_\ell\mid x=a+\lambda\cdot(1,\dots,1)\text{ for some }\lambda\in\CC\}.
\]
Define an equivalence relation $\sim$ on the set $\{1,\ldots,\ell+1\}$ by $i\sim j$ if $a_i=a_j$.  Since $a+\lambda(1,\dots,1)\in L_{i,j}$ if and only if $\lambda=-a_i=-a_j$, $s^{-1}(a)$ is in bijection with the set 
\[
C:=\{\sigma\in\{1,\ldots,\ell+1\}/_\sim\mid|\sigma|\geq 2\}
\]
of non-singleton equivalence classes for $\sim$. 
For $\sigma\in C$, set
\[
V_\sigma:=\{x\in V\mid x_i=0\text{ if } i\notin\sigma\},\quad
\A_\sigma:=\{H_{i,j}\mid i, j\in\sigma, i\neq j\}.
\]
Then the $V_\sigma$, $\sigma\in C$,  are the irreducible factors of $W_a$ with corresponding reflection arrangements $\A_\sigma$.
By Proposition~\ref{21}, locally at $a$, $\tilde\A$ is isomorphic to the disjoint union of $\tilde\A_\sigma\times X(a)$, in the notation of Definition~\ref{183}.
In particular, also the fiber $t^{-1}(a)$ can be identified with $C$. 

The natural bijections $s^{-1}(a)\to t^{-1}(a)$ give rise to a natural bijection $L_\ell\to\tilde \A$ over $\A$.
We have to show now that this bijection is biholomorphic. 
Both spaces are Cohen-Macaulay, so it is enough to prove this outside a set of codimension $2$. 
It is clearly biholomorphic over smooth points of $\A$, since here the projections $\tilde\A\to\A$ and $L_\ell\to\A$ are both biholomorphic.
The codimension-$1$ singularities of $\A$ are of type $A_1+A_1$ (a normal crossing of $2$ branches, with reducible representation) and $A_2$. 
Over points of the first kind, both $\tilde\A$ and $L_\ell$ are smooth, by Corollary \ref{45}, and so the bijection is indeed biholomorphic. 
Over points of the second kind, the argument of Example~\ref{108} shows that here too the bijection is biholomorphic.
\end{proof} 

It would be interesting to know if the space $\tilde\A$ is weakly normal for other Coxeter arrangements.

\section{Dual and Hessian rank conditions}\label{7}

Let $F=S\cdot\mm_R$ be the ideal of all positive-degree $W$-invariants.
We can identify $S/F$ with a direct summand $T$ of the $W$-module $S$, and setting $S^\alpha=T\cdot p^\alpha$, we have
\begin{equation}\label{164}
S=\bigoplus_{\alpha\in\NN^\ell}S^\alpha\supset\bigoplus_{0\ne\alpha\in\NN^\ell}S^\alpha=F
\end{equation}
as a direct sum of $W$-modules, where $p=p_1,\dots,p_\ell$.
Chevalley~\cite{Che55} showed that $T$ is the regular $W$-representation (see also \cite[p.~278]{Sol64}).
Consider the $W$-modules of exterior powers
\[
E_p=\bigwedge^pV^*.
\]
Solomon~\cite[Thm.~2 and footnote~($^2$)]{Sol64} showed that the isotypic components of $S/F$ of type $E_1\cong V^*$ and $E_{\ell-1}\cong V\otimes \det V$ are the direct sums of the projections to $S/F$ of the $W$-modules
\begin{align}\label{165}
J^j&=\ideal{\p_{x_k}(p_j)\mid k=1,\dots,\ell}_\CC,\\
\nonumber M^j&=\ideal{m^j_k\mid k=1,\dots,\ell}_\CC,\quad j=1,\dots,\ell,
\end{align}
respectively.
We may and will assume that $J^j\subset T$ and $M^j\subset T$.
By \eqref{84} and \eqref{215}, $D_j$ is the homogeneous degree of $M^j$, while $m_j$ is the homogeneous degree of $J^j$.

Let us recall the construction from the proof of \cite[Thm.~2]{Sol64}:
We denote by $I(-)$ the $W$-invariant part.
By \cite{Sol63}, the space of $W$-invariant differential forms on $V$ is 
\[
I(S\otimes E_p)=\sum_{i_1<\cdots<i_p}R\cdot dp_{i_1}\wedge\dots\wedge dp_{i_p}.
\]
Solomon~\cite[p.~282]{Sol64} considers the case where $W$ is the Weyl group of a Lie group acting on $V$; then the Killing form induces a self-duality $E_p\cong E_p^*$.
We are only interested in the cases $p=1$ and $p=\ell-1$, where both irreducibility and self-duality of $E_p$ are trivial\footnote{$E_1\cong V^*$ is self-dual due to the $W$-invariant form $p_2$ on $V$, and hence irreducible, since $V$ is irreducible. Because $\det(V)^{\otimes2}\cong\CC$ is the trivial representation, $E_{\ell-1}\cong E_1^*\otimes E_\ell\cong V\otimes \det(V)$ is self-dual.
For the same reason and irreducibility of $V$, $I(V\otimes \det(V)\otimes(V\otimes \det(V))^*)=I(V\otimes V^*)=1$, and hence $E_{\ell-1}$ is irreducible.}.
The self-duality of $E_p$ induces a $W$-isomorphism $S/F\otimes E_p\cong\Hom_\CC(E_p,S/F)$ and hence an isomorphism 
\begin{equation}\label{184}
I(S/F\otimes E_p)\cong\Hom_W(E_p,S/F).
\end{equation}
The image of $dp_i$ in $\Hom_W(E_p,S/F)$ has image $J^i$, and the image of $dp_1\wedge\dots\wedge\widehat{dp_i}\wedge\dots\wedge dp_\ell$ has image $M^i$.

Using \eqref{164}, 
\begin{equation}\label{171}
\bigoplus_{j=1}^\ell\bigoplus_{\alpha\in\NN^\ell}M^jp^\alpha\quad\text{and}\quad\bigoplus_{j=1}^\ell\bigoplus_{0\ne\alpha\in\NN^\ell}M^jp^\alpha
\end{equation}
are the isotypic components of type $E_{\ell-1}$ of $S$ and $F$ respectively.
In particular, we have the following

\begin{lem}\label{172}
The isotypic component of $F$ of type $E_{\ell-1}$ lies in $ F\cdot I_\A$.\qed
\end{lem}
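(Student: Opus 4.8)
The plan is to unwind the description of the isotypic components given in the paragraphs just above the statement, and reduce the claim to an elementary observation about how $M^j$ sits inside $I_\A$. Recall from \eqref{171} that the isotypic component of $F$ of type $E_{\ell-1}$ is $\bigoplus_{j=1}^\ell\bigoplus_{0\ne\alpha\in\NN^\ell}M^jp^\alpha$, so it suffices to show that each summand $M^jp^\alpha$ with $\alpha\ne 0$ lies in $F\cdot I_\A$. Since $p^\alpha\in F$ whenever $\alpha\ne 0$, this will follow at once from the inclusion $M^j\subseteq I_\A$, i.e.\ from the statement that each generator $m^j_k$ of the $\CC$-vector space $M^j$ in \eqref{165} lies in the Fitting ideal $I_\A=\Fit^1_S(J_\A)$.

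Now $I_\A$ is, by \eqref{71} and the exact sequence \eqref{26}, precisely the ideal generated by the $(\ell-1)\times(\ell-1)$-minors of $J^t$, equivalently of $J$; and by \eqref{14} these minors are exactly the entries $m^i_j$ of $\ad(J^t)$. Hence $m^j_k\in I_\A$ by definition of $I_\A$, for every $j,k$. This gives $M^j\subseteq I_\A$ as $\CC$-vector spaces (after the harmless identification $M^j\subset T$ fixed in the paragraph preceding the lemma), and therefore $M^jp^\alpha\subseteq I_\A\cdot F$ for $\alpha\ne 0$. Summing over $j$ and over $0\ne\alpha\in\NN^\ell$ yields that the whole isotypic component of $F$ of type $E_{\ell-1}$ is contained in $F\cdot I_\A$, as claimed.

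There is essentially no obstacle here: the content of the lemma is really just the bookkeeping identity ``$M^j\subset I_\A$'', which is immediate once one recognizes that the $m^j_k$ are by construction the submaximal minors generating $I_\A$. The only point requiring a word of care is that the decomposition \eqref{171} is stated with the $M^j$ regarded inside the chosen complement $T\cong S/F$ of $F$ in $S$, whereas $I_\A$ is an ideal of $S$; but the projection $S\onto S/F$ is $W$-equivariant and the isotypic-component statement \eqref{171} is obtained precisely by applying it, so the inclusion $m^j_k\in I_\A$ in $S$ transports correctly to the stated inclusion of isotypic components. This is exactly why the paragraph before the lemma says ``we may and will assume that $M^j\subset T$.''
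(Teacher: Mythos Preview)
Your proof is correct and takes essentially the same approach as the paper, which simply marks the lemma with \qed\ as an immediate consequence of \eqref{171}: the isotypic component of $F$ of type $E_{\ell-1}$ is $\bigoplus_{j}\bigoplus_{\alpha\ne 0}M^jp^\alpha$, and since the $m^j_k$ are by definition the submaximal minors generating $I_\A$ while $p^\alpha\in F$ for $\alpha\ne 0$, each summand lies in $F\cdot I_\A$. Your final paragraph correctly addresses the only mildly delicate point, namely the convention $M^j\subset T$, which ensures the inclusion $m^j_k\in I_\A$ in $S$ is compatible with the isotypic decomposition.
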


It follows that (grc) can be checked modulo $F$. 

\begin{dfn}
We say that the {\em graded rank condition mod $F$} holds for $\A$ if $M^j\subset S\cdot M^\ell +F$ for all $j=1,\dots,\ell-1$. 
\end{dfn}

\begin{lem}\label{173}
The graded rank condition mod $F$ is equivalent to the graded rank condition for $\A$.
\end{lem}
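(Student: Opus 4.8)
The plan is to prove the equivalence by showing each direction, using the structural result \eqref{171} that identifies the isotypic component of $S$ of type $E_{\ell-1}$ as $\bigoplus_{j=1}^\ell\bigoplus_{\alpha\in\NN^\ell}M^jp^\alpha$, together with Lemma~\ref{172}. One direction is essentially formal: if (grc) holds for $\A$, then $M^j\subseteq I_\A$ for $j=1,\dots,\ell-1$ (since $I_\A$ is generated by the entries of the $\ell$'th row of $\ad(J^t)$, i.e. by $M^\ell$), so a fortiori $M^j\subseteq M^\ell+F$, which is (grc) mod $F$.

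For the converse, suppose $M^j\subseteq M^\ell+F$ for $j=1,\dots,\ell-1$. First I would write, for a fixed $j$, a generator $m^j_k$ of $M^j$ as $m^j_k = n_k + f_k$ with $n_k$ in the $\CC$-span of $M^\ell$ and $f_k\in F$. The point is to control $f_k$: since $m^j_k$ and the elements of $M^\ell$ are homogeneous of degrees $m_j$ and $m_\ell$ respectively — wait, more precisely $M^j$ has homogeneous degree $D_j$ and $M^\ell$ has degree $D_\ell$ by the remark after \eqref{165} — one can take $n_k$ to be the homogeneous degree-$D_j$ part of a $\CC$-linear combination of the $m^\ell_i$ times monomials $p^\alpha$ in the invariants, and $f_k$ to be homogeneous of degree $D_j$ lying in $F$. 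The key observation is then that $f_k = m^j_k - n_k$ lies in the isotypic component of $S$ of type $E_{\ell-1}$ (being a difference of two such elements) and simultaneously in $F$; hence by \eqref{171} it lies in the isotypic component of $F$ of type $E_{\ell-1}$, which by Lemma~\ref{172} is contained in $F\cdot I_\A$. So $f_k\in F\cdot I_\A\subseteq I_\A$, and $n_k\in M^\ell\cdot R\subseteq I_\A$ (as $I_\A$ is the $S$-ideal, hence $R$-submodule, generated by $M^\ell$). Therefore $m^j_k\in I_\A$ for all $j<\ell$ and all $k$, which says exactly that the sub-maximal minors of $J$ obtained by deleting any row lie in the ideal generated by those obtained by deleting the $\ell$'th row. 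Combined with the fact (Lemma~\ref{161}) that $I_\A$ is minimally generated by $\ell$ minors, this forces $I_\A$ to equal the ideal generated by the maximal minors of $J$ with its $\ell$'th row deleted, which is (grc).

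I expect the main obstacle to be the bookkeeping in the converse: one must be careful that the decomposition $m^j_k = n_k + f_k$ can be chosen so that the ``invariant-free'' part $f_k$ genuinely sits inside the $E_{\ell-1}$-isotypic component of $S$. This is where the hypotheses $J^j\subseteq T$ and $M^j\subseteq T$ (imposed just before Lemma~\ref{172}) together with the direct-sum decomposition \eqref{164} do the work: writing everything relative to the splitting $S=\bigoplus_\alpha S^\alpha$ and using that $M^\ell p^\alpha$ exhausts the $E_{\ell-1}$-part in each graded piece, the subtraction of the degree-$D_j$ part of a combination $\sum_i c_i(p) m^\ell_i$ from $m^j_k$ lands in $F$ and remains of type $E_{\ell-1}$. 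Once that is set up cleanly, invoking Lemma~\ref{172} and Lemma~\ref{161} closes the argument. A minor point to address along the way is that $M^\ell\cdot R = M^\ell\cdot S\cap(\text{degree }D_\ell\text{ part})\cdots$ — more simply, that the $R$-module and $S$-ideal generated by $M^\ell$ agree modulo $F$ on the relevant graded pieces — but this is immediate from $S = R\oplus F$ as $R$-modules.
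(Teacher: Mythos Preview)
Your approach is essentially the paper's, but the execution has a genuine gap at the closing step.

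You correctly reach (modulo the caveat below) a decomposition $m^j_k = n_k + f_k$ with $n_k \in S\cdot M^\ell$ and $f_k \in F\cdot I_\A$. From this the correct conclusion is
\[
I_\A \subset S\cdot M^\ell + F\cdot I_\A,
\]
and then Nakayama's lemma (using $F\subset\mm_S$) gives $I_\A = S\cdot M^\ell$, which is (grc). This Nakayama step is exactly how the paper finishes, and it is what you are missing. Instead you write $f_k\in F\cdot I_\A\subseteq I_\A$ and $n_k\in M^\ell\cdot R\subseteq I_\A$, concluding $m^j_k\in I_\A$ --- but that is tautological, since $m^j_k$ is a submaximal minor and $I_\A$ is by definition generated by all such minors. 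Your parenthetical ``as $I_\A$ is the $S$-ideal \dots\ generated by $M^\ell$'' is precisely the statement (grc) you are trying to prove, so the argument is circular. The appeal to Lemma~\ref{161} compounds this: that lemma already establishes (grc) via the F-manifold route of Section~\ref{190}, so invoking it here defeats the purpose of the independent algebraic approach of Section~\ref{7}.

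A secondary point: for $f_k = m^j_k - n_k$ to lie in the $E_{\ell-1}$-isotypic component (so that Lemma~\ref{172} applies), you need $n_k$ to lie there as well. You assert without justification that $n_k$ can be taken with coefficients in $R$ (i.e.\ $n_k\in R\cdot M^\ell$); a priori the hypothesis only gives $n_k\in S\cdot M^\ell$, and $S\cdot M^\ell$ is not contained in the isotypic component. The paper handles this cleanly by \emph{averaging}: starting from a $\CC$-linear $\alpha\in\Hom_\CC(M^j,M^\ell\otimes_\CC S_{D_j-D_\ell})$ with $\pi_*\mu_*(\alpha)=\id_{M^j}$, one replaces $\alpha$ by $\gamma=\alpha^\#\in\Hom_W(M^j,M^\ell\otimes_\CC S_{D_j-D_\ell})$. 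Then $\mu_*(\gamma)-\id_{M^j}$ is a $W$-map $M^j\to F$, so its image automatically lies in the $E_{\ell-1}$-isotypic part of $F$, hence in $F\cdot I_\A$ by Lemma~\ref{172}.
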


\begin{proof}
Consider the maps of $W$-modules
\begin{equation}\label{175}
\xymat{
\phi_*\colon\Hom_\CC(M^j,M^\ell\otimes_\CC S_{D_j-D_\ell})\ar[r]^-{\mu_*}&\Hom_\CC(M^j,S_{D_j})\ar[r]^-{\pi_*}&\Hom_\CC(M^j,T_{D_j})
}
\end{equation}
induced by the composition of $W$-linear maps $\phi=\pi\circ\mu$, where
\[
\mu\colon S\otimes_\CC S\to S\quad\text{and}\quad\pi\colon S\onto S/F=T
\]
are the product in $S$ and the canonical projection to $T$.
By hypothesis, there is a $\CC$-linear map $\alpha\in\Hom_\CC(M^j,M^\ell\otimes_\CC S_{D_j-D_\ell})$ such that $\phi_*(\alpha)\in\Hom_\CC(M^j,M^j)$ is the identity map.
Now averaging yields 
\[
\gamma=\alpha^\#\in\Hom_W(M^j,M^\ell\otimes_\CC S_{D_j-D_\ell}),\quad\phi_*(\gamma)=\id_{M^j}.
\]
Using Lemma~\ref{172}, we find that
\[
\mu_*(\gamma)-\id_{M^j}\in\Hom_W(M^j,F)=\Hom_W(M^j,F\cdot I_\A).
\]
This proves that
\[
I_\A\subset S\cdot M^\ell+F\cdot I_\A,
\]
and hence $I_\A=S\cdot M^\ell$ by Nakayama's lemma.
\end{proof}

By Solomon's result mentioned above, the $W$-equivariant Gorenstein pairing on $S/F$ induces a non-degenerate pairing of the isotypic components of type $E_1$ and $E_{\ell-1}$ into the unique irreducible summand of type $E_\ell\cong\det(V)$,
\[
\bigoplus_{i=1}^\ell J^i\otimes\bigoplus_{j=1}^\ell M^j\to\CC\cdot\Delta.
\]
Since the element 
\[
\sum_{i=1}^\ell\p_{x_i}(p_j)\otimes m^j_i\in J^j\otimes M^j
\]
maps to $\Delta=\det J$ by Laplace expansion of the determinant along the $j$'th row, we obtain induced non-degenerate pairings
\begin{equation}\label{177}
J^j\otimes M^j\to\CC\cdot\Delta,\quad j=1,\dots,\ell.
\end{equation}
For $j<k$, we have
\begin{align}\label{180}
\Hom_W(J^j,J^k)\cong\End_W(E_1)&\cong\End_W(E_{\ell-1}^*\otimes E_\ell)\\
\nonumber&\cong\End_W(E_{\ell-1}^*)\cong\Hom_W(M^k,M^j),
\end{align}
where $\mu_*(\alpha)\in\Hom_W(J^j,J^k)$ induced by $\alpha\in\Hom_W(J^j,J^j\otimes S_{m_k-m_j})$ corresponds to $\mu_*(\beta)\in\Hom_W(M^k,M^j)$ induced by $\beta=\alpha^t\in\Hom_W(M^k,M^k\otimes S_{D_j-D_k})$.
Note here that $m_k-m_j=D_j-D_k$ by \eqref{215}.
Because of the non-degenerate $W$-pairing \eqref{177}, $\mu_*(\alpha)$ is an isomorphism exactly if $\mu_*(\beta)$ is an isomorphism.

\begin{dfn}\label{182}
We say that the \emph{dual (graded) rank condition} (drc) holds for $\A$ if $J^\ell\subset S\cdot J^j+F$ for all $j=1,\dots,\ell-1$.
\end{dfn}

\begin{rmk}
The definition of (drc) is given as an equality in $S/F$ because in general $J^\ell\not\subset S\cdot J^j$, though the inclusion holds trivially for $j=1$. 
\end{rmk}

\begin{lem}\label{179}
The graded rank condition mod $F$ is equivalent to the dual rank condition for $\A$.
\end{lem}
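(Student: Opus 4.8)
The plan is to show, separately for each $j\in\{1,\dots,\ell-1\}$, that both conditions translate into the existence of a $W$-equivariant multiplication operator which is an isomorphism, and that the two resulting requirements correspond to one another under the duality set up in \eqref{177}--\eqref{180}.

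First I would record the degree coincidence. Fix $j<\ell$ and set $d=m_\ell-m_j$, which by \eqref{215} also equals $D_j-D_\ell$. Multiplication in $S$ followed by the projection $\pi\colon S\onto S/F=T$ yields $W$-linear maps
\[
\phi_j\colon J^j\otimes_\CC S_d\longrightarrow T_{m_\ell},\qquad\psi_j\colon M^\ell\otimes_\CC S_d\longrightarrow T_{D_j},
\]
and, since $J^\ell$ and $M^j$ are homogeneous of degrees $m_\ell$ and $D_j$ respectively, the dual rank condition for $j$ is literally the inclusion $J^\ell\subseteq\img\phi_j$, and the graded rank condition mod $F$ for $j$ is literally the inclusion $M^j\subseteq\img\psi_j$ — this is exactly how each hypothesis is used in the proof of Lemma~\ref{173}. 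Since $J^\ell$ is irreducible, a containment $J^\ell\subseteq\img\phi_j$ allows one to split the $W$-surjection $\phi_j^{-1}(J^\ell)\onto J^\ell$ (Maschke's theorem) and, after composing with a $W$-isomorphism $J^j\xrightarrow{\ \sim\ }J^\ell$ between copies of $E_1$, to produce $\alpha\in\Hom_W(J^j,J^j\otimes_\CC S_d)$ whose associated multiplication map $\mu_*(\alpha)$ of \eqref{180} is an isomorphism; conversely any such $\alpha$ forces $J^\ell=\img\mu_*(\alpha)\subseteq\img\phi_j$. Thus (drc) for $j$ is equivalent to the existence of $\alpha\in\Hom_W(J^j,J^j\otimes_\CC S_d)$ with $\mu_*(\alpha)$ an isomorphism; the same averaging argument, now with $(J^j,J^\ell)$ replaced by $(M^\ell,M^j)$, shows that (grc) mod $F$ for $j$ is equivalent to the existence of $\beta\in\Hom_W(M^\ell,M^\ell\otimes_\CC S_d)$ with $\mu_*(\beta)$ an isomorphism.

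Now I would invoke \eqref{180} in the case $k=\ell$: the transpose $\alpha\mapsto\alpha^t$ is a bijection between $\Hom_W(J^j,J^j\otimes_\CC S_d)$ and $\Hom_W(M^\ell,M^\ell\otimes_\CC S_d)$, and, because of the non-degenerate $W$-pairing \eqref{177}, $\mu_*(\alpha)$ is an isomorphism exactly when $\mu_*(\alpha^t)$ is. Hence an $\alpha$ as above exists if and only if a $\beta$ as above exists, so (drc) for $j$ holds if and only if (grc) mod $F$ holds for $j$; letting $j$ run over $1,\dots,\ell-1$ gives the lemma.

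The step I expect to need the most care is the passage from the literal ideal-membership statements to honest $W$-equivariant multiplication operators, and in particular the identification of $\mu_*(\alpha)$ as a map onto $J^\ell$ (resp.\ of $\mu_*(\beta)$ onto $M^j$). On the $J^\ell$-side this is automatic, since $m_\ell=h-1$ is the unique largest exponent by \eqref{30}, so $J^\ell$ is the entire isotypic component of $T_{m_\ell}$ of type $E_1$; the same is automatic on the $M^j$-side whenever the exponents are pairwise distinct. The only exceptional case is $D_{2m}$, where the exponent $2m-1$ occurs with multiplicity two: there I would instead run the argument degree by degree with the full isotypic components $\bigoplus_{m_{j'}=m_j}J^{j'}$ and $\bigoplus_{m_{j'}=m_j}M^{j'}$, using that Solomon's Gorenstein pairing restricts to a \emph{perfect} pairing between them, so that a dimension count forces the image of the multiplication map to contain the whole component once its projection is surjective. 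This is the only place where anything beyond the formal duality \eqref{180} is used; no further computation with the basic invariants is required.
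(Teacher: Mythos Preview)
Your argument is correct and follows the same route as the paper: both reduce the equivalence, for each fixed $j$, to the transpose correspondence \eqref{180}, first passing to $W$-equivariant maps by averaging and then invoking the nondegenerate pairing \eqref{177} to see that a multiplication operator is an isomorphism if and only if its transpose is. The paper writes out only one implication and declares the other analogous.

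One remark on your handling of $D_{2m}$: the extra care you take there can be avoided altogether by choosing the source and target of $\beta$ as the paper does. Rather than taking $\beta\in\Hom_W(M^\ell,M^\ell\otimes S_d)$ and then worrying about which $E_{\ell-1}$-summand of $T_{D_j}$ the image of $\mu_*(\beta)$ lands in, the paper takes $\beta\in\Hom_\CC(M^j,M^\ell\otimes S_d)$ with $\pi_*\mu_*(\beta)=\id_{M^j}$; this exists directly from the hypothesis $M^j\subset S\cdot M^\ell+F$, and averaging preserves the identity. Its transpose with respect to the pairings \eqref{177} (which pair $M^j$ with $J^j$ and $M^\ell$ with $J^\ell$ specifically) is then a $W$-map $\alpha\colon J^\ell\to J^j\otimes S_d$ with $\pi_*\mu_*(\alpha)$ nonzero; since $J^\ell$ is the full $E_1$-isotypic component of $T_{m_\ell}$ by \eqref{30}, this forces $\pi_*\mu_*(\alpha)\in\CC^*\cdot\id_{J^\ell}$, which is exactly (drc) for $j$. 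No separate treatment of repeated exponents is needed, because the duality is built from \eqref{177} index by index rather than from the full isotypic component.
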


\begin{proof} 
We show that (grc) mod $F$ implies (drc). The opposite implication is proved in just the same way.
Fix $j\in\{1,\dots,\ell-1\}$.  
By (grc) mod $F$, there is a $\beta\in\Hom_\CC(M^j,M^\ell\otimes S_{D_j-D_\ell})$ inducing the identity map $\id_{M^j}=\pi_*\mu_*(\beta)\in\Hom_\CC(M^j,M^j)$.
By averaging, we can turn $\beta$ into a $W$-homomorphism. 
The homomorphism $\mu_*(\beta)$ is non-zero modulo $F$ and \eqref{180} yields a corresponding dual map $\mu_*(\alpha)\in\Hom_W(J^j,S_{D_\ell})$ induced by $\alpha:=\beta^t\in\Hom_W(J^\ell,J^j\otimes S_{m_\ell-m_j})$. 
This shows that (drc) holds. 
\end{proof}

By Lemma~\ref{173}, we deduce the following equivalence that combined with Theorems~\ref{206} and \ref{160} and Lemma~\ref{161} proves Corollary~\ref{110}.

\begin{prp}\label{119}
The dual graded rank condition is equivalent to the (graded) rank condition for $\A$.\qed
\end{prp}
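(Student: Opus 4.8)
The plan is to pass through the auxiliary ``graded rank condition mod $F$'' introduced between Lemma~\ref{161} and Definition~\ref{182}. By Lemma~\ref{173} this auxiliary condition is equivalent to the graded rank condition (grc) for $\A$, and by Lemma~\ref{179} it is equivalent to the dual graded rank condition (drc) for $\A$; composing the two equivalences gives at once that (grc) and (drc) are equivalent, which is the claim. No further argument is needed, and for reducible $W$ one simply reduces to the irreducible summands, since all three conditions are imposed summand by summand.

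Where the substance actually lies — and where I would concentrate the work were those two lemmas not already available — is in two steps. First, one must justify that (grc) can be tested modulo the invariant ideal $F=S\cdot\mm_R$: this is Lemma~\ref{172}, that the $E_{\ell-1}$-isotypic part of $F$ already lies in $F\cdot I_\A$, which lets an inclusion $M^j\subset M^\ell+F$ be upgraded to $M^j\subset S\cdot M^\ell$ by averaging a $\CC$-linear witness to a $W$-linear one and then applying Nakayama. Second, and this is the genuinely delicate point, one invokes the $W$-equivariant self-dualities $E_1\cong V^*$ and $E_{\ell-1}\cong V\otimes\det(V)$ together with Solomon's identification of the $E_1$- and $E_{\ell-1}$-isotypic parts of the coinvariant algebra $S/F$ with the spans $J^j$ of the gradients of the $p_j$ and $M^j$ of the sub-maximal minors of $J$. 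Under the Gorenstein pairing on $S/F$ these isotypic parts are dual, and the explicit pairings $J^j\otimes M^j\to\CC\cdot\Delta$ of \eqref{177} coming from Laplace expansion of $\det J$ identify $\Hom_W(M^k,M^j)$ with $\Hom_W(J^j,J^k)$ by transposition; crucially, a map in one of these spaces is an isomorphism exactly when its transpose is non-zero modulo $F$, and this is what carries ``$M^j\subset M^\ell+F$'' over to ``$J^\ell\subset S\cdot J^j+F$'' and back.

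Reconstructed from scratch, I expect the main obstacle to be precisely this transposition-plus-nondegeneracy step: matching the degrees ($m_k-m_j=D_j-D_k$ via \eqref{215}), ensuring the averaged homomorphisms remain non-zero modulo $F$, and checking that the self-dualities of $E_1$ and $E_{\ell-1}$ are genuinely $W$-equivariant (which rests on irreducibility of $V$ and on $\det(V)^{\otimes 2}$ being trivial). Granting Lemmas~\ref{172}, \ref{173} and \ref{179}, however, Proposition~\ref{119} itself is immediate by transitivity.
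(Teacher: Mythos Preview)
Your proposal is correct and follows exactly the paper's approach: the proposition is stated with a \qed because it is immediate from combining Lemma~\ref{173} (grc $\Leftrightarrow$ grc mod $F$) with Lemma~\ref{179} (grc mod $F$ $\Leftrightarrow$ drc), precisely as you say. Your additional commentary on where the real work lies (Lemma~\ref{172} for the Nakayama step, and the transposition via the Gorenstein pairing and Solomon's description for Lemma~\ref{179}) accurately identifies the substance behind those lemmas.
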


The following property refines (grc) by a statement about the $S$-coefficients of $J^j$ in the condition in Definition~\ref{182}.
By \cite[(2.14)~Lem.]{OS88}, the Hessian
\[
\Hess(p)\colon\Der_S\to\Omega_S^1,\quad \Hess(p)(\delta):=\sum_{i=1}^\ell\delta(\p_{x_i}(p))dx_i,
\]
is $W$-equivariant for $p\in R$.
Note that $\Hess(p_1)$ is a $W$-isomorphism which induces our identification of $dp_i$ with a vector field $\eta_i$ in \eqref{40}.
By abuse of notation, we identify
\[
\Hess(p)=\Hess(p)\circ\Hess(p_1)^{-1}\in\End_W(\Omega_S^1)
\]
for $p\in R$.
Using $\Omega_S^1=S\otimes E_1$ and passing to the quotient by $F$, $\Hess(p)$ then induces an element of $\End_W(S/F\otimes E_1)$ and hence of $\End_W(I(S/F\otimes E_1))$.
By \eqref{184}, $\Hess(p)$ thus induces a map
\[
\hbar(p)\in\End_W(\Hom_W(E_1,S/F))
\]
which operates on $W$-submodules of type $V^*$ by passing to the image in $\Hom_W(E_1,S/F)$.

\begin{dfn}\label{185}
We say that the \emph{Hessian (dual graded) ring condition} (Hrc) holds for $\A$ if, for any $j$, there is an $i$, such that $m_i+m_j=w_\ell$ and $\Hess(p_i)(\eta_j)\not\in F\Omega^1_S$.
In case $m_1,\dots,m_\ell$ are pairwise different, this means that $\Hess(p_i)(\eta_{\ell-i+1})\not\in F\Omega^1_S$.
\end{dfn}

\begin{lem}\label{181}
The Hessian rank condition implies the dual ring condition for $\A$.
\end{lem}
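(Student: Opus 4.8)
The plan is to unwind the definitions and show that (Hrc) produces, for each $j$, an explicit $W$-homomorphism $J^\ell\to S\cdot J^j+F$ of the kind required by the dual rank condition (drc), which by Proposition~\ref{119} is equivalent to (rc) for $\A$. Concretely, given $j$, choose $i$ with $m_i+m_j=w_\ell$ and $\Hess(p_i)(\eta_j)\notin F\Omega^1_S$, as supplied by Definition~\ref{185}. Recall from \eqref{40} that $\eta_j$ is the vector field corresponding to $dp_j$ under $\Hess(p_1)^{-1}$, so the class of $dp_j$ in $\Hom_W(E_1,S/F)$ spans the copy of $J^j$, and similarly $dp_\ell\leftrightarrow\eta_\ell$ spans $J^\ell$. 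The key point is that $\hbar(p_i)$, as an element of $\End_W(\Hom_W(E_1,S/F))$, sends the line spanned by the class of $\eta_j$ to the line spanned by the class of $\Hess(p_i)(\eta_j)$.

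First I would observe that $\Hess(p_i)(\eta_j)$ is $S$-homogeneous of degree $m_i+m_j=w_\ell=m_\ell+1$ — wait, one must be careful with the degree bookkeeping here: after composing with $\Hess(p_1)^{-1}$ (which drops degree by $\deg p_1-1=1$), the operator $\Hess(p_i)$ raises degree by $\deg p_i-2=m_i-1$, so $\Hess(p_i)(\eta_j)$ has degree $m_j+m_i-1=w_\ell-1=m_\ell$, matching the degree $m_\ell$ of $J^\ell$ as recorded after \eqref{165}. Thus $\Hess(p_i)(\eta_j)$, being nonzero mod $F$ by (Hrc) and $W$-equivariant, has image in $S/F$ spanning a submodule of type $V^*$ in degree $m_\ell$; by Solomon's result (the isotypic components of $S/F$ of type $E_1$ are the projections of the $J^k$, and these occur in the pairwise distinct degrees $m_k$), that submodule must be precisely the image of $J^\ell$. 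Hence the class of $\Hess(p_i)(\eta_j)$ generates the same $W$-submodule of $S/F$ as $J^\ell$.

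Next I would package this as the required inclusion. The map $\delta\mapsto\Hess(p_i)(\delta)$ is $S$-linear in the sense that $\Hess(p_i)(g\delta)=g\,\Hess(p_i)(\delta)$ for $g\in S$; in particular the $S$-submodule of $\Omega^1_S$ generated by $\Hess(p_i)(\eta_j)$ lies inside $S\cdot J^j$ before passing to the quotient (since $\eta_j$ lives in the rank-one $S$-summand $\Omega^1_S$ and $\Hess(p_i)$ is an $S$-endomorphism sending the generator $\eta_j$ of $J^j\otimes_\CC S$ into $J^j\otimes_\CC S$). Therefore the image of $J^\ell$ in $S/F$, which by the previous paragraph coincides with the image of $\CC\cdot\Hess(p_i)(\eta_j)$, is contained in the image of $S\cdot J^j$; that is, $J^\ell\subset S\cdot J^j+F$. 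Since $j\in\{1,\dots,\ell-1\}$ was arbitrary, this is exactly (drc) in the sense of Definition~\ref{182}.

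I expect the main obstacle to be the degree/identification bookkeeping: one must check carefully that $\Hess(p_i)(\eta_j)$ lands in the correct graded piece ($m_\ell$) and that, among the isotypic $E_1$-components of $S/F$, the one in degree $m_\ell$ is forced to be $J^\ell$ — this is where the hypothesis $m_i+m_j=w_\ell$ and the fact (from \eqref{30}) that the relevant exponents are distinct are used. A subtler point is justifying that a nonzero $W$-equivariant map of irreducible modules of type $V^*$ landing in the $m_\ell$-graded piece of $S/F$ actually has image equal to (not merely isomorphic to) the distinguished copy $J^\ell$; this follows because by Solomon's theorem the $m_\ell$-graded, $E_1$-isotypic part of $S/F$ is exactly the projection of $J^\ell$ and is a single irreducible copy, so any nonzero equivariant image must fill it. Once these identifications are in place, the rest is formal, and the conclusion matches Lemma~\ref{179} feeding into Proposition~\ref{119}.
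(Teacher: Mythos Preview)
Your argument is correct and follows essentially the same route as the paper's: show that $\hbar(p_i)(J^j)$ is a nonzero $W$-submodule of type $E_1$ in degree $m_\ell$ of $S/F$, hence equals $J^\ell$ by Solomon and \eqref{30}, and then observe that the coefficients of $\Hess(p_i)(\eta_j)$ lie in $S\cdot J^j$. The parenthetical about ``$\eta_j$ [as] the generator of $J^j\otimes_\CC S$'' is garbled---the point is simply that $\Hess(p_i)$ is given by an $S$-matrix, so the $dx_k$-coefficients of $\Hess(p_i)(\eta_j)$ are $S$-combinations of the $\partial_{x_l}(p_j)$---but the substance is right.
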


\begin{proof}
(Hrc) means that $\hbar(p_i)(J^j)\subset(S/F)_{m_\ell}$ is non-zero.
By $W$-equivariance of $\hbar(p_i)$, the latter is then a non-trivial $W$-submodule of $(S/F)_{m_\ell}$ of type $E_1$.
Then it must coincide with $J_\ell$, which is the only such $W$-module in this degree by \eqref{30}.
\end{proof}

\begin{thm}\label{221}
The Hessian rank condition holds for $\A$ if $W$ is not of type $E_6$, $E_7$, or $E_8$.
\end{thm}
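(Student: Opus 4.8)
The plan is to verify the Hessian rank condition of Definition~\ref{185} directly, type by type, after two reductions that dispose of the extreme indices. Assume first that $W$ is irreducible (for reducible $W$ the condition decomposes over the summands, since the basic invariants of a product are those of the factors, and one argues as for (grc) and (drc)). For $j=\ell$ take $i=1$: in standard coordinates one computes $\Hess(p_1)(\eta_\ell)=2\,dp_\ell$, which lies outside $F\Omega^1_S$ because $J^\ell\subset T$ by our normalization. For $j=1$ take $i=\ell$: since $\eta_1$ corresponds to $dp_1=2\sum_k x_k\,dx_k$, it is twice the Euler field, so Euler's formula gives $\Hess(p_\ell)(\eta_1)=2m_\ell\,dp_\ell\notin F\Omega^1_S$; and $m_1+m_\ell=m_\ell+m_1=h$ in both cases. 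In particular (Hrc) holds whenever $\ell\le 2$, which settles all dihedral types $I_2(k)$, including $G_2=I_2(6)$. For $\ell\ge 3$ it remains to treat $2\le j\le\ell-1$: for each such $j$ one must produce an $i$ with $m_i+m_j=h$ (so $i=\ell-j+1$ up to the ambiguity from repeated exponents) and $\Hess(p_i)(\eta_j)\notin F\Omega^1_S$. By $W$-equivariance of $\hbar(p_i)$, the image $\hbar(p_i)(J^j)$ is a $W$-submodule of $(S/F)_{m_\ell}$ of type $E_1$, and by \eqref{30} the module $J^\ell$ is the only one; so it suffices in each case to check that $\Hess(p_i)(\eta_j)$ is nonzero modulo $F\Omega^1_S$.

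For the families $A_\ell$ and $B_\ell=C_\ell$ this is a one-line computation with power-sum invariants. Take $p_k=\sum_i x_i^{2k}$ for $B_\ell$, respectively the restriction of $\sum_i x_i^{k+1}$ to $\{\sum_i x_i=0\}$ for $A_\ell$. Then each Hessian matrix $\Hess(p_k)$ is diagonal, and a direct evaluation shows that whenever $m_i+m_j=h$ the form $\Hess(p_i)(\eta_j)$ is a nonzero multiple of $dp_\ell$: in each coordinate the relevant power of $x$ equals $m_i+m_j-1=m_\ell=\deg p_\ell-1$. Thus (Hrc) holds for $A_\ell$ and $B_\ell$, with the sharper conclusion that $\hbar(p_i)(J^j)=J^\ell$ exactly.

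For $D_\ell$ one argues similarly, with basic invariants $p_k=\sum_i x_i^{2k}$ for $k=1,\dots,\ell-1$ together with $q=x_1\cdots x_\ell$. Here the Hessians of the power sums are still diagonal but $\Hess(q)$ has only off-diagonal entries, and the exponent $\ell-1$ coming from $q$ may coincide with a power-sum exponent; one therefore checks, by hand, the finitely many pairs $(m_i,m_j)$ with $m_i+m_j=h=2\ell-2$, in particular the pair in which both $i$ and $j$ index the invariant $q$, where the claim $\Hess(q)(\eta_q)\notin F\Omega^1_S$ has to be verified against the invariant ideal. Finally, the exceptional cases $F_4$, $H_3$, $H_4$ are handled by explicit computation with the classical basic invariants: for $H_3$ (degrees $2,6,10$) only the pair $(j,i)=(2,2)$ must be checked, for $F_4$ (degrees $2,6,8,12$) the pairs $(2,3)$ and $(3,2)$, and for $H_4$ (degrees $2,12,20,30$) again $(2,3)$ and $(3,2)$; in each case one confirms that $\Hess(p_i)(\eta_j)$ does not lie in $F\Omega^1_S$.

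\textbf{Main obstacle.} The genuinely awkward points are the $D_\ell$ verification when degrees coincide — most notably $D_4$, with degrees $2,4,4,6$, where the triality and the two invariants of degree $4$ must be disentangled — and the size of the $H_4$ computation, which because of the degree-$30$ invariant is best carried out with a computer algebra system. The types $E_6$, $E_7$ and $E_8$ are not covered by this argument: the nontrivial pairs there (for $E_6$, those with $m_i+m_j=12$ other than $(1,11)$) would require a substantially larger computation, and whether (Hrc) holds for them is left open.
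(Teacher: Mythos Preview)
Your overall strategy coincides with the paper's: a case-by-case verification, with the Euler-field reductions disposing of $j=1$ and $j=\ell$ (hence of all $\ell\le 2$ and the dihedral types), the diagonal-Hessian computation with power sums for $A_\ell$ and $B_\ell$, and explicit machine checks for $F_4$, $H_3$, $H_4$ (the paper uses \texttt{Macaulay2} with Mehta's basic invariants). Your identification of the pairs to be checked in the exceptional cases is also correct.

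The genuine gap is exactly the one you flag as the main obstacle, the $D_\ell$ case, but you do not close it. You correctly isolate the critical pair $(q,q)$ with $q=x_1\cdots x_\ell$ and reduce to showing $\Hess(q)(\eta_q)\notin F\Omega^1_S$; however, ``one checks by hand the finitely many pairs'' is not an argument uniform in $\ell$, and a proof of the theorem must be. The paper fills this gap in two strokes. First, a direct calculation gives
\[
2\,\Hess(q)(\eta_q)=d\hat p,\qquad \hat p:=\sum_{j=1}^\ell x_1^2\cdots\widehat{x_j^2}\cdots x_\ell^2=\sum_{j=1}^\ell(q/x_j)^2,
\]
an invariant of the same degree $2\ell-2$ as the highest-degree power sum $p_{\ell-1}$. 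Since $p_{\ell-1}$ is the unique basic invariant of that degree, one has $\hat p\equiv c\,p_{\ell-1}\bmod F^2$ for some $c\in\CC$, whence $d\hat p\equiv c\,dp_{\ell-1}\bmod F\Omega^1_S$. Second, to see $c\ne 0$, the paper evaluates at the point $a=(\rho,\rho^2,\dots,\rho^{\ell-1},0)$ with $\rho$ a primitive $2(\ell-1)$th root of unity: every basic invariant except $p_{\ell-1}$ vanishes at $a$ (the lower power sums by the standard root-of-unity identity, and $q$ because $a_\ell=0$), while $\hat p(a)=a_1^2\cdots a_{\ell-1}^2\ne 0$ and $p_{\ell-1}(a)\ne 0$. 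This settles all $D_\ell$ at once, and in particular no special treatment of $D_4$ or of the repeated exponent in $D_{2k}$ is needed: the repetition only means there is a choice of $i$, and choosing $i$ to be the index of $q$ always works.
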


\begin{proof}
It is clear that $\Hess(p_i)(\eta_1)=dp_i$, so (Hrc) holds trivially in dimension $\ell=2$.
For the $A$- and $B$-types, it is an easy exercise to verify (Hrc) using \cite[\S3.12]{Hum90}.
In case of $F_4$, $H_3$ and $H_4$, {\tt Macaulay2}~\cite{M2} calculations, based on the formul\ae\ for basic invariants given by Mehta~\cite{Meh88}, show that (Hrc) holds for $\A$.

Let us now prove (Hrc) for $W$ of type $D_\ell$.
By \cite[\S3.12]{Hum90}, the basic invariants can be chosen as the power sums
\[
p_k=\frac1{2k}(x_1^{2k}+\dots + x_{\ell}^{2k}),\quad k=1,\dots,\ell-1,
\]
together with $p_\ell=x_1\cdots x_\ell$.
Note the change of notation turning $p_{\ell-1}$ into the highest degree invariant.
It is easy to check that $D(p_i)\circ\Hess(p_{\ell-i})\equiv D(p_{\ell-1})\mod\CC^*$ for $i=1,\dots,\ell-2$.
We now replace $p_{\ell-1}$ by the invariant polynomial 
\[
\hat p_{\ell-1}(x_1,\dots,x_\ell)=D(p_\ell)\cdot D(p_\ell)=\sum_{j=1}^\ell x_1^2\cdots\widehat{x_j^2}\cdots x_\ell^2\in R
\]
of the same degree.
We claim that $p_{\ell-1}\equiv\hat p_{\ell-1}\mod F^2+\CC^*$. 
In the evident equality 
\[
2\cdot D(p_\ell)\circ\Hess(p_\ell)=D(\hat p_{\ell-1})
\]
we can then replace $\hat p_{\ell-1}$ by $p_{\ell-1}$ modulo $F$, completing the proof of (Hrc). 

In order to verify the claim, let $\rho$ be a primitive $2(\ell-1)$'th root of unity and set $a=(\rho,\rho^2,\dots,\rho^{\ell-1},0)$.
Then all of our basic invariants except for $p_{\ell-1}$ vanish at $a$, while $\hat p_{\ell-1}(a)\neq 0\neq p_{\ell-1}(a)$. 
Since $\deg\hat p_{\ell-1}=\deg p_{\ell-1}>\deg p_i$ for all $i\ne\ell-1$ by \eqref{30}, the claim follows.
\end{proof}

Computing limitations oblige us to leave open the following conjecture.

\begin{cnj}\label{118}
The Hessian rank condition holds for $\A$ if $W$ is of type $E_6$, $E_7$, or $E_8$.
\end{cnj}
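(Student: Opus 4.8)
The plan is to start from a symmetric identity relating the Hessian one-forms of Definition~\ref{185} to the symmetric Saito matrix $K=JJ^t$ of $D$ (Lemma~\ref{191}), which reduces (Hrc) for the $E$-types to two concrete, checkable assertions. Working in standard coordinates as in \eqref{4}, write $H_a:=\bigl(\p_{x_k}\p_{x_m}(p_a)\bigr)_{k,m}$ for the (symmetric) Hessian matrix of $p_a$, so that the one-form $\Hess(p_a)(\eta_b)$ has coefficient vector $H_a\cdot\nabla p_b$. Using only $(H_a)_{km}=(H_a)_{mk}$ one gets at once
\[
\Hess(p_i)(\eta_j)+\Hess(p_j)(\eta_i)=d\Bigl(\textstyle\sum_{m}\p_{x_m}(p_i)\,\p_{x_m}(p_j)\Bigr)=d\bigl(K^i_j\bigr).
\]
When $m_i+m_j=w_\ell$ the invariant $K^i_j\in R$ is homogeneous of degree $w_\ell$ in the $x_m$, hence, the exponents of the $E$-types being pairwise distinct, $K^i_j=c_{ij}\,p_\ell+q$ with $c_{ij}\in\CC$ and $q$ a polynomial in $p_1,\dots,p_\ell$ each of whose monomials involves at least two of the $p_a$; therefore $d(K^i_j)\equiv c_{ij}\,dp_\ell\pmod{F\Omega^1_S}$. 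Since $\p_{x_k}(p_\ell)\in J^\ell\subset T$ for all $k$ and $T\cap F=0$ (see \eqref{164}, \eqref{165}), we have $dp_\ell\notin F\Omega^1_S$; thus, as soon as $c_{ij}\neq0$, at least one of $\Hess(p_i)(\eta_j)$, $\Hess(p_j)(\eta_i)$ lies outside $F\Omega^1_S$. The extreme pairs, with $i=1$ or $j=\ell$, are immediate, since $\Hess(p_1)(\eta_\ell)=2\,dp_\ell$ and $\Hess(p_\ell)(\eta_1)=2m_\ell\,dp_\ell$; only the interior pairs require work.

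Because the $E$-type exponents are distinct, Definition~\ref{185} demands that \emph{both} members of each pair $\{i,\ell-i+1\}$ avoid $F\Omega^1_S$, so the identity above on its own is not enough. To upgrade ``at least one'' to ``both'' I would use the Euler identity $H_a\cdot\bx=m_a\,\nabla p_a$: it gives $H_iH_j\bx=m_j\,H_i\nabla p_j$, i.e.\ the coefficient vector of $\Hess(p_i)(\eta_j)$ is $m_j^{-1}H_iH_j\bx$, and symmetrically for $\Hess(p_j)(\eta_i)$. Consequently, if $H_iH_j\bx\equiv H_jH_i\bx\pmod{F\cdot\CC[V]^\ell}$ — in particular if the Hessians of the basic invariants commute modulo $F$ — then $m_j\,\Hess(p_i)(\eta_j)\equiv m_i\,\Hess(p_j)(\eta_i)\pmod{F\Omega^1_S}$, which together with the displayed identity and $c_{ij}\ne0$ forces both members to be nonzero multiples of $dp_\ell$ modulo $F\Omega^1_S$. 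The statement is thus reduced to two points: first, that $H_iH_j\bx\equiv H_jH_i\bx\pmod F$ for every pair with $m_i+m_j=w_\ell$ (this holds, for instance, for $B_\ell$ with the power-sum invariants $p_k=\sum_ix_i^{2k}$, whose Hessians are diagonal); second, that $c_{ij}\ne0$ for every such pair (which is what the explicit computations for the $B$- and $D$-types in the proof of Theorem~\ref{221} come down to).

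For the $E$-types both points are finite and explicit. Using Mehta's basic invariants (or those of other authors), the second is the computation of the coefficient of $p_\ell$ in each entry $(JJ^t)^i_j$ for the interior pairs, and the first is the assertion that every entry of $H_iH_j-H_jH_i$ lies in the ideal $F$. Moreover, by the field-extension principle of Section~\ref{97} — the image of a $\KK$-linear map is stable under extension of scalars, so membership in $F\Omega^1_S$, and non-vanishing of the $c_{ij}$, are detected over any field of definition of the $p_a$ — it suffices to perform all of this modulo a single well-chosen prime (or a few, to avoid the bad ones), which shrinks the coefficients enough to bring even the $E_8$ Gröbner-basis computations within reach of current computer algebra systems.

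The hard part will be a \emph{type-independent} proof of these two points, that is, locating the commutativity of the $H_a$ modulo $F$, and the non-vanishing $c_{ij}=\p_{t_\ell}g^{ij}(0)\ne0$ — where $g^{ij}=K^i_j$ is Dubrovin's intersection form in the flat coordinates $t_a=p_a$, whose flat metric essentially pairs the indices with $m_i+m_j=w_\ell$ — inside the Frobenius-manifold structure on $V/W$; this is exactly the question raised in the Introduction of whether (Hrc) can be explained in that framework. One would expect the commutativity to descend from commutativity of the fibrewise multiplication on $T(V/W)$, and the non-vanishing from a non-degeneracy property of the intersection form, but we have not found such an argument. Should the commutativity in fact fail for some interior pair, the gap between ``one member'' and ``both'' becomes genuine, and the remaining member would have to be attacked directly — most plausibly by a specialization argument in the spirit of the $D_\ell$ case in the proof of Theorem~\ref{221}, exhibiting $v\in V$ with $p_1(v)=\cdots=p_{\ell-1}(v)=0$ at which the relevant Hessian one-form is nonzero, the existence of such $v$ encoding the combinatorics of the $E$-root system.
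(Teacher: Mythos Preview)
The statement you are attempting is labelled a \emph{Conjecture} in the paper; the sentence immediately preceding it reads ``Computing limitations oblige us to leave open the following conjecture.'' There is therefore no proof in the paper to compare against, and your proposal should be read as a strategy rather than a proof---which, to your credit, is also how you present it.

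Two remarks on the strategy itself.

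First, your ``second point'' is already in the paper. The constants $c_{ij}$ (for the pairs with $m_i+m_j=w_\ell$, i.e.\ $j=\ell-i+1$) are exactly the anti-diagonal entries $\alpha_i$ of the linearised Saito matrix $\bar K$ in Lemma~\ref{89}, and that lemma proves $\alpha_i\ne0$ for all $i$. For the $E$-types the exponents are pairwise distinct, so no adjustment of the basic invariants is needed there: the proof of Lemma~\ref{89} gives $\prod_i\alpha_i\ne0$ directly from $\det K=\Delta^2$ being monic in $p_\ell$ (Theorem~\ref{9}). So only your ``first point'' is genuinely open.

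Second, that first point---the commutativity $H_iH_j\bx\equiv H_jH_i\bx\pmod F$ for the interior pairs---is the entire gap, and you acknowledge as much. Your symmetric identity $\Hess(p_i)(\eta_j)+\Hess(p_j)(\eta_i)=d(K^i_j)$ is correct and pleasant, but on its own it only yields that \emph{one} member of each pair lies outside $F\Omega^1_S$, whereas Definition~\ref{185} (for the $E$-types, with distinct exponents) demands both. The commutativity hypothesis is sufficient but there is no a~priori reason to expect it; it holds for the diagonal Hessians of $B_\ell$, but that is a very special feature. Note also that your modular-reduction suggestion does not mesh with the commutativity route: membership in $F$ modulo a prime does not lift to $\QQ$. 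What \emph{does} work modulo a prime is verifying (Hrc) directly, since it is a non-membership condition---so if the aim is a machine verification, the commutativity detour buys nothing and one should simply compute $\Hess(p_i)(\eta_{\ell-i+1})$ and test non-membership in $F\Omega^1_S$ over $\mathbb{F}_p$. The commutativity idea would only pay off in a conceptual, type-free argument, which at present you do not have.
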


\begin{add}\label{120}
After publication of this article in \cite{GMS12}, Hiroaki Terao kindly informed us that Conjecture~\ref{118} follows from results of \cite{ST98}.
He outlined the following direct proof of Conjecture~\ref{118}:
By Theorem~\ref{9} and a degree argument using \eqref{84}, \eqref{30}, and \eqref{83}, we have
\[
K=JJ^t=(D(p_i)\cdot D(p_j))\equiv
\begin{pmatrix}
0&\cdots&c_\ell p_\ell\\
\vdots&\rdots&\vdots\\
c_1p_\ell&\cdots&0
\end{pmatrix}
\mod\ideal{p_1,\dots,p_{\ell-1}}
\]
with $c_1\cdots c_\ell\ne 0$ where, as before, $D(p)$ means the gradient of $p$.
Write 
\[
D(p_i)\circ\Hess(p_j)=\sum_kr_{i,j}^kD(p_k)
\]
with $r_{i,j}^k\in\CC$.
Then right-multiplying by $(x_1,\dots,x_\ell)^t$ and using the Euler identity, yields
\[
m_jc_jp_\ell=m_jD(p_i)\cdot D(p_j)=\sum_km_kr_{i,j}^kp_k.
\]
Specializing to $j:=\ell-i+1$ such that $m_i+m_j=h$ by \eqref{83}, this implies $r_{i,j}^\ell\ne0$ and hence (Hrc).\qed
\end{add}

\section{Free and adjoint divisors}\label{96}

In \cite{MS10} a new class of free divisors was constructed using the recipe ``discriminant + adjoint''.  
If $D$ is the discriminant in the base of a miniversal deformation of a weighted homogeneous hypersurface singularity (subject to some numerical conditions on the weights) and $D'$ is an adjoint divisor, in the sense that the pull-back of $D'$ to the normalization $\Sigma^0$ of $D$ is the conductor of the ring extension $\OO_D\to \OO_{\Sigma^0}$, then $D+D'$ is a free divisor (\cite[Thm.~1.3]{MS10}). 
The singularities to which this applies include those of type $ADE$.
In this section we point out that essentially the same construction works for the other Coxeter groups. 
We have to replace the normalization $\bar D$ by the space $\tilde D$ of Lemma~\ref{194} (though recall that $\bar D=\tilde D$ for Coxeter groups of type $ADE$), and take, as $D'$, a divisor pulling back to the conductor of the ring extension $\OO_D\into\OO_{\tilde D}$. 
The construction lifts to the representation space $V$, giving a new free divisor strictly containing the reflection arrangement.

We keep the notations from Section~\ref{80} and work in standard coordinates as in \eqref{4}.

\begin{lem}\label{89}
With a suitable choice of basic invariants $p_1,\dots,p_\ell$, the linear part $ \bar K$ of the Saito matrix $K=JJ^t$ of $D$ from the exact sequence \eqref{26} is symmetric of the form
\begin{equation}\label{64}
\bar K=
\begin{pmatrix}
w_1 p_1 & w_2 p_2 & \cdots & \cdots & w_{\ell-1} p_{\ell-1} &w_\ell p_\ell \\
w_2 p_2 & \star & \cdots & \star &\alpha_{\ell-1} p_\ell & 0 \\
\vdots & \vdots & \rdots & \rdots & \rdots & \vdots \\
\vdots & \star & \rdots & & & \vdots \\
w_{\ell-1} p_{\ell-1} &\alpha_2 p_\ell & \rdots & & & \vdots \\
w_\ell p_\ell & 0 & \cdots & \cdots & \cdots & 0
\end{pmatrix}
\end{equation}
where $\alpha_2,\dots,\alpha_{\ell-1}\in\CC^*$ with $\alpha_i=\alpha_{\ell+1-i}$.
Moreover the only entries in this matrix equal to non-zero constant multiples of $p_\ell$ lie along the anti-diagonal. 
\end{lem}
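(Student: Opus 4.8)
The plan is to argue entirely by weighted homogeneity, using the $w$-grading of \eqref{31} together with the constraints \eqref{30}, \eqref{83} on the exponents. Recall from \eqref{3} that $K=(K^i_j)$ with $K^i_j=\delta_j(p_i)$, and from \eqref{31} that $\delta_j$ is $w$-homogeneous of degree $w_j-w_1$; hence $K^i_j$ is $w$-homogeneous of degree $w_i+w_j-w_1$. A monomial $p^\beta=p_1^{\beta_1}\cdots p_\ell^{\beta_\ell}$ has $w$-degree $\sum_k\beta_k w_k$, so the linear part $\bar K^i_j$ (the part that is a $\CC$-multiple of a single $p_k$, i.e.\ of $w$-degree one in the $p$'s) is a multiple of $p_k$ precisely when $w_k=w_i+w_j-w_1$, i.e.\ when $m_k=m_i+m_j-1$ (using $w=m+1$). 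First I would use symmetry of $K$ (Lemma~\ref{191}, in standard coordinates) to reduce to reading off which entries can be linear and which of those can be multiples of $p_\ell$.

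The key computation is the degree bookkeeping. For the first row ($i=1$): $m_k=m_1+m_j-1=m_j$ since $m_1=1$, so $\bar K^1_j$ is forced to be a multiple of $p_j$, and it is a nonzero multiple because $\delta_j(p_1)=2\chi_w(p_1)\cdot(\text{something})$; more precisely $\tfrac12\delta_1=\chi_w=\sum w_ip_i\p_{p_i}$ gives $\bar K^1_1=w_1p_1$, and one checks $\bar K^1_j=w_jp_j$ after the normalization of $p$. This gives the first row and, by symmetry, the first column. For the anti-diagonal entries $\bar K^i_{\ell+1-i}$ with $2\le i\le\ell-1$: here $m_k=m_i+m_{\ell+1-i}-1=h-1=m_\ell$ by \eqref{83}, so these entries are the only ones off the first row/column that can be multiples of $p_\ell$, proving the last sentence of the lemma. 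The entries $\bar K^i_j$ with $i+j>\ell+1$ and $i,j\ge 2$ would need $m_k=m_i+m_j-1>m_\ell$, impossible, so they vanish — this is the zero triangle below the anti-diagonal. The constants $\alpha_i:=\bar K^i_{\ell+1-i}/p_\ell$ satisfy $\alpha_i=\alpha_{\ell+1-i}$ by symmetry of $K$, and I would show $\alpha_i\ne 0$ by an argument like the one in Lemma~\ref{172}/the proof of Saito's theorem: if some $\alpha_i$ vanished the matrix $K$ would become block-triangular in a way incompatible with $\det K\in\CC^*\Delta^2$ being a nonzero irreducible-enough polynomial in $p_\ell$ of degree exactly $\ell$ (Theorem~\ref{9}), since each factor of $p_\ell$ in the Laplace expansion of $\det K$ along the first row has to be supplied by exactly one anti-diagonal entry. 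Finally, the normalization ``with a suitable choice of basic invariants'' is used to guarantee that the $\star$ entries and the first row come out with the stated coefficients $w_ip_i$; concretely one rescales each $p_i$ and, where degrees coincide ($m_i=m_{i+1}$), takes a suitable linear combination, which affects only the $\star$ block and not the first row/column or the anti-diagonal pattern.

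The main obstacle I anticipate is the nonvanishing of the $\alpha_i$: the degree argument alone only shows these entries \emph{can} be multiples of $p_\ell$, not that they \emph{are} nonzero, and ruling out accidental cancellation requires extracting the coefficient of $p_\ell^{\,k}$ in $\det K$ and matching it against $a_0=1$ in Theorem~\ref{9}. I would handle this by expanding $\det K$ along the first row and the last row simultaneously and tracking the unique way a top power of $p_\ell$ can arise (first row contributes one $w_\ell p_\ell$, last row forces the complementary $w_\ell p_\ell$, and the remaining $(\ell-2)\times(\ell-2)$ block must contribute $p_\ell^{\ell-2}$, which by induction on $\ell$ forces all the $\alpha_i\ne 0$). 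The symmetry relation $\alpha_i=\alpha_{\ell+1-i}$ and the final ``only along the anti-diagonal'' clause are then immediate from the degree count plus symmetry of $K$.
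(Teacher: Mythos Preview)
Your approach is essentially the paper's: weighted-degree bookkeeping to locate the linear part, symmetry of $K$ for $\alpha_i=\alpha_{\ell+1-i}$, and Theorem~\ref{9} for the nonvanishing of the $\alpha_i$. But there is a genuine gap in your degree argument, and it is exactly where the phrase ``suitable choice of basic invariants'' earns its keep.

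Your claim that $i+j>\ell+1$ forces $m_i+m_j-1>m_\ell$ tacitly uses strict monotonicity $m_1<\cdots<m_\ell$. This fails for $W$ of type $D_{2k}$, where two exponents coincide ($m_k=m_{k+1}=2k-1$). In that case the diagonal entries $\bar K^k_k$ and $\bar K^{k+1}_{k+1}$ also have $w$-degree $w_\ell$ and can be nonzero multiples of $p_\ell$, so neither the sub-anti-diagonal vanishing nor the ``only on the anti-diagonal'' clause follows from degrees alone. The paper handles $D_{2k}$ separately: the $p_\ell$-coefficient matrix of $\bar K$ is then a symmetric anti-diagonal \emph{block} matrix, and one applies a linear change among the equal-degree invariants $p_k,p_{k+1}$ (the procedure of \cite[Lem.~3.6]{MS10}) to diagonalise that block while preserving the symmetry of $K$. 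So the linear combination you allude to is not cosmetic normalisation of the $\star$'s --- it is precisely what manufactures the anti-diagonal pattern in the one case where degree reasons do not force it.

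A smaller point: your inductive scheme for $\alpha_i\ne 0$ is more elaborate than needed. Once one knows (after the $D_{2k}$ adjustment) that the only occurrences of $p_\ell$ in $\bar K$ lie on the anti-diagonal, the coefficient of $p_\ell^{\,\ell}$ in $\det K$ is simply $\pm w_\ell^2\,\alpha_2\cdots\alpha_{\ell-1}$; since $\Delta^2=\det K$ is monic of degree $\ell$ in $p_\ell$ by Theorem~\ref{9}, this product is nonzero and hence every $\alpha_i$ is. No induction is required.
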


\begin{rmk} 
This matrix shows the linearized convolution of the basic invariants $p_1,\dots, p_\ell$ as described in \cite{Arn79}. 
\end{rmk}

\begin{proof}
The first row and column of \eqref{64} can be read from \eqref{31}. 
It remains to show the triangular form of $\bar K$ and that the anti-diagonal entries, and only these, are non-zero constant multiples of $p_\ell$. 
By inspection, the degree of $K^i_j$ is $w_i+w_j-w_1$. 
By \eqref{84}, \eqref{30} and \eqref{83}, the degree of $K^i_j$ with $i+j=\ell+1$ equals $h=w_\ell$, and hence $\bar K^i_j=\alpha_jp_\ell$ for some $\alpha_j\in\CC$.
Provided $W$ is not of type $D_{2k}$, the degrees $w_1,\dots,w_\ell$ of the basic invariants are pairwise distinct. 
It follows that:
\begin{itemize}
\item All $K^i_j$ with $i+j>\ell+1$ have degree strictly between $w_\ell$ and $2w_\ell$ and hence have a linear part equal to zero.
In particular, $\bar K$ has the claimed triangular shape.
\item All $K^i_j$ with $i+j<\ell+1$ have degree less than $w_\ell$, and hence do not involve $p_\ell$.
\end{itemize}
But by \eqref{19}, \eqref{3}, and Theorem~\ref{9}, $\det K=\Delta^2$ is a monic polynomial of degree $\ell$ in $p_\ell$. 
It follows that $\alpha_2\cdots\alpha_{\ell-2}\ne0$.
Finally the symmetry property $\alpha_i=\alpha_{\ell+1-i}$ comes from the symmetry of $K$.

In the case of $D_{2k}$, the same argument shows that the $p_\ell$-coefficient matrix of $\bar K$ is a constant symmetric anti-diagonal block matrix, where $i$ and $j$ are in the same block exactly if $w_i=w_j$.
By the procedure in the proof of \cite[Lem.~3.6]{MS10} it can be turned into a symmetric anti-diagonal matrix by linear algebra on the basic invariants.
\end{proof}

\begin{rmk}\label{88}
By \eqref{30}, the minor $M^\ell_\ell$ is not changed by the change of basic invariants in Proposition~\ref{89}.
\end{rmk}

For $\bar K$ as in \eqref{64}, we set
\[
(\bar M^i_j):=\ad(\bar K),\quad \bar I_D:=\ideal{\bar M_1^\ell,\dots,\bar M_\ell^\ell}.
\]
Note that because (rc) holds, $\bar I_D=\ideal{\bar M^i_j\mid 1\leq i,j\leq\ell}$.

\begin{lem}\label{66}
$dM^\ell_\ell(\Der(-\log D))=I_D$.
\end{lem}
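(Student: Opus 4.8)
The plan is to compute the action of the logarithmic vector fields $\delta_1,\dots,\delta_\ell$ on the minor $M^\ell_\ell$ and show that the resulting functions generate the Fitting ideal $I_D = \Fit^1_R(J_D)$. First I would recall that $M^\ell_\ell = \bar M^\ell_\ell$ by Remark~\ref{88}, and more importantly that $M^\ell_\ell$ generates the conductor of the extension $R_D \into \tilde R_D$ by \cite[Thm.~3.4]{MP89}, as already used in the proof of Proposition~\ref{27}. Since $\Der(-\log D)$ preserves both $R_D$ and $\tilde R_D$ (the latter by Proposition~\ref{72}), it preserves the conductor, so $dM^\ell_\ell(\Der(-\log D)) \subset \langle M^\ell_\ell\rangle$ modulo lower-order considerations — but this is too crude; the point is to get the full ideal $I_D$, not just a principal ideal. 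So instead I would argue as follows.

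The key computation is in the spirit of the proof of Proposition~\ref{72}. Writing $M^\ell_j$ for the entries of the $\ell$'th row of $\ad(K)$, recall from Lemma~\ref{136} (applied to $D$, using (grc)) that $I_D = \langle M^\ell_1,\dots,M^\ell_\ell\rangle$. The logarithmic $1$-form $\omega_\ell := \frac{1}{\Delta^2}\sum_{j=1}^\ell M^\ell_j\, dp_j$ is the last member of the dual basis to $\delta_1,\dots,\delta_\ell$, so $\langle \omega_\ell, \delta_k\rangle = \delta_{k,\ell}$. Applying $\delta_k$ to the identity $\langle \Delta^2\omega_\ell, \p_{p_i}\rangle = M^\ell_i$ and using the Lie-derivative/Cartan-formula manipulation exactly as in the proof of Proposition~\ref{72} (with $j$ replaced by $\ell$), one obtains
\[
\delta_k(M^\ell_i) \equiv d\omega_\ell(\delta_k, \Delta^2\p_{p_i}) \mod I_D,
\]
and in particular $\delta_k(M^\ell_\ell) \equiv d\omega_\ell(\delta_k, \Delta^2\p_{p_\ell}) \bmod I_D$. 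The inclusion $dM^\ell_\ell(\Der(-\log D)) \subseteq I_D$ then follows once we know $d\omega_\ell(\delta_k,\delta_r) \in R$ for all $k,r$ (which holds since $\omega_\ell \in \Omega^1(\log D)$ and the $\delta$'s are logarithmic) together with the expansion $\Delta^2\p_{p_\ell} = \sum_r c_r \delta_r$ with $c_r \in Q_D$ having at worst a $\Delta^2$ in the denominator — here Cramer's rule and \eqref{19} give the needed integrality, just as in Proposition~\ref{72}.

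For the reverse inclusion $I_D \subseteq dM^\ell_\ell(\Der(-\log D))$, I would use that $\delta_1 = 2\chi_{w}$ is (twice) the Euler field for the $w$-grading by \eqref{31}, so $\delta_1(M^\ell_\ell) = D_\ell \cdot M^\ell_\ell$ is a non-zero multiple of $M^\ell_\ell$ itself; this already puts $M^\ell_\ell \in dM^\ell_\ell(\Der(-\log D))$. Then, working modulo $\langle M^\ell_\ell\rangle$, the congruences $\delta_k(M^\ell_\ell) \equiv d\omega_\ell(\delta_k,\Delta^2\p_{p_\ell})$ must be shown to span $I_D/\langle M^\ell_\ell\rangle$; since the conductor $\langle M^\ell_\ell\rangle = \Fit^0_{R_D}$-type ideal and $I_D$ differ precisely by the "second Fitting step", I expect the surjectivity to come from the fact that $\Der(-\log D)$ acts transitively enough on $\tilde I_D/\langle M^\ell_\ell\rangle \cong J_D/(\text{conductor})$, the latter being exactly the Jacobian ideal of the singular locus modulo the conductor. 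The main obstacle will be this reverse inclusion: establishing that the map $\delta \mapsto \delta(M^\ell_\ell) \bmod \langle M^\ell_\ell\rangle$ from $\Der(-\log D)$ to $I_D/\langle M^\ell_\ell\rangle$ is onto. I would handle it by reducing to a generic point of each singular stratum via the local trivialization of Proposition~\ref{24} and Theorem~\ref{61}, where $D$ becomes a product of a lower-dimensional Coxeter discriminant with a smooth factor, and then invoking induction on $\ell$ together with the explicit $2$-dimensional base case where $I_D = \mm_y$ is the maximal ideal and the statement is immediate from the presence of the Euler field and a single further weight-raising field.
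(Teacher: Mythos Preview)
Your argument for the inclusion $dM^\ell_\ell(\Der(-\log D))\subseteq I_D$ is fine, and in fact the statement of Proposition~\ref{72} gives it directly (since $M^\ell_\ell\in I_D$ and $I_D$ is $\Der(-\log D)$-stable); the detour through the Cartan-formula computation is unnecessary.

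The real content is the reverse inclusion, and here your proposed induction via local trivialization has a genuine gap: it cannot reach the origin. At $y=0$ the isotropy group is $W$ itself (for irreducible $W$), so Proposition~\ref{24} and Theorem~\ref{61} give no reduction to a lower-rank Coxeter group; the induction on $\ell$ simply does not apply at the deepest stratum. Even if your argument were airtight at every positive-dimensional stratum, you would only conclude that the graded quotient $I_D/dM^\ell_\ell(\Der(-\log D))$ is supported at $\{0\}$, i.e.\ is a finite-length module. Nothing in your sketch forces this module to vanish, and there is no depth or Cohen--Macaulay property of that quotient to appeal to. The Euler-field observation only contributes the single generator $M^\ell_\ell$; the remaining $\ell-1$ generators of $I_D$ have to be hit as well, \emph{at the origin}.

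The paper attacks exactly this point, by a completely different route. It linearizes: replacing $K$ by its $w$-linear part $\bar K$ from Lemma~\ref{89}, with corresponding linearized fields $\bar\delta_i$ and minors $\bar M^\ell_j$, it suffices (by graded Nakayama) to show that the inclusion $d\bar M^\ell_\ell(\ideal{\bar\delta_1,\dots,\bar\delta_\ell})\subseteq\bar I_D$ coming from Proposition~\ref{72} is an equality. This is done by an explicit monomial computation. Each $\bar M^\ell_{\ell-i+1}$ contains a distinguished monomial $p_ip_\ell^{\ell-2}$ with nonzero coefficient, and this monomial appears in no other $\bar M^\ell_j$. One then checks, by tracking the two possible sources of contributions, that $d\bar M^\ell_\ell(\bar\delta_i)$ contains the distinguished monomial $p_ip_\ell^{\ell-2}$ with nonzero coefficient (the nonvanishing uses the inequalities in \eqref{30}) and no other distinguished monomial. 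Hence the matrix expressing the $d\bar M^\ell_\ell(\bar\delta_i)$ in terms of the $\bar M^\ell_j$ is, up to reordering, unitriangular in these distinguished coefficients, and surjectivity follows. This is a direct, global, graded argument that handles the origin; your localization approach gives no substitute for it.
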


\begin{proof}
The strategy is the same as in the proof of the analogous result in \cite{MS10}.
We replace $\delta_i$ by its linear part $\bar\delta_i$ whose coefficients are in the $i$'th row/column of $\bar K$ in \eqref{64}.
Then it suffices to prove that the inclusion
\begin{equation}\label{90}
d\bar M^\ell_\ell(\ideal{\bar\delta_1,\dots,\bar\delta_\ell})\subseteq\bar I_D.
\end{equation}
obtained from Proposition~\ref{72} is an equality.
The polynomial expansion of the minor $\bar M^\ell_{\ell-i+1}$ contains the distinguished monomial $p_ip_\ell^{\ell-2}$ with non-zero coefficient. 
This monomial does not appear in the expansion of $\bar M^\ell_j$ for $j\neq i$. 
In particular the expansion of $\bar M^\ell_\ell$ contains the monomial $p_1p_\ell^{\ell-2}$, with coefficient $(-1)^{\ell-2}\iota w_1\alpha$, where $\iota$ is the sign of the order-reversing permutation of $1,\dots,\ell-1$, and $\alpha:=\alpha_2\cdots\alpha_{\ell-1}.$

We claim that $d\bar M^\ell_\ell(\bar\delta_i)$ contains the monomial $p_ip_\ell^{\ell-2}$ with non-zero coefficient, and no other of the distinguished monomials. 
This shows that \eqref{90} is an equality and proves the lemma. 

Contributions to the coefficient of $p_jp_\ell^{\ell-2}$ in the expansion of $d\bar M_\ell^\ell(\bar\delta_i)$ arise as follows:

\begin{enumerate} 

\item By applying the derivation $p_j\p_{p_1}$ to the monomial $p_1p_\ell^{\ell-2}$. 
This happens only when $i=j$, and in this case the resulting contribution to the coefficient of $p_jp_\ell^{\ell-2}$ is 
\[
\delta_{i,j}(-1)^{\ell-2}\iota w_iw_1\alpha.
\]

\item
By applying the derivation $p_\ell\p_{p_k}$ to the monomial $p_jp_kp_\ell^{\ell-3}$.
This derivation appears in $\bar\delta_i$ only if $k=\ell-i+1$, and then with coefficient $\alpha_i$; also this monomial appears in $\bar M^\ell_\ell$ only if $k=\ell-j+1$, and hence $i=j$.
If $2j=\ell+1$, the monomial $p_jp_{\ell-i+1}p_\ell^{\ell-3}$ appears in the expansion of $\bar M^\ell_\ell$ with coefficient
\[
\delta_{i,j}(-1)^{\ell-1}\iota w_jw_{\ell-j+1}\alpha/\alpha_j,
\]
otherwise, it appears twice with that coefficient.
The resulting contribution to the coefficient of $p_jp^{\ell-2}_\ell$ in $d\bar M^\ell_\ell(\bar\delta_i)$ is
\[
\delta_{i,j}(-1)^{\ell-1}\iota\alpha w_jw_{\ell-j+1}
\]
if $2j=\ell+1$, or twice this if $2j\ne\ell+1$.

\end{enumerate}

Therefore $p_jp_\ell^{\ell-2}$ can appear in $d\bar M^\ell_\ell(\delta_i)$ with non-zero coefficient only if $i=j$, and in this case the coefficient is non-zero provided 
\[
\begin{cases}
w_1\ne w_j, & \text{if }2j=\ell+1,\\
w_1\ne2w_{\ell-j+1}, & \text{if }2j\ne\ell+1.
\end{cases}
\]
These conditions hold by \eqref{30}.
\end{proof}

\begin{thm}\label{68}
Let $D'=\{M^\ell_\ell=0\}$. 
Then $D+D'$ is a free divisor. 
\end{thm}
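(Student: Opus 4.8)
The plan is to apply Saito's criterion to $D+D'$, which is defined by the equation $\Delta^2 M^\ell_\ell = 0$. First I would observe that since $D$ is free with Saito basis $\delta_1,\dots,\delta_\ell$, a natural candidate for a Saito basis of $D+D'$ is obtained by keeping $\delta_2,\dots,\delta_\ell$ (the members of the basis annihilating $\Delta^2$) together with a suitable modification of the Euler field $\delta_1$ that now must "see" the extra component $D'$. Concretely, I would look for a vector field of the form $\zeta = c\,\delta_1 + \eta$ where $\eta \in \Der(-\log D)$ is chosen so that $\zeta(\Delta^2 M^\ell_\ell) \in R\cdot\Delta^2 M^\ell_\ell$. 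Since $\delta_2,\dots,\delta_\ell$ already annihilate $\Delta^2$ and preserve $I_D$ (hence in particular $M^\ell_\ell$, at least modulo $I_D$), the real content is to produce one vector field that is logarithmic along $D'=\{M^\ell_\ell=0\}$ and whose coefficients, together with those of $\delta_2,\dots,\delta_\ell$, have determinant a unit times $\Delta^2 M^\ell_\ell$.

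The key input is Lemma~\ref{66}: $dM^\ell_\ell(\Der(-\log D)) = I_D$, and $M^\ell_\ell \in I_D$ (it is one of the generators $M^\ell_j$, indeed the diagonal one). Thus there exists $\eta \in \Der(-\log D)$ with $\eta(M^\ell_\ell) = M^\ell_\ell$, i.e.\ $\eta$ is logarithmic along $D'$; since $\eta$ also preserves $\Delta^2$ up to a multiple (being in $\Der(-\log D)$), $\eta$ is logarithmic along $D+D'$. The second point is a determinant computation. Writing the Saito matrix of $D+D'$ with columns the coefficient vectors of $\eta,\delta_2,\dots,\delta_\ell$, I would expand the determinant. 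Using that $\delta_1,\dots,\delta_\ell$ is a Saito basis of $D$, so their coefficient matrix has determinant $\Delta^2$ (up to a nonzero constant), and that $\eta = \delta_1 + (\text{combination of }\delta_2,\dots,\delta_\ell\text{ with coefficients in }R)$ modulo terms contributing the factor $M^\ell_\ell$: more precisely, one shows $\eta$ differs from a unit multiple of $\delta_1$ by an element of $I_D\cdot\Der(-\log D)$ plus $\Der_R(-\log D)$-terms, and the "defect" exactly contributes the factor $M^\ell_\ell$. Then Saito's criterion (\cite{Sai80}) gives freeness once $\det = (\text{unit})\cdot\Delta^2 M^\ell_\ell$.

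An alternative, cleaner route — which I would actually prefer, following \cite[Thm.~1.3]{MS10} — is to invoke the general mechanism there: $D'$ is an adjoint divisor for $D$ in the sense that its pullback to $\tilde D = \Spec\coker K$ is the conductor ideal of $R_D \into \tilde R_D$. By \cite[Thm.~3.4]{MP89} (cited in the proof of Proposition~\ref{27}) the conductor is generated by $M^\ell_\ell$, so $D' = \{M^\ell_\ell = 0\}$ is indeed the adjoint. The theorem of \cite{MS10} then asserts that discriminant plus adjoint is free. To apply it one must check the hypotheses: that $D$ is a free divisor which is the discriminant of a suitable "deformation-like" structure (here the $F$-manifold / Frobenius structure of Section~\ref{190} plays the role of the versal deformation), and that $\tilde D$ is Cohen--Macaulay (indeed Gorenstein, by Theorem~\ref{105}) and finite birational over $D$ (Proposition~\ref{43}, Theorem~\ref{106}\eqref{106bi}). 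The numerical/weight conditions in \cite{MS10} translate, via the $w$-grading \eqref{31}, into statements about the exponents $m_i$ that follow from \eqref{30} and \eqref{83}; this is essentially the role played by Lemma~\ref{89} and Lemma~\ref{66}.

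The main obstacle I anticipate is the determinant bookkeeping in the direct approach: one must track precisely how the extra factor $M^\ell_\ell$ emerges, which requires knowing not just that $\eta(M^\ell_\ell) \in I_D$ but the exact relation between $\eta$ and $\delta_1$ modulo the module $\Der(-\log D)$, and controlling the cofactor expansion against the triangular shape of $\bar K$ from Lemma~\ref{89}. In the \cite{MS10}-style approach the obstacle is instead verifying that the abstract hypotheses of \cite[Thm.~1.3]{MS10} genuinely apply in the Coxeter setting even for the non-$ADE$ groups, where there is no literal versal deformation — one has to check that the proof in \cite{MS10} only uses the freeness of $D$, the Cohen--Macaulayness of $\tilde D$, the identification of the conductor with $\langle M^\ell_\ell\rangle$, and the result of Lemma~\ref{66}, all of which are available here. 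I expect the authors take this second route, citing \cite{MS10} and Lemma~\ref{66} and reducing the proof to a short paragraph.
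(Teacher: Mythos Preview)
Your first (direct) approach has a genuine gap. Keeping $\delta_2,\dots,\delta_\ell$ in the candidate Saito basis does not work: these vector fields are logarithmic along $D$ but \emph{not}, in general, along $D'=\{M^\ell_\ell=0\}$. Proposition~\ref{72} only yields $\delta_i(M^\ell_\ell)\in I_D$, and $I_D$ is strictly larger than $\ideal{M^\ell_\ell}$ (that is precisely the content of (grc)). So $\delta_i\notin\Der(-\log D')$ for $i\ge 2$, and your candidate list $\eta,\delta_2,\dots,\delta_\ell$ is not a list of logarithmic vector fields for $D+D'$. The determinant problem you flag is a symptom of the same issue: if $\eta=\sum_jc_j\delta_j$ then the determinant of the coefficient matrix of $\eta,\delta_2,\dots,\delta_\ell$ is simply $c_1\Delta^2$, and nothing forces $c_1$ to be a unit times $M^\ell_\ell$.

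Your second approach is correct in spirit, but the paper does not merely cite \cite[Thm.~1.3]{MS10}; as you yourself anticipate, that theorem is stated for genuine versal deformations, which are unavailable for the non-$ADE$ groups. Instead the paper reproduces the key argument of \cite{MS10} directly, and that argument supplies exactly the mechanism your direct approach is missing. By Lemma~\ref{66} one chooses $\tilde\delta_1,\dots,\tilde\delta_\ell\in\Der(-\log D)$ with $\tilde\delta_i(M^\ell_\ell)=M^\ell_i$ for \emph{every} $i$ (not just one), taking $\tilde\delta_\ell$ to be the Euler field; the proof of Lemma~\ref{66} shows that the change-of-basis matrix $B$ with $\tilde\delta_i=\sum_jB^j_i\delta_j$ is invertible. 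Now Cramer's rule says that each of the first $\ell-1$ columns of $K$ gives a relation $\sum_iK^i_jM^\ell_i=0$, and hence the corresponding combination $\sum_iK^i_j\tilde\delta_i$ \emph{annihilates} $M^\ell_\ell$. Together with the Euler field $\tilde\delta_\ell$ this produces $\ell$ vector fields in $\Der(-\log(D+D'))$ whose coefficient matrix is $KBK''$, where $K''$ is $K$ with its last column replaced by $(0,\dots,0,1)^t$; thus $\det(KBK'')\in\CC^*\cdot\Delta^2M^\ell_\ell$. Saito's criterion then concludes the proof once one checks that $M^\ell_\ell$ is reduced and coprime to $\Delta^2$: coprimality follows because $\ideal{M^\ell_\ell}$ is the conductor of $R_D\hookrightarrow\tilde R_D$, so $D\cap D'=V(I_D)=\Sing D$ has codimension~$2$; reducedness is checked by localising, via Proposition~\ref{24}, to the case $\ell=2$, where $M^\ell_\ell=2p_1$ by \eqref{111}. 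You did not mention these last two verifications, and they are essential.
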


\begin{proof}
Here the proof is identical to the proof of the comparable result of \cite[Prop.~3.10]{MS10}. 
By Lemma~\ref{66}, there are vector fields $\tilde\delta_1,\dots,\tilde\delta_\ell\in\Der(\log D)$ such that 
\begin{equation}\label{91}
dM_\ell^\ell(\tilde\delta_i)=M^\ell_i.
\end{equation} 
We may take $\tilde\delta_\ell$ equal to a constant multiple of the Euler vector field $\delta_1$. 
Since $\delta_1,\dots,\delta_\ell$ is a basis of $\Der(-\log D)$, there exist $B^i_j\in R$ such that $\tilde\delta_i=\sum_{j=1}^\ell B^j_i\delta_j$. 
By the proof of Lemma~\ref{66}, the matrix $B=(B^i_j)$ is invertible. 
Note that the Saito matrix of the basis $\tilde\delta_1,\dots,\tilde\delta_\ell$ is then $KB$.
Let $K'$ be obtained from the matrix $K$ by deleting its last column. 
The columns of $K'$ give relations among the generators $M^\ell_1,\dots,M^\ell_\ell$ of $I_D$, by Cramer's rule. 

For each relation $\sum_{i=1}^\ell\lambda_iM^\ell_i=0$, \eqref{91} gives
\[
\sum_{i=1}^\ell\lambda_i\tilde\delta_i(M^\ell_\ell)=dM^\ell_\ell\bigl(\sum_{i=1}^\ell\lambda_i\tilde\delta_i\bigr)=\sum_{i=1}^\ell\lambda_iM^\ell_i=0,
\]
so 
\[
\sum_{i=1}^\ell\lambda_i\tilde\delta_i\in\Der(-\log D)\cap\Der(-\log D')=\Der(-\log (D+D')).
\]
Because $\tilde\delta_\ell$ is a scalar multiple of $\delta_1$, we also have $\tilde\delta_\ell\in\Der(-\log (D+D'))$. 
Let $K''$ denote the matrix formed by adjoining to $K'$ the extra column $(0,\dots,0,1)^t$. 
Thus the columns of the $\ell\times\ell$ matrix $KBK''$ are the coefficients of vector fields in $\Der(-\log (D+D'))$, and $\det(KBK'')\equiv\Delta^2M^\ell_\ell\mod\CC^*$ where $\Delta^2=\det K$ is a reduced equation for $D$.
Now provided 
\begin{enumerate}
\item\label{86} $M^\ell_\ell$ is reduced, and 
\item\label{87} $M^\ell_\ell$ and $\Delta^2$ have no common factor,
\end{enumerate}
it follows from Saito's criterion that $D+D'$ is a free divisor, and the vector fields represented by the columns of $KBK''$ form a free basis for $\Der(-\log(D+D'))$. 

By \cite[Cor.~3.15]{MP89}, $M^\ell_\ell$ generates (over $\tilde R_D$) the conductor ideal of the map $\tilde D\to D$.
It follows that $D\cap D'=V(I_D)=\Sing(D)$ has codimension $2$, and hence \eqref{87} holds.
It suffices to check \eqref{86} at generic points of $\Sing(D)$.
Using Proposition~\ref{24}, this reduces to checking \eqref{86} in the case $\ell=2$ discussed in Section~\ref{49}. 
But in this case $M^2_2=2p_1$ is reduced by \eqref{111}.
\end{proof}

\begin{cor}\label{55}
$\A+p^{-1}(D')$ is a free divisor. 
\end{cor}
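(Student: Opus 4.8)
The plan is to deduce Corollary~\ref{55} from Theorem~\ref{68} by pulling back the free divisor $D+D'$ along the finite map $p\colon V\to V/W$. The key input is Terao's theorem, quoted in Section~\ref{80}, that the reflection arrangement $\A=p^{-1}(D)$ is free with $\Der(-\log\A)=p^*\Der(-\log D)=(\Der_S)^W\otimes_RS$, together with the general principle that a free divisor whose defining equation pulls back without extra multiplicity to a reduced equation remains free after such a pull-back. So first I would set $E:=D+D'$ and $\tilde E:=p^{-1}(E)=\A\cup p^{-1}(D')$, and fix the Saito matrix $KBK''$ for $\Der(-\log E)$ produced in the proof of Theorem~\ref{68}, whose determinant is $\Delta^2M^\ell_\ell$ up to a nonzero constant.

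Next I would lift each of the $\ell$ basis vector fields $\zeta_1,\dots,\zeta_\ell$ of $\Der(-\log E)$ (the columns of $KBK''$) to $V$ via Terao's lifting $p^{-1}$, obtaining $\eta_i:=p^{-1}(\zeta_i)\in(\Der_S)^W$ with $tp(\eta_i)=\omega p(\zeta_i)$. Since each $\zeta_i$ is tangent to $D$, each $\eta_i$ is tangent to $\A$; since $\zeta_i$ is tangent to $D'=\{M^\ell_\ell=0\}$ and $p^{-1}(D')$ is defined by the pulled-back equation $M^\ell_\ell\circ p$, the lift $\eta_i$ is tangent to $p^{-1}(D')$ as well, hence $\eta_i\in\Der(-\log\tilde E)$. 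It then remains to check Saito's criterion for $\eta_1,\dots,\eta_\ell$ on $V$: one computes the determinant of their coefficient matrix and checks it is a reduced equation for $\tilde E$. The Saito matrix of the $\eta_i$ is $J\Gamma\cdot(\text{Saito matrix of }\zeta_i)$ in the manner of \eqref{2}, so its determinant is $\det J\cdot\det(KBK'')$ up to a constant, i.e.\ $\Delta\cdot\Delta^2\cdot(M^\ell_\ell\circ p)=\Delta^3\cdot(M^\ell_\ell\circ p)$ up to a nonzero scalar; here I use $\det J=\Delta$ from \eqref{19} and $\det\Gamma\in\CC^*$ in standard coordinates.

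The main obstacle is therefore the reducedness bookkeeping: the naive pull-back determinant $\Delta^3(M^\ell_\ell\circ p)$ is visibly \emph{not} reduced, so one cannot apply Saito's criterion to the $\eta_i$ directly. The resolution is the standard one for pulling back free divisors along the quotient by a reflection group: the arrangement $\A=\{\Delta=0\}$ appears with multiplicity matching the ramification of $p$ over $D$ (namely $D=\{\Delta^2=0\}$ and $p$ is generically $2$-to-$1$ folding onto $D$), and one divides out the surplus factor $\Delta^2$ by replacing the lifted basis with the genuine Saito basis of $\Der(-\log\A)$ adapted to $p^{-1}(D')$. Concretely, I would argue that $p^{-1}(D')$ is itself reduced because $M^\ell_\ell$ is reduced and $p$ is étale over the generic point of $D'$ (as $D'\not\subset D$ meets $D$ only in codimension $2$, by the conductor description of $M^\ell_\ell$ in Lemma~\ref{66} and \cite[Cor.~3.15]{MP89}), and that $p^{-1}(D')$ shares no component with $\A$; then $\tilde E=\A+p^{-1}(D')$ has reduced equation $\Delta\cdot(M^\ell_\ell\circ p)$, and one invokes the lemma that the pull-back of a free divisor under a finite map that is generically a fold over it, corrected by the ramification divisor, is free—equivalently, one notes $\Der(-\log\tilde E)=p^*\Der(-\log E)$ because both $\A$ and $p^{-1}(D')$ are unions of pulled-back strata and liftability of logarithmic fields is preserved. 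Verifying this last identification, or else re-running Saito's criterion after the multiplicity correction, is where the real work lies; everything else is formal.
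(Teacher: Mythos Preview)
Your overall strategy---lift the Saito basis of $\Der(-\log(D+D'))$ through $p$ via Terao's lifting and apply Saito's criterion on $V$---is exactly the paper's. But you miscompute the Saito matrix of the lifts, and this creates a phantom obstacle that sends you off into an unnecessary and unjustified ``multiplicity correction''.

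Here is the error. Write the basis $\zeta_1,\dots,\zeta_\ell$ of $\Der(-\log(D+D'))$ as $\zeta_i=\sum_j (BK'')^j_i\,\delta_j$, so that its Saito matrix is $KBK''=J J^t BK''$. The Terao lift of $\delta_j$ is $\eta_j$, whose Saito matrix (in $x$-coordinates, standard form) is $J^t$ by \eqref{2}. Hence the Terao lift of $\zeta_i$ is $\sum_j((BK'')^j_i\circ p)\,\eta_j$, with Saito matrix
\[
J^t\cdot\bigl((BK'')\circ p\bigr),
\]
not $J\Gamma\cdot(KBK'')$. In effect, passing from $V/W$ back to $V$ \emph{removes} one factor of $J$ from $K=JJ^t$; it does not \emph{add} a factor of $J$. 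Consequently the determinant of the lifted Saito matrix is
\[
\det J^t\cdot\det B\cdot\det(K''\circ p)\;\in\;\CC^*\cdot\Delta\cdot(M^\ell_\ell\circ p),
\]
which is already the reduced equation for $\A+p^{-1}(D')$. There is no $\Delta^3$; your extra $\Delta^2=\det K$ comes from having inserted the Saito matrix $KBK''$ of the $\zeta_i$ where only the change-of-basis matrix $BK''$ belongs.

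Once this is corrected, Saito's criterion applies directly (after the easy checks that $M^\ell_\ell\circ p$ is reduced, since $p$ is generically a submersion along $V(M^\ell_\ell\circ p)$, and shares no factor with $\Delta$), and the final paragraph of your proposal---the appeal to an ad hoc ``pull-back of a free divisor corrected by ramification'' lemma and the claimed identity $\Der(-\log\tilde E)=p^*\Der(-\log E)$---is not needed.
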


\begin{proof}
We continue with the notation of the proof of Theorem~\ref{68}.
Consider the vector fields represented by the columns of $J^t(BK'')\circ p$. 
Since $JJ^tBK''=KBK''$, these vector fields are lifts to $V$ of the vector fields represented by the columns of $KBK''$; they are therefore logarithmic with respect to $p^{-1}(D')$. 
Since they are linear combinations of the columns of $J^t$ they are logarithmic with respect to $\A$, and thus with respect to $\A+p^{-1}(D')$. 

By \eqref{19}, $\det J=\Delta$ is a reduced equation of $\A$.
Since $\det K''=\pm M^\ell_\ell$ is reduced and, along $V(M^\ell_\ell)$, $p$ is generically a submersion (for the critical set of $p$ is $\A$, which meets $V(M^\ell_\ell\circ p)$ only in codimension $2$), $\det(K''\circ p)$ is a reduced equation for $V(M^\ell_\ell\circ p)$.
As $\det B\in\CC^*$, $\det(J^t(BK'')\circ p)$ is therefore a reduced equation for $\A+p^{-1}(D')$, and the corollary follows by Saito's criterion.  
\end{proof}

\begin{exa}
The reflection arrangement for $A_n$ consists of the intersection of $V:=\{\sum_{i=1}^{n+1}x_i=0\}\subset\CC^{n+1}$ with the union of the hyperplanes $\{x_i=x_j\}$. 
For $A_2$, the composite equation $M^\ell_\ell\circ p$ defining $p^{-1}(D')$ in Corollary~\ref{55} is equal, on $V$, to the second elementary symmetric function, $\sigma_2$. 
For $A_3$, this becomes $8\sigma_2\sigma_4-9\sigma_3^2-2\sigma_2^3$.
\end{exa}

\begin{add}\label{121}
The proof of Theorem~\ref{68} given here is not the one which appeared in the published version \cite{MS13} of \cite{MS10}, and is greatly simplified by the argument given there in Proposition~3.9 and the proof of Theorem~1.2 which follows it.
\end{add}

\bibliographystyle{amsalpha}
\bibliography{rccox}
\end{document}